\def\@tocline#1#2#3#4#5#6#7{\relax
  \ifnum #1>\c@tocdepth 
  \else
    \par \addpenalty\@secpenalty\addvspace{#2}%
    \begingroup \hyphenpenalty\@M
    \@ifempty{#4}{%
      \@tempdima\csname r@tocindent\number#1\endcsname\relax
    }{%
      \@tempdima#4\relax
    }%
    \parindent\z@ \leftskip#3\relax \advance\leftskip\@tempdima\relax
    \rightskip\@pnumwidth plus4em \parfillskip-\@pnumwidth
    #5\leavevmode\hskip-\@tempdima
      \ifcase #1
      \or\or \hskip 2em \or \hskip 2em \else \hskip 3em \fi%
      #6\nobreak\relax
    \dotfill\hbox to\@pnumwidth{\@tocpagenum{#7}}\par
    \nobreak
    \endgroup
  \fi}
\newcommand{\A}{\mathbf{A}}
\newcommand{\G}{\mathbf{G}}
\renewcommand{\P}{\mathbf{P}}
\newcommand{\Z}{\mathbb{Z}}
\newcommand{\sC}{\mathcal{C}}
\newcommand{\sX}{\mathcal{X}}
\newcommand{\sY}{\mathcal{Y}}
\newcommand{\Cor}{\operatorname{\mathbf{Cor}}}
\newcommand{\HI}{\operatorname{\mathbf{HI}}}
\newcommand{\ul}[1]{{\underline{#1}}}
\newcommand{\Cpx}{{\operatorname{\mathbf{Cpx}}}}
\newcommand{\D}{{\operatorname{\mathbf{D}}}}
\newcommand{\PST}{{\operatorname{\mathbf{PST}}}}
\newcommand{\NST}{\operatorname{\mathbf{NST}}}
\newcommand{\DM}{\operatorname{\mathbf{DM}}}
\newcommand{\Hom}{\operatorname{Hom}}
\newcommand{\uHom}{\operatorname{\underline{Hom}}}
\newcommand{\uExt}{\operatorname{\underline{Ext}}}
\newcommand{\Coker}{\operatorname{Coker}}
\newcommand{\Spec}{\operatorname{Spec}}
\newcommand{\Comp}{\operatorname{Comp}}
\newcommand{\Sm}{\operatorname{\mathbf{Sm}}}
\newcommand{\Shv}{\operatorname{\mathbf{Shv}}}
\newcommand{\pro}[1]{\text{\rm pro}_{#1}\text{\rm--}}
\newcommand{\tr}{{\operatorname{tr}}}
\newcommand{\fin}{{\operatorname{fin}}}
\newcommand{\red}{{\operatorname{red}}}
\newcommand{\Zar}{{\operatorname{Zar}}}
\newcommand{\Nis}{{\operatorname{Nis}}}
\newcommand{\id}{{\operatorname{Id}}}
\newcommand{\codim}{{\operatorname{codim}}}
\newcommand{\ch}{{\operatorname{ch}}}
\renewcommand{\lim}{\operatornamewithlimits{\varprojlim}}
\newcommand{\colim}{\operatornamewithlimits{\varinjlim}}
\newcommand{\ol}{\overline}
\renewcommand{\phi}{\varphi}
\renewcommand{\epsilon}{\varepsilon}
\renewcommand{\div}{\operatorname{div}}
\newcommand{\MNS}{\operatorname{\mathbf{MNS}}}
\newcommand{\MNST}{\operatorname{\mathbf{MNST}}}
\newcommand{\MCor}{\operatorname{\mathbf{MCor}}}
\newcommand{\MSm}{\operatorname{\mathbf{MSm}}}
\newcommand{\MPST}{\operatorname{\mathbf{MPST}}}
\newcommand{\CI}{\operatorname{\mathbf{CI}}}
\newcommand{\Sq}{{\operatorname{\mathbf{Sq}}}}
\newcommand{\MSmsq}{{(\MSm)^{\Sq}}}
\newcommand{\bcube}{{\ol{\square}}}
\newcommand{\M}{\mathbf{M}}
\newcommand{\ulMSm}{\operatorname{\mathbf{\underline{M}Sm}}}
\newcommand{\ulMNS}{\operatorname{\mathbf{\underline{M}NS}}}
\newcommand{\ulMPS}{\operatorname{\mathbf{\underline{M}PS}}}
\newcommand{\ulMPST}{\operatorname{\mathbf{\underline{M}PST}}}
\newcommand{\ulMNST}{\operatorname{\mathbf{\underline{M}NST}}}
\newcommand{\ulMCor}{\operatorname{\mathbf{\underline{M}Cor}}}
\newcommand{\ulomega}{\underline{\omega}}
\newcommand{\uPic}{\underline{\Pic}}
\newcounter{spec}
{\end{list}}%
\newtheorem{lemma}{Lemma}[section]
\newtheorem{thm}[lemma]{Theorem}
\newtheorem{prethm}[lemma]{Pretheorem}
\newtheorem{prop}[lemma]{Proposition}
\newtheorem{cor}[lemma]{Corollary}
\newtheorem{corollary}[lemma]{Corollary}
\newtheorem{conj}[lemma]{Conjecture}
\theoremstyle{definition}
\newtheorem{defn}[lemma]{Definition}
\newtheorem{constr}[lemma]{Construction}
\newtheorem{definition}[lemma]{Definition}
\newtheorem{para}[lemma]{}
\theoremstyle{remark}
\newtheorem{remark}[lemma]{Remark}
\newtheorem{example}[lemma]{Example}
\newtheorem{claim}[lemma]{Claim}
\numberwithin{equation}{section}
\numberwithin{equation}{lemma}
\def\lSm{\mathbf{lSm}}
\def\SmlSm{\mathbf{SmlSm}}
\def\Sm{\mathbf{Sm}}
\newcounter{elno}
\begin{document}

\def\aNis{a_{\Nis}}
\def\ulaNis{\underline{a}_{\Nis}}
\def\ulasNis{\underline{a}_{s,\Nis}}
\def\ulaNisfin{\underline{a}^{\fin}_{\Nis}}
\def\ulasNisfin{\underline{a}^{\fin}_{s,\Nis}}
\def\asNis{a_{s,\Nis}}
\def\ulasNis{\underline{a}_{s,\Nis}}
\def\qaq{\quad\text{ and }\quad}
\def\limcat#1{``\underset{#1}{\lim}"}
\def\Comp{\Comp^{\fin}}
\def\ulc{\ul{c}}
\def\ulb{\ul{b}}
\def\ulgam{\ul{\gamma}}
\def\MSm{\operatorname{\mathbf{MSm}}}
\def\MsigmaS{\operatorname{\mathbf{MsigmaS}}}
\def\ulMSm{\operatorname{\mathbf{\ul{M}Sm}}}
\def\ulMsigmaS{\operatorname{\mathbf{\ul{M}NS}}}

\def\ulMPS{\operatorname{\mathbf{\ul{M}PS}}}

\def\ulMsigmaS{\operatorname{\mathbf{\ul{M}PS}_\sigma}}
\def\ulMsigmaSTfin{\operatorname{\mathbf{\ul{M}PST}^{\fin}_\sigma}}
\def\ulMsigmaST{\operatorname{\mathbf{\ul{M}PST}_\sigma}}
\def\MsigmaS{\operatorname{\mathbf{MPS}_\sigma}}
\def\MsigmaST{\operatorname{\mathbf{MPST}_\sigma}}
\def\MsigmaSTfin{\operatorname{\mathbf{MPST}^{\fin}_\sigma}}

\def\ulMNS{\operatorname{\mathbf{\ul{M}NS}}}
\def\ulMNSTfin{\operatorname{\mathbf{\ul{M}NST}^{\fin}}}
\def\ulMNSfin{\operatorname{\mathbf{\ul{M}NS}^{\fin}}}
\def\ulMNST{\operatorname{\mathbf{\ul{M}NST}}}
\def\MNS{\operatorname{\mathbf{MNS}}}
\def\MNST{\operatorname{\mathbf{MNST}}}
\def\MNSTfin{\operatorname{\mathbf{MNST}^{\fin}}}
\def\RSC{\operatorname{\mathbf{RSC}}}

\def\LogRec{\operatorname{\mathbf{LogRec}}}

\def\MSmsq{\MSm^{\Sq}}
\def\Comp{\operatorname{\mathbf{Comp}}}
\def\uli{\ul{i}}
\def\ulis{\ul{i}_s}
\def\is{i_s}
\def\qfor{\text{ for }\;\;}
\def\CIlog{\operatorname{\mathbf{CI}}^{\mathrm{log}}}
\def\CIltr{\operatorname{\mathbf{CI}}^{\mathrm{ltr}}}
\def\CIt{\operatorname{\mathbf{CI}}^\tau}
\def\CItsp{\operatorname{\mathbf{CI}}^{\tau,sp}}

\def\otCIsp{\otimes_{\CI}^{sp}}
\def\otCINissp{\otimes_{\CI}^{\Nis,sp}}

\def\hM#1{h_0^{\bcube}(#1)}
\def\hMNis#1{h_0^{\bcube}(#1)_{\Nis}}
\def\hMM#1{h^0_{\bcube}(#1)}
\def\hMw#1{h_0(#1)}
\def\hMwNis#1{h_0(#1)_{\Nis}}

\def\ihF#1{F^{#1}}
\def\ihFA{\ihF {\A^1}}

\def\istm{\iota_{st,m}}
\def\im{\iota_m}
\def\est{\epsilon_{st}}
\def\tL{\tilde{L}}
\def\tX{\tilde{X}}
\def\tY{\tilde{Y}}
\def\omegaCI{\omega^{\CI}}
\def\qwith{\;\text{ with} }
\def\aVNis{a^V_\Nis}
\def\ulMCorls{\ulMCor_{ls}}

\def\Zinf{Z_\infty}
\def\Einf{E_\infty}
\def\Xinf{X_\infty}
\def\Yinf{Y_\infty}
\def\Pinf{P_\infty}

\def\Lot{{\cubegm\otimes\cubegm}}

\def\Ln#1{\Lambda_n^{#1}}
\def\tLn#1{\widetilde{\Lambda_n^{#1}}}
\def\tild#1{\widetilde{#1}}
\def\otuCINis{\otimes_{\underline{\CI}_\Nis}}
\def\otCI{\otimes_{\CI}}
\def\otCINis{\otimes_{\CI}^{\Nis}}
\def\tF{\widetilde{F}}
\def\tG{\widetilde{G}}
\def\bcubered{\bcube^{\textrm{red}}}
\def\cubegm{\bcube^{(1)}}
\def\cubegma{\bcube^{(a)}}
\def\cubegmb{\bcube^{(b)}}
\def\cubegmred{\bcube^{(1)}_{red}}
\def\cubegmreda{\bcube^{(a)}_{red}}
\def\cubegmredb{\bcube^{(b)}_{red}}

\def\LT{\bcube^{(1)}_{T}}
\def\LU{\bcube^{(1)}_{U}}
\def\LV{\bcube^{(1)}_{V}}
\def\LW{\bcube^{(1)}_{W}}
\def\LTred{\bcube^{(1)}_{T,red}}
\def\Lred{\bcube^{(1)}_{red}}
\def\LTred{\bcube^{(1)}_{T,red}}
\def\LUred{\bcube^{(1)}_{U,red}}
\def\LVred{\bcube^{(1)}_{V,red}}
\def\LWred{\bcube^{(1)}_{W,red}}
\def\PP{\P}
\def\AA{\A}

\def\LL{\bcube^{(2)}}
\def\LLred#1{\bcube^{(2)}_{#1,red}}
\def\LLredd{\bcube^{(2)}_{red}}
\def\Lredd#1{\bcube_{#1,red}}

\def\Lnredd#1{\bcube^{(#1)}_{red}}

\def\LLT{\bcube^{(2)}_T}
\def\LLTred{\bcube^{(2)}_{T,red}}

\def\LLU{\bcube^{(2)}_U}
\def\LLUred{\bcube^{(2)}_{U,red}}

\def\LLS{\bcube^{(2)}_S}
\def\LLSred{\bcube^{(2)}_{S,red}}
\def\tMCor{\Hom_{\MPST}}
\def\otHINis{\otimes_{\HI}^{\Nis}}

\def\Sh{\operatorname{\mathbf{Shv}}}
\def\Shv{\operatorname{\mathbf{Shv}}}
\def\PSh{\operatorname{\mathbf{PSh}}}
\def\Shltr{\operatorname{\mathbf{Shv}_{dNis}^{ltr}}}
\def\Shlog{\operatorname{\mathbf{Shv}_{dNis}^{log}}}
\def\Shvlog{\operatorname{\mathbf{Shv}^{log}}}
\def\SmlSm{\operatorname{\mathbf{SmlSm}}}
\def\lSm{\operatorname{\mathbf{lSm}}}
\def\lCor{\operatorname{\mathbf{lCor}}}
\def\SmlCor{\operatorname{\mathbf{SmlCor}}}
\def\PShltr{\operatorname{\mathbf{PSh}^{ltr}}}
\def\PShlog{\operatorname{\mathbf{PSh}^{log}}}
\def\lDM{\operatorname{\mathbf{logDM}^{eff}}}
\def\logDM{\operatorname{log\mathcal{DM}^{eff}}}
\def\DM{\operatorname{\mathbf{DM}^{eff}}}
\def\lDA{\operatorname{\mathbf{logDA}^{eff}}}
\def\DA{\operatorname{\mathbf{DA}^{eff}}}
\def\Log{\operatorname{\mathcal{L}\textit{og}}}
\def\Rsc{\operatorname{\mathcal{R}\textit{sc}}}
\def\pro{\textit{pro-}}
\def\plim{\textrm{``lim''}}

\def\hofib{\textrm{hofib}}
\def\triv{\textrm{triv}}
\def\ABl{\mathcal{A}\textit{Bl}}
\def\divsm#1{{#1_\textrm{div}^{\textrm{Sm}}}}

\newcommand{\dNis}{{\operatorname{dNis}}}
\newcommand{\ABNis}{{\operatorname{AB-Nis}}}
\newcommand{\sNis}{{\operatorname{sNis}}}
\newcommand{\sZar}{{\operatorname{sZar}}}
\renewcommand{\uPic}{\underline{\textrm{Pic}}}
\renewcommand{\M}{\mathcal{M}}
\newcommand{\cofib}{\textrm{Cofib}}

\newcommand{\Gmlog}{\G_m^{\log}}
\newcommand{\Gmlogred}{\overline{\G_m^{\log}}}

\newcommand{\varcolim}{\mathop{\mathrm{colim}}}
\newcommand{\varlim}{\mathop{\mathrm{lim}}}
\newcommand{\tensor}{\otimes}

\newcommand{\eq}[2]{\begin{equation}\label{#1}#2 \end{equation}}
\newcommand{\eqalign}[2]{\begin{equation}\label{#1}\begin{aligned}#2 \end{aligned}\end{equation}}

\def\varplim#1{\text{``}\varlim_{#1}\text{''}}

\title{Connectivity and purity for logarithmic motives}
\author{Federico Binda and Alberto Merici}

\address{Dipartimento di Matematica ``Federigo Enriques'',  Universit\`a degli Studi di Milano\\ Via Cesare Saldini 50, 20133 Milano, Italy}
\email{federico.binda@unimi.it}

\address{Institut f\"ur Mathematik, Universit\"at Zurich, Winterthurerstr. 190, CH-8057 Z\"urich, Switzerland}
\email{alberto.merici@math.uzh.ch}

\thanks{F.B.\ is supported by the PRIN “Geometric, Algebraic and Analytic Methods in Arithmetic”.}
\thanks{A.M.\ is supported by the Swiss National Science Foundation (SNSF), project 200020\_178729}

\begin{abstract}The goal of this paper is to extend the work of Voevodsky and Morel  on the homotopy $t$-structure on the category of motivic complexes to the context of motives for logarithmic schemes. To do so, we prove an analogue of Morel's connectivity theorem and show a purity statement for $(\P^1, \infty)$-local complexes of sheaves with log transfers. 
   The homotopy $t$-structure on $\lDM(k)$ is proved to be compatible with Voevodsky's $t$-structure i.e.\ we show that the comparison functor $R^{\bcube}\omega^*\colon \DM(k)\to \lDM(k)$ is $t$-exact. 
   The heart of the homotopy $t$-structure on $\lDM(k)$ is the Grothendieck abelian category of strictly cube-invariant sheaves with log transfers: we use it to build a new version of the category of reciprocity sheaves in the style of Kahn--Saito--Yamazaki and R\"ulling.
\end{abstract}
\maketitle
\tableofcontents
\section{Introduction}
Voevodsky's category of motivic complexes over a perfect field $k$ is based on a simple idea: most cohomology theories for smooth $k$-schemes are insensitive to the affine line, i.e.\ they satisfy $\A^1$-homotopy invariance. This observation led Voevodsky to introduce as a building block of his theory of motives the category of homotopy invariant sheaves with transfers $\HI_{\Nis}(k)$, that is, sheaves $F$ for the Nisnevich topology defined on the category of finite correspondences over $k$ such that $F(X\times \A^1)\xrightarrow{\simeq} F(X)$ for every smooth $k$-scheme $X$. These sheaves enjoy many nice properties: the category $\HI_{\Nis}(k)$ is a Grothendieck abelian subcategory of the category $\Shv_{\Nis}^{\tr}(k)$ of Nisnevich sheaves with transfers, closed under extensions and equipped with a (closed) symmetric monoidal structure $\tensor_{\HI}$. Moreover, a celebrated theorem of Voevodsky shows that the cohomology presheaves $H^n_{\Nis}(-, F)$ of a homotopy invariant sheaf with transfers $F$ are still $\A^1$-homotopy invariant. In fact, $\HI_{\Nis}(k)$ can be identified with the heart of a certain $t$-structure on the triangulated category $\DM(k)$, induced by the standard $t$-structure on the derived category $\D(\Shv_{\Nis}^{\tr}(k))$ and called by Voevodsky the \emph{homotopy $t$-structure}. The $\A^1$-invariance of the cohomology of homotopy invariant sheaves can be rephrased by saying that a sheaf $F\in \HI_{\Nis}(k)$, seen as object of $\D(\Shv_{\Nis}^{\tr}(k))$, is \emph{local} with respect to the Bousfield localization of $\D(\Shv_{\Nis}^{\tr}(k))$ over  the complexes $(\A^1_X)[n] \to X[n]$ for $X\in \Sm(k)$.

Much work has been done around the homotopy $t$-structure, including D\'eglise's extension to the non-effective version of $\DM(k)$ and the identification of its heart with the category of Rost's cycle modules \cite{DegModHomot}, and Morel's work on the stable homotopy category $\mathbf{SH}(k)$ \cite{Morelconectivity}. 
In informal terms, we can interpret the existence of the homotopy $t$-structure as  a manifestation of the interplay between the Postnikov truncation functors $\tau_{\leq n}$ and the $\A^1$-localization functor on the derived category $\D(\Shv_{\Nis}^{\tr}(k))$. This interplay is precisely expressed by Morel's connectivity theorem.

Voevodsky's category of motives over a field has been recently extended to the setting of logarithmic algebraic geometry in \cite{bpo}. The basic objects in this context are no longer smooth $k$-schemes but rather fine and saturated log schemes, log smooth over a base considered with trivial log structure (typically, the base is a perfect field). The Nisnevich topology on the underlying schemes defines naturally a topology, called the \emph{strict Nisnevich topology}, $\sNis$ for short. This topology is not enough to guarantee that the resulting category of motives satisfies a number of nice properties, and needs to be replaced with a subtle variant, the \emph{dividing Nisnevich topology}, $\dNis$ for short, with additional covers given by certain blow-ups with center in the support of the log structure. The affine line $\A^1$ is replaced by its compactified avatar, i.e. the log scheme $\bcube=(\P^1, \infty)$ obtained by considering the compactifying log structure along the embedding $\A^1\hookrightarrow \P^1$. The category of log motives $\lDM(k, \Lambda)$ (with transfers) is then defined as the homotopy category of the $(\dNis, \bcube)$-local model structure on the category of (unbounded) chain complexes of presheaves with logarithmic transfers, $\mathbf{C}(\PSh^{\rm ltr}(k, \Lambda))$ for $\Lambda$ a ring of coefficients. See \cite[4-5]{bpo} and Section \ref{sec:preliminaries} below for more details. The variant without transfers will be denoted $\lDA(k, \Lambda)$, and it is obtained as Bousfield localization of the category of (unbounded) chain complexes of presheaves $\mathbf{C}(\PSh^{\rm log}(k, \Lambda))$.
\medskip

The goal of this paper is to develop in the logarithmic context the analogue of Voevodsky's homotopy $t$-structure, and to derive some consequences from this. As discussed above, the homotopy $t$-structure on (usual) motives is induced by the standard $t$-structure on the derived category of sheaves. In order to restrict this $t$-structure to the subcategory of local objects, one needs to understand how much connectivity (with respect to the homology sheaves) is lost after taking a fibrant replacement for the $(\A^1, \Nis)$-local model structure. This is the content of Morel's connectivity theorem \cite[Thm. 6.1.8]{Morelconectivity}.

Our first main result is the following logarithmic variant.

\begin{thm}(see Theorem \ref{connectivity})
\label{connectivityintro}Assume that $k$ is a perfect\footnote{If $\ch(k)$ is invertible in $\Lambda$, this assumption can be relaxed since $\lCor(k,\Lambda)\cong \lCor(k^{\emph{perf}},\Lambda)$} field and let $\tau \in \{\sNis, \dNis\}$.
Let $C\in \Cpx(\PShlog(k,\Lambda))$ be locally $n$-connected for the $\tau$-topology. Then any $(\tau,\bcube)$-fibrant replacement $C\to L$ is locally $n$-connected.
\end{thm}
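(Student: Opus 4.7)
The plan is to adapt Morel's proof of \cite[Thm.~6.1.8]{Morelconectivity} to the cube-invariant, dividing Nisnevich setting. Morel constructs an explicit fibrant replacement as a homotopy colimit of iterates of the $\A^1$-singular complex alternated with Nisnevich sheafification, observes that filtered homotopy colimits preserve connectivity of homology sheaves, and reduces the theorem to verifying that each individual step preserves connectivity. I would follow the same skeleton.

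First I would produce a functorial $(\tau,\bcube)$-fibrant replacement $L$ as a transfinite iterate built from $R_\tau$ (a functorial $\tau$-local replacement) and the cubical singular complex
$$\mathrm{Sing}^{\bcube}(C)(X) := \mathrm{Tot}(C(X\times \bcube^\bullet)),$$
so that $L$ is a sequential homotopy colimit of objects of the form $(R_\tau\circ \mathrm{Sing}^{\bcube})^m(C)$. Since filtered homotopy colimits commute with the formation of $\tau$-homology sheaves, it suffices to check that $R_\tau$ and $\mathrm{Sing}^{\bcube}$ each preserve local $n$-connectedness.

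For $\tau = \sNis$, $R_{\sNis}$ reduces to classical Nisnevich sheafification on the underlying schemes, which preserves $n$-connectedness because the Nisnevich site on a noetherian scheme has finite cohomological dimension, so the hypercohomology spectral sequence converges and the relevant sheaves vanish. For $\tau = \dNis$, I would factor $a_{\dNis} = a_{\dNis}\circ a_{\sNis}$ and handle the residual dividing step by computing it as a filtered colimit over the cofiltered system of admissible blow-ups, invoking the descent results of \cite{bpo}; since filtered colimits commute with homology, connectivity is preserved.

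The main obstacle is showing that $\mathrm{Sing}^{\bcube}$ preserves local $n$-connectedness. By the spectral sequence attached to the cubical filtration, the problem reduces to establishing $\ul{H}_i(C)(X\times \bcube^q) = 0$ for all $i \le n$ and $q \ge 0$. Unlike $\A^1$, the log scheme $\bcube$ carries the nontrivial compactifying log structure at $\infty$, which obstructs a direct appeal to Morel. I would proceed by induction on $q$: the zero section $X \hookrightarrow X\times \bcube$ is split by the projection, yielding a decomposition $C(X\times \bcube) \simeq C(X)\oplus \widetilde{C}(X\times\bcube)$, and I would reduce the vanishing of $\widetilde{C}$ in the relevant range, via the open–closed decomposition of $\bcube$ into $\A^1$ and the log point at $\infty$ together with $\sNis$-descent, to the analogous statement on the underlying smooth scheme $X\times \A^1$ — where Morel's classical connectivity theorem applies. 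Making precise the behaviour of the log boundary divisor under this reduction, and checking compatibility with the dividing covers, is the technical core of the argument.
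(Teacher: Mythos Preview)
Your approach differs genuinely from the paper's, and the central step has a real gap.

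The paper does not use a singular-complex construction. Following Ayoub, the fibrant replacement is built as $\Phi^\infty$, the sequential colimit of iterates of
\[
\Phi(C) = \Cone\bigl(\Lambda(\bcubered)\otimes \uHom(\Lambda(\bcubered), C_\tau) \to C_\tau\bigr)
\]
(Theorem~\ref{thm;loc}). Crucially, the paper does \emph{not} attempt to show that each iterate preserves local $n$-connectedness. It introduces the weaker notion of \emph{$n$-preconnectedness} (Definition~\ref{def:preconnected-genconnected}): one asks only that $H_j(C(X))=0$ for $j\le n-\dim\ul{X}$. This weaker condition \emph{is} preserved by $\Phi$, because $\uHom(\Lambda(\bcube),-)$ drops preconnectedness by at most $\dim\bcube=1$ while tensoring with $\Lambda(\bcube)$ restores it (Lemma~\ref{tensorhom}). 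One deduces that the fibrant replacement $L$ is $n$-preconnected, hence generically $n$-connected, and the final upgrade to local $n$-connectedness is the purity statement (Theorem~\ref{thm;purity}, Corollary~\ref{cor;purity}): for $(\sNis,\bcube)$-fibrant $L$ and henselian local $X\in\widetilde{\SmlSm}(k)$, the map $H_i(L(X))\to H_i(L(\eta_X,\triv))$ is injective. The proof of purity at points with nontrivial log structure is the main new technical work of the paper and relies on the Thom-space computations of \cite{bpo}.

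Your reduction bypasses all of this and breaks at the $\mathrm{Sing}^{\bcube}$ step. You claim the filtration spectral sequence reduces matters to the vanishing of $\ul{H}_i(C)(X\times\bcube^q)$ for $i\le n$. If $\ul{H}_i(C)$ means $a_\tau H_i(C)$, this is zero by hypothesis and the reduction is vacuous. What the $E_1$-page actually contains is $a_\tau H_i\bigl(\uHom(\Lambda(\bcube^q),C)\bigr)$, i.e.\ the sheafification \emph{in $X$} of $X\mapsto H_i(C(X\times\bcube^q))$. For a henselian local $V$ the stalk is $H_i(C(V\times\bcube^q))$, and $V\times\bcube^q$ is not henselian local; local $n$-connectedness of $C$ gives you nothing here. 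Your proposed open--closed decomposition of $\bcube$ into $\A^1$ and the log point at $\infty$ does not rescue this: the Gysin/Thom triangles of \cite{bpo} relating $\bcube$-data to the boundary are only available for $\bcube$-local objects, and controlling the boundary contribution in terms of Morel's classical $\A^1$-statement is precisely the log-geometric difficulty that the paper's preconnectedness-plus-purity strategy is designed to absorb.
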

A complex of presheaves is said to be locally $n$-connected with respect to a topology $\tau$ if the homology sheaves $a_{\tau} H_i(C)$ vanish below $n$. For the proof of Theorem \ref{connectivityintro} we follow the pattern given by Ayoub in his adaptation of Morel's argument to the $\P^1$-local theory, developed in \cite{P1loc}. In particular, the statement can be reduced to a purity result for local complexes:
\begin{thm}(see Theorem \ref{thm;purity})\label{thm;purityintro}
Let $X$ be a connected fs log smooth $k$-scheme which is essentially smooth over $k$ (in particular, the underlying scheme $\ul{X}$ is an essentially smooth $k$-scheme) such that $\underline{X}$ is an henselian local scheme. Then the map
\[
H_i(C(X))\to H_i(C(\eta_{X},\triv))
\]
is injective for every $(\sNis, \bcube)$-fibrant complex of presheaves $C\in \Cpx(\PShlog(k,\Lambda))$. 
\end{thm}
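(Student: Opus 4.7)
The plan is to adapt Morel's classical purity argument \cite{Morelconectivity}, in the form developed by Ayoub \cite{P1loc} for the $\P^1$-local case, to the logarithmic setting, with $\bcube$-invariance playing the role of $\A^1$-invariance and the strict Nisnevich topology replacing the Nisnevich topology.

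Since $\underline{X}$ is essentially smooth henselian local and $X$ is fs log smooth, the log structure on $X$ is necessarily given by a strict normal crossings divisor $D = D_1 + \cdots + D_r$ on $\underline{X}$. Let $U = \underline{X} \setminus D$ be equipped with the trivial log structure. The map in question factors as
\begin{equation*}
H_i(C(X)) \longrightarrow H_i(C(U, \triv)) \longrightarrow H_i(C(\eta_X, \triv)),
\end{equation*}
and it suffices to prove that each arrow is injective. The second arrow is handled by invoking the classical Morel purity theorem for $(\A^1, \Nis)$-local complexes of sheaves with transfers, applied to the essentially smooth $k$-scheme $U$ and its generic point; the restriction of $C$ to smooth schemes with trivial log structure is indeed $(\A^1, \Nis)$-local, as one verifies through the comparison functor $R^{\bcube}\omega^{*}$ mentioned in the introduction.

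The injectivity of the first arrow is the heart of the argument, and I would attack it by induction on $r$. The base case $r = 0$ is vacuous. For the inductive step, a logarithmic analogue of Gabber's geometric presentation lemma, compatible with the strict Nisnevich topology and the fs log smooth structure, should produce after refinement a smooth strict morphism
\begin{equation*}
(\underline{X}, D) \longrightarrow (\underline{S}, D_1 + \cdots + D_{r-1}) \times (\A^1, \{0\})
\end{equation*}
along which $D_r$ identifies with the pullback of the zero divisor. Using strict Nisnevich descent and applying $\bcube$-invariance in the $\A^1$-direction (via the open embedding $(\A^1, \{0\}) \hookrightarrow \bcube$ that exposes the compactifying log structure at $\infty$), one constructs a splitting of the restriction map up to terms which, by the inductive hypothesis applied to $(\underline{S}, D_1 + \cdots + D_{r-1})$, satisfy the desired injectivity.

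The hard part will be establishing the logarithmic Gabber presentation in a form compatible with the strict Nisnevich topology: unlike in the dividing Nisnevich case, one cannot use blow-ups centered on the boundary to simplify the log structure, so the geometric model must be produced directly from the SNC data. Moreover, the use of $\bcube$-invariance in the $\A^1$-direction requires careful bookkeeping of the two log structures involved, namely the one supported at $0$ on $(\A^1, \{0\})$ and the one supported at $\infty$ on $\bcube$, so that the splitting construction can be organized to recurse on log smooth schemes with strictly fewer boundary components.
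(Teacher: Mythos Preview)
There is a genuine gap in your treatment of the second arrow
\[
H_i(C(U,\triv)) \longrightarrow H_i(C(\eta_X,\triv)).
\]
You assert that the restriction of $C$ to schemes with trivial log structure is $(\A^1,\Nis)$-local, and then invoke Morel's classical purity. This is false: $\bcube$-invariance gives $C(Y)\simeq C(Y\times\bcube)$, but $Y\times\bcube=(Y\times\P^1,\,Y\times\{\infty\})$ carries a nontrivial log structure even when $Y$ does not, so it says nothing about $C(Y\times(\A^1,\triv))$. The functor $R^{\bcube}\omega^*$ goes in the wrong direction for your purpose: it embeds $\A^1$-local complexes into $\bcube$-local ones, not conversely. (You also refer to sheaves with transfers, but $C$ lives in $\PShlog$, not $\PShltr$.) The paper handles precisely this trivial-log-structure case in Lemma~\ref{lem;puritytrivial}(i) by a direct $\bcube$-adapted argument: Gabber's presentation produces a strict Nisnevich square comparing $X$ with $\P^1_Y$ \emph{and} with $\bcube_Y=(\P^1_Y,\infty_Y)$, and one uses that the projection $\bcube_Y\to Y$ is a $\bcube$-equivalence to kill the relevant class. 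No $\A^1$-invariance is available or invoked.

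Your sketch for the first arrow is closer in spirit to the paper's argument, but still misses the key mechanism. The paper's induction (on $\dim\underline{X}$ and on the number $n$ of boundary components) does \emph{not} aim directly at the open complement $U=\underline{X}\setminus|\partial X|$. Instead, setting $X^-=(\underline{X},D_1+\cdots+D_{n-1})$, one compares $C(X^-)\to C(X)$ with $C(U^-)\to C(U)$ for suitable dense opens $U$ meeting $D_n$ densely, and identifies the cofibers with motivic Thom spaces $MTh(N_{D_n/X^-})$ via the results of \cite[\S7.4--7.5]{bpo}; the injectivity on Thom spaces then follows by induction on dimension applied to $D_n$. This yields injectivity of $H_iC(X)\to H_iC(\Spec\mathcal{O}_{\underline{X},\eta_{D_n}},\iota^*\partial X)$, a DVR with log structure at the closed point. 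A separate technical lemma (Lemma~\ref{lem;saveseverything}) produces a section of the residue field inside this DVR, so that one may change base to $k(\eta_{D_n})$ and finish with the one-dimensional case (Lemma~\ref{lem;puritytrivial}(ii)). Your proposed ``logarithmic Gabber presentation'' landing in $(\underline{S},D_1+\cdots+D_{r-1})\times(\A^1,\{0\})$ is not what is used; the paper exploits the explicit henselian coordinates $k(x)\{t_1,\ldots,t_r\}$ directly to build the needed pro-Nisnevich squares, and the recursion runs through Thom spaces rather than through a splitting construction.
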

Here, we write $\eta_X$ for the generic point of $\ul{X}$, and $(\eta_{X}, \triv)$ for $\eta_X$ seen as a log scheme with trivial log structure. The proof is quite long, and for it we use in an essential way the results developed in \cite{bpo}, such as the existence of a number of distinguished triangles in $\lDA(k)$ and a description of the motivic Thom spaces \cite[7.4]{bpo}: in particular, new ingredients (compared to the argument given by Morel or Ayoub) are required when the log structure on $X$ is not trivial.

We remark that the original formulation of Morel's connectivity theorem was given for the  $\A^1$-localization of  presheaves of $S^1$-spectra, rather than presheaves of chain complexes. The arguments given in this paper can be easily adapted to that context. Since our main application is about the motivic category introduced in \cite{bpo}, we decided to state the results  for $\Cpx(\PShlog(k,\Lambda))$.

Having the analogue of Morel's connectivity theorem at disposal, it is possible to characterize $\bcube$-local complexes of sheaves:
\begin{cor}(see Corollary \ref{heart})
\label{heartintro}
Let $C\in \D_{\dNis}(\PSh^{\rm t}(k, \Lambda))$ where $t\in \{{\rm log}, {\rm ltr}\}$. Then the following are equivalent:
\begin{enumerate}
    \item[(a)] $C$ is $\bcube$-local
    \item[(b)] the homology sheaves $a_{\dNis}H_iC$ are strictly $\bcube$-invariant for every $i\in \Z$, i.e. their cohomology presheaves are $\bcube$-invariant.
\end{enumerate}
\end{cor}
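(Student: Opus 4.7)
The plan is to use Theorem \ref{connectivityintro} to establish that the standard $t$-structure on $\D_{\dNis}$ restricts to a $t$-structure on the full subcategory $\D_{\dNis,\bcube}$ of $\bcube$-local objects, whose heart is the abelian category of strictly $\bcube$-invariant sheaves. The corollary is then the unpacking of ``$C$ has $a_\dNis$-sheafified homology in the heart''.

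For $(b)\Rightarrow(a)$, I would first observe that a single sheaf $F$ is strictly $\bcube$-invariant exactly when $F[0]$ is $\bcube$-local in $\D_{\dNis}$, since
$$\Hom_{\D_{\dNis}}(X[n], F[0]) = H^{-n}_{\dNis}(X,F)$$
for every representable $X$ and strict $\bcube$-invariance is precisely $\bcube$-invariance of this presheaf for all $n$. For bounded $C$ this implies (a) by induction on the length, using the triangles $\tau_{\leq n-1}C \to \tau_{\leq n}C \to a_{\dNis}H_n(C)[n]$ and closure of $\bcube$-local objects under extensions and shifts. The unbounded case follows by writing $C \simeq \holim_n \tau_{\geq -n}C$, with convergence ensured by the bounded $\dNis$-cohomological dimension on log smooth $k$-schemes proved in \cite{bpo}, and using that $\bcube$-locality is preserved under homotopy limits.

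For the substantive direction $(a)\Rightarrow(b)$, I would apply Theorem \ref{connectivityintro} to the $\bcube$-localization of the naive truncation triangle $\tau_{\geq n+1}C \to C \to \tau_{\leq n}C$. Writing $L_\bcube$ for the Bousfield localization at the maps $\bcube\times X \to X$, and using that $L_\bcube C \simeq C$, this yields a triangle of $\bcube$-local objects
$$L_\bcube(\tau_{\geq n+1}C) \longrightarrow C \longrightarrow L_\bcube(\tau_{\leq n}C),$$
in which $L_\bcube(\tau_{\geq n+1}C)$ is locally $(n+1)$-connected by the connectivity theorem. The long exact sequence in homology then gives $a_{\dNis}H_i L_\bcube(\tau_{\leq n}C) \simeq a_{\dNis}H_i(C)$ for $i \le n$, and matches the higher homology in a controlled way. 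An induction on $n$, with base case provided by complexes concentrated in a single degree (where the claim is tautological; one can start from the bottom for bounded below $C$ and dually from the top for bounded above $C$), combined with the already-established $(b)\Rightarrow(a)$ to promote previously identified strictly $\bcube$-invariant sheaves back to $\bcube$-local complexes, allows a stepwise identification via the octahedral axiom of the ``heart pieces'' $L_\bcube(a_{\dNis}H_n(C)[n])$ with $a_{\dNis}H_n(C)[n]$ inside $\D_{\dNis}$; this is exactly the strict $\bcube$-invariance of $a_{\dNis}H_n(C)$.

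The hard part is controlling the homology sheaves of $L_\bcube(\tau_{\leq n}C)$ in degrees strictly above $n$, where Bousfield localization can a priori generate spurious homology with no counterpart in $C$; Theorem \ref{connectivityintro}, applied to the locally $(n+1)$-connected complex $L_\bcube(\tau_{\geq n+1}C)$, forces the relevant portion of the long exact sequence to collapse to the desired isomorphisms, which is precisely the input that makes the inductive argument work. A secondary technicality is the convergence of the Postnikov tower in the unbounded case of $(b)\Rightarrow(a)$, which rests on properties of the dividing Nisnevich topology already established in \cite{bpo}.
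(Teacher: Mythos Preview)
Your $(b)\Rightarrow(a)$ is fine and matches the paper's one-line spectral-sequence remark.

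For $(a)\Rightarrow(b)$, you correctly isolate the key input---connectivity forces $L_\bcube(\tau_{\ge n+1}C)$ to have vanishing $a_{\dNis}H_i$ for $i\le n$---but the induction you sketch does not close. The base case is tautological only for a complex already concentrated in a single degree; to reach that base case from a general $\bcube$-local $C$ you must split $C$ along a truncation into two pieces, at least one of which is already known to be $\bcube$-local, so that the inductive hypothesis applies to the other. Neither $\tau_{\ge n+1}C$ nor $\tau_{\le n}C$ has this property a priori, and the octahedral axiom does not produce it. You flag the obstruction yourself (``controlling the homology sheaves of $L_\bcube(\tau_{\le n}C)$ in degrees strictly above $n$''), but the connectivity theorem only collapses the long exact sequence in degrees $\le n$; it says nothing about the spurious homology of $L_\bcube(\tau_{\le n}C)$ above degree $n$, so the induction cannot get started.

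The paper extracts from the same input a direct retraction argument, isolated as Theorem~\ref{truncation} and proved immediately before this corollary: since $L_\bcube(\tau_{\ge 0}C)$ has vanishing $a_{\dNis}H_i$ for $i<0$ by connectivity, the map $L_\bcube(\tau_{\ge 0}C)\to C$ factors through $\tau_{\ge 0}C$ by the universal property of the truncation, and composing with the localization unit $\tau_{\ge 0}C\to L_\bcube(\tau_{\ge 0}C)$ exhibits $\tau_{\ge 0}C$ as a retract of a $\bcube$-local object, hence $\bcube$-local. Once truncations preserve $\bcube$-locality, the corollary is one line: $a_{\dNis}H_i(C)[i]=\tau_{\ge i}\tau_{\le i}C$ is $\bcube$-local, i.e.\ $a_{\dNis}H_i(C)$ is strictly $\bcube$-invariant. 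No induction, no octahedral manipulation, and no separate treatment of the unbounded case is needed.
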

We can then consider the inclusions
\begin{align*}
\lDA(k,\Lambda) & \hookrightarrow \D_{\dNis}(\PSh^{\rm log}(k, \Lambda))  \\
\lDM(k,\Lambda) &\hookrightarrow\D_{\dNis}(\PSh^{\rm ltr}(k, \Lambda)) 
\end{align*}
that identify $\lDA(k,\Lambda)$ and $\lDM(k,\Lambda)$ with the subcategories of $\bcube$-local complexes. Using Theorem \ref{connectivityintro} it is easy to show that the truncation functors $\tau_{\leq n}$ and $\tau_{\geq n}$ preserve  the categories of  $\bcube$-local complexes, and therefore that the standard $t$-structures on the categories of (pre)sheaves induce the desired  homotopy $t$-structure on log motives. We denote by $\CI^{\log}_{\dNis}$ (and by $\CI^{\rm ltr}_{\dNis}$ for the variant with transfers) its heart, which is then identified with the category of strictly $\bcube$-invariant $\dNis$-sheaves. It follows from the fact that the $t$-structures are compatible with colimits (in the sense of \cite{ha}) that $\CI^{\log}_{\dNis}$ and $\CI^{\rm ltr}_{\dNis}$ are Grothendieck abelian categories. See Theorem \ref{tstructure}. In particular, the inclusions
\begin{align*}
i\colon \CI^{\log}_{\dNis} & \hookrightarrow \Shv_{\dNis}^{\log}(k, \Lambda) \\
i^{\rm tr}\colon \CI^{\rm ltr}_{\dNis} &\hookrightarrow \Shv_{\dNis}^{\rm ltr}(k, \Lambda)
\end{align*}
admit both a left and a right adjoint. 
Objects of $\CIlog_{\dNis}$ and of $\CIltr_{\dNis}$ satisfy the following \emph{purity} property.
\begin{thm}(see Theorem \ref{nonderivedpurity})
\label{nonderivedpurity-intro}
Let $F\in \CIlog_{\dNis}$ (resp. $F\in \CIltr_{\dNis}$). Then for all $X\in \SmlSm(k)$ (see the notation below) and $U\subseteq X$ an open dense, the restriction $F(X)\to F(U)$ is injective.
\end{thm}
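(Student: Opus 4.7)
My plan is to deduce Theorem \ref{nonderivedpurity-intro} from the technical purity result Theorem \ref{thm;purityintro} via Nisnevich descent, exploiting that an open dense subscheme of a connected $X\in\SmlSm(k)$ contains the generic point of $\underline{X}$ with trivial log structure.

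First, since $F$ is a $\dNis$-sheaf and in particular a strict Nisnevich sheaf on $\SmlSm(k)$, the injectivity of $F(X)\to F(U)$ can be tested at strict Nisnevich stalks: it suffices to check that for every point $x\in X$, the map $F(X^h_x)\to F(U\times_X X^h_x)$ is injective, where $X^h_x$ denotes the henselization at $x$ equipped with the pulled-back log structure. For $X\in\SmlSm(k)$ the underlying scheme $\underline{X}$ is smooth, so $\underline{X^h_x}$ is essentially smooth over $k$, and $U\times_X X^h_x$ remains open and dense. This reduces the problem to the hypotheses of Theorem \ref{thm;purityintro}. Moreover, since the log structure on $X\in\SmlSm(k)$ is supported on a strict normal crossings divisor, the generic point $\eta_X$ of $\underline{X}$ carries the trivial log structure and lies in every open dense $U$, so the restriction factors as
\[
F(X)\xrightarrow{\alpha}F(U)\xrightarrow{\beta}F(\eta_X,\triv),
\]
and it suffices to show that $\beta\circ\alpha$ is injective.

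To show this, I would take a $(\dNis,\bcube)$-fibrant replacement $F\to C$ of $F$ viewed as a complex concentrated in degree zero; this exists by Corollary \ref{heartintro}, since $F$ is strictly $\bcube$-invariant. Because strict Nisnevich covers form a subclass of dividing Nisnevich covers, $C$ is automatically $(\sNis,\bcube)$-fibrant as well, and the fact that $F$ is a $\dNis$-sheaf gives $H_0(C(Y))=F(Y)$ for every $Y\in\SmlSm(k)$. Applying Theorem \ref{thm;purityintro} to $C$ at the essentially smooth henselian local $X$ then yields the injectivity of $H_0(C(X))\to H_0(C(\eta_X,\triv))$, i.e.\ of $\beta\circ\alpha$, as required. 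The case $F\in\CIltr_{\dNis}$ follows identically after forgetting transfers, since the underlying $\Lambda$-module structure is preserved. The main technical obstacle I anticipate is verifying that the $(\dNis,\bcube)$-fibrant replacement $C$ recovers $F$ in degree zero on the evaluations appearing above; this should follow from the fact that $F$ is a $\dNis$-sheaf, together with the compatibility of the homotopy $t$-structure on $\lDA(k,\Lambda)$ with the standard $t$-structure on the derived category of presheaves provided by Theorem \ref{tstructure}.
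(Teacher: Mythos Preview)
Your proposal is correct and follows essentially the same route as the paper. The only difference is cosmetic: the paper applies Corollary~\ref{cor;purity} (the sheafified purity statement $a_{\dNis}H_0G(X)\hookrightarrow H_0G(\eta_X,\triv)$) directly to a $\dNis$-fibrant replacement $G$ of $F[0]$, whereas you perform the reduction to henselian local schemes by hand and then invoke Theorem~\ref{thm;purity}; since Corollary~\ref{cor;purity} is itself proved by exactly that stalkwise reduction, you are simply inlining it. Your worry about $H_0(C(Y))=F(Y)$ is resolved exactly as the paper does: because $F$ is strictly $\bcube$-invariant, $F[0]$ is already $\bcube$-local, so any $\dNis$-fibrant replacement is automatically $(\dNis,\bcube)$-fibrant and computes $H_0(C(Y))=\mathbf{H}^0_{\dNis}(Y,F)=F(Y)$.
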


In \cite{bpo}, a comparison functor
\[R^{\bcube}\omega^*\colon \DM(k, \Lambda) \to \lDM(k, \Lambda)\] 
has been constructed. Under resolution of singularities, it is known that $R^{\bcube}\omega^*$ is fully faithful, and it identifies $\DM(k,\Lambda)$ with the subcategory of $(\A^1, \triv)$-local objects in $\lDM(k, \Lambda)$ (see \cite[Thm. 8.2.16]{bpo} and the results quoted there). Even without knowing that  $R^{\bcube}\omega^*$  is a full embedding, we can show that it is $t$-exact with respect to the  homotopy $t$-structures on both sides. In fact, when  $R^{\bcube}\omega^*$  is an embedding, it is straightforward to conclude that Voevodsky's homotopy $t$-structure is induced by the $t$-structure on $\lDM(k, \Lambda)$ via $R^{\bcube}\omega^*$. See Prop. \ref{prop;omeganis}.
\medskip

The good properties of the category of strictly $\bcube$-invariant sheaves $\CI^{\rm ltr}_{\dNis}$, deduced from the identification with the heart of the homotopy $t$-structure, allow us to make a further comparison with the category $\RSC_{\Nis}$ of \emph{reciprocity sheaves} of Kahn--Saito--Yamazaki. This is an abelian subcategory of the category of Nisnevich sheaves with transfers $\Shv_{\Nis}^{\tr}(k)$, whose objects satisfy a certain restriction on their sections inspired by the Rosenlicht--Serre theorem on reciprocity for morphisms from curves to commutative algebraic groups \cite[III]{Serre-GACC}. See \cite{KSY2} and the recollection paragraph below. 

In \cite{shuji}, S.\ Saito constructed an exact and fully faithful functor 
\begin{equation}\label{eq:shujifunctorintro}
\Log:\RSC_{\Nis}(k)\to \Shv_{\dNis}^{\rm ltr}(k,\Z)
\end{equation}
having as essential image a subcategory of $\CI^{\rm ltr}_{\dNis}$. In Section \ref{sec:ApplRSC} we study its pro-left adjoint $\Rsc\colon \Shv_{\dNis}^{\rm ltr}(k,\Z)\to \pro\RSC_{\Nis}$ and in particular its behavior with respect to the lax symmetric monoidal structure $(-, -)_{\RSC_{\Nis}}$ constructed in \cite{RSY}. 
See Theorem \ref{thm:natmapRSCtensor} and Corollary \ref{cor;monoidalfunctor}.

The category of reciprocity sheaves $\RSC_{\Nis}$ is defined in terms of the auxiliary category of \emph{modulus pairs}, building block of the theory of motives with modulus as developed in \cite{KMSY1}, \cite{KMSY2} and \cite{KMSY3}. In fact, Saito's functor \eqref{eq:shujifunctorintro} is itself defined by first ``lifting'' a reciprocity sheaf to the category of (semipure) sheaves on modulus pairs, and then applying another functor landing in $\Shv_{\dNis}^{\rm ltr}(k,\Z)$. It turns out that such detour may not be necessary.

In fact, we can look at the composite functor
\begin{equation}\label{eq:defLogRecintro}
\begin{tikzcd}\omega_{\CI}^{\log}\colon\CIltr_{\dNis} \arrow[r, hook, "i^{tr}"] & \Shltr(k, \Z) \arrow[r, "\omega_{\sharp}"] &\Shv^{\tr}_{\Nis}(k, \Z)
\end{tikzcd}
    \end{equation}
where $\omega_{\sharp}$ is the left  Kan extension of the restriction functor from smooth log schemes to smooth $k$-schemes $\omega\colon\lSm(k) \to \Sm(k)$, sending $X\in \lSm(k)$ to $X^{o}$, the open subscheme of the underlying scheme $\ul{X}$ of $X$ where the log structure is trivial. Using a comparison result from \cite{bpo} (which relies on the resolution of singularities) and our purity Theorem \ref{nonderivedpurity} we can show that $\omega_{\CI}^{\log}$ in \eqref{eq:defLogRecintro} is faithful and exact (Proposition \ref{lem:omegalowerCIffexact}). If we assume that furthermore it is full (see Conjecture \ref{conj:full}), we will denote by $\LogRec$ its essential image: if Conjecture \ref{conj:full} holds, it is a Grothendieck abelian category, that contains $\RSC_{\Nis}$ as full subcategory, see Theorem \ref{prop:RSCNissubcatLogRec}. Thanks to the purity property for strictly $\bcube$-invariant sheaves, its objects satisfy global injectivity, i.e. for every $F\in \LogRec$ and $U\subset X$ dense open subset of $X\in \Sm(k)$, the restriction map
\[F(X) \hookrightarrow F(U)\]
is injective. See  \cite{KSY2} for a similar statement for reciprocity sheaves (relying on \cite{S-purity}). In fact, this would imply that the cohomology presheaves of any reciprocity sheaf $F\in \RSC_{\Nis}$ satisfy global injectivity, see Corollary \ref{cor:globalinjcohoRSC}.


\vskip .5cm

\noindent\emph{Acknowledgements.}
The authors would like to thank Joseph Ayoub for a careful reading of a preliminary version of this manuscript, and for suggesting an improvement that allowed us to weaken the original assumptions on Theorem \ref{thm;purity}. We are also grateful to Kay R\"ulling for useful comments, to  Doosung Park for many  conversations on the subject of this paper, and to Shuji Saito for his interest in our work. Finally, we thank the referee for their detailed report.

\vskip .5cm

\subsection*{Notations and recollections on log geometry}
In the whole paper we fix a perfect base field $k$ and a commutative unital ring of coefficients $\Lambda$. Let $S$ be a Noetherian fine and saturated (fs for short) log scheme. We denote by ${\lSm}(S)$ the category of fs log smooth log schemes over $S$. We are typically interested in the case where $S = \Spec(k)$, considered as a log scheme with trivial log structure. 

For $X\in {\lSm}(S)$, we write $\ul{X}\in \mathbf{Sch}(\ul{S})$ for the underlying $\ul{S}$-scheme, where $\ul{S}$ is the scheme underlying $S$. We also write $\partial X$ for the (closed) subset of $\ul{X}$ where the log structure of $X$ is not trivial. Let $\SmlSm(S)$ be the full subcategory of $\lSm(S)$ having for objects $X\in \lSm(S)$ such that $\ul{X}$ is smooth over $\ul{S}$. By e.g.\ \cite[A.5.10]{bpo}, if $X\in \SmlSm(k)$, then $\partial X$ is a strict normal crossing divisor  on $\ul{X}$ and the log scheme $X$  is isomorphic to $(\ul{X}, \partial {X})$, i.e. to the compactifying log structure associated to the open embedding $(\ul{X}\setminus \partial X) \to \ul{X}$. If $X, Y\in \lSm(S)$, we will write $X\times_S Y$ for the fiber product of $X$ and $Y$ over $S$ computed in the category of fine and saturated log schemes: it exists by \cite[Cor. III.2.1.6]{ogu} and it is again an object of $\lSm(S)$ using \cite[Cor. IV.3.1.11]{ogu}. Unless $S$ has trivial log structure, the underlying scheme $\ul{X\times_S Y}$ does not agree with $\ul{X}\times_{\ul{S}} \ul{Y}$. See \cite[\S III.2.1]{ogu} for more details.

We denote by $\PShlog(S, \Lambda)$ the category of presheaves of $\Lambda$ modules on $\lSm(S)$. It has naturally the structure of closed monoidal category. If $\tau$ is a Grothendieck topology on $\lSm(S)$ (see below), we write $\mathbf{Shv}^{\rm log}_\tau(S, \Lambda)$ for full subcategory of $\PShlog(S, \Lambda)$ consisting of $\tau$-sheaves. We typically write $a_\tau$ for the $\tau$-sheafification functor.

Let $\widetilde{\SmlSm}(S)$ be the category of fs log smooth $S$-schemes $X$ which are essentially smooth over $S$, i.e. $X$ is a limit $\lim_{i \in I} X_i$ over a filtered set $I$, 
where $X_i \in \SmlSm(S)$ and all transition maps are strict \'etale (i.e. they are strict maps of log schemes such that the underlying maps $f_{ij}\colon \ul{X}_i\to \ul{X}_j$ are \'etale)

For $(\underline{X},\partial X)\in \SmlSm(S)$ and $x\in \underline{X}$, let $\iota\colon\Spec(\mathcal{O}_{X,x})\to \underline{X}$, be the canonical morphism. Then the local log scheme $(\Spec(\mathcal{O}_{X,x},\iota^*(\partial X))$ is in $\widetilde{\SmlSm}(S)$.

We frequently allow $F\in \PShlog(S,\Lambda)$ to take values on objects of $\widetilde{\SmlSm}(S)$ by setting
$F(X) := \colim_{i \in I} F(X_i)$ for $X$ as above.

\subsection*{Notations and recollections on reciprocity sheaves}

We briefly  recall some terminology and notations from the theory of modulus sheaves with transfers,
		see \cite{KMSY1}, \cite{KMSY2}, \cite{KSY2}, and \cite{S-purity} for details.
		
		A modulus pair $\sX=(\ol{X}, X_\infty)$ consists of 
		a separated $k$-scheme of finite type  $\ol{X}$ and an effective (or empty) Cartier divisor $X_\infty$
		such that $X:= \ol{X}\setminus |X_\infty|$ is smooth; it is called {\em proper} if $\ol{X}$ is proper over $k$.
		Given  two modulus pairs $\sX=(\ol{X}, X_\infty)$ and $\sY=(\ol{Y}, Y_\infty)$, with opens 
		$X:=\ol{X}\setminus |X_\infty|$ and $Y:=\ol{Y}\setminus |Y_\infty|$,  an
		{admissible left proper prime correspondence} from $\sX$ to $\sY$ is given by
		an integral closed subscheme $Z\subset X\times Y$ which is finite and surjective over a connected component of $X$,
		such that the normalization of its closure $\ol{Z}^N\to \ol{X}\times \ol{Y}$ is proper over $\ol{X}$ and satisfies
	\[ X_{\infty|\ol{Z}^N}\ge Y_{\infty|\ol{Z}^N},\]
		as Weil divisors on $\ol{Z}^N$, where $X_{\infty|\ol{Z}^N}$ (resp. $Y_{\infty|\ol{Z}^N}$) denotes the pullback of $X_\infty$ (resp. $Y_\infty$) to $\ol{Z}^N$. The free abelian group generated by such correspondences is denoted by $\ulMCor(\sX, \sY)$.
		By \cite[Propositions 1.2.3, 1.2.6]{KMSY1}, modulus pairs and left proper admissible correspondences define an additive category that
		we denote by $\ulMCor$. We write $\MCor$ for the full subcategory of  $\ulMCor$  whose objects are proper modulus pairs. We denote by $\tau$ the inclusion functor $\tau\colon \MCor \to \ulMCor$.
        
        We write $\mathbf{MPST}$ for the category of additive presheaves on $\MCor$ and $\mathbf{\ul{M}PST}$ for the category of additive presheaves on $\ulMCor$.  
		
		

Let $\PSh^{\tr}(k)$ be Voevodsky's category of presheaves with transfers. Recall from \cite[Def. 1.34]{S-purity} that $F\in \PSh^{\tr}(k)$ has reciprocity if for any $X\in \Sm(k)$ and $a\in F(X) = \Hom_{\PSh^{\tr}}(\Z_{\rm tr}(X), F)$, there exists $\sX=(\ol{X}, X_\infty)\in {\bf MSm}(X)$ such that the map $\tilde{a}\colon \Z_{\rm tr}(X)\to F$ corresponding to the section $a$  factors through $h_0(\sX)$. Here ${\bf MSm}(X)$ is the category of objects $\sX \in \MCor$ such that $\ol{X}-|X_\infty| = X$, and $h_0(\sX)$ is the presheaf defined as
\[h_0(\sX)(Y) = \Coker(\ulMCor(Y\tensor \bcube, \sX)\xrightarrow{i_0^*-i_1^*} \mathbf{Cor}(Y,X)),\]
where $\bcube=(\P^1, \infty)$ (we will use the same notation for the log scheme in $\lSm(k)$), and the tensor product refers to the monoidal structure in $\ulMCor$, see \cite{KMSY1}.
It is easy to see that $\RSC$ is an abelian category, closed under sub-objects and quotients in $\PSh^{\tr}(k)$. On the other hand, it is a theorem \cite[Thm. 0.1]{S-purity} that $\RSC_{\Nis} = \RSC\cap \NST$ is also abelian, where $\NST = \Shv_{\Nis}^{\rm tr}(k)$ is the category of Nisnevich sheaves with transfers.

\section{Preliminaries on logarithmic motives}\label{sec:preliminaries} In this Section we review the construction and the basic properties of the categories $\lDM(k, \Lambda)$ and $\lDA(S, \Lambda)$ of motives, with and without transfers, as introduced in \cite{bpo}. The standard reference for properties of log schemes is \cite{ogu}. The definitions in this section work for a quite general base log scheme $S$, but in the rest of the paper we will mostly deal with the case $S=\Spec(k)$.
\subsection{Topologies on logarithmic schemes} Recall from \cite[3.1.4]{bpo} that a cartesian square of fs log schemes
\[Q = 
\begin{tikzcd}
    Y' \ar[r, "g'"]\ar[d, "f'"] & Y \ar[d, "f"]\\
    X' \ar[r, "g"] & X
\end{tikzcd}
\]
is a \emph{strict Nisnevich distinguished square} if $f$ is strict \'etale, $g$ is an open immersion and $f$ induces an isomorphism $f^{-1}(\ul{X} - g(\ul{X}'))\xrightarrow{\sim} \ul{X} - g(\ul{X}')$ for the reduced scheme structures. We say that $Q$ is a \emph{dividing distinguished square} (or \emph{elementary dividing square}) if $Y'=X'=\emptyset$ and $f$ is a surjective proper log \'etale monomorphism. According to \cite[A.11.9]{bpo}, surjective proper log \'etale monomorphisms are precisely the log modifications, in the sense of F.\ Kato \cite{FKato}. We similarly say that $Q$ is a \emph{(strict) Zariski distinguished square} if $f$ and $g$ are (strict) open immersions (note that ``strict'' here is redundant, since open immersions in the category of log schemes are automatically strict).

\begin{defn}The strict Nisnevich cd-structure (resp.\ the dividing cd-structure) is the cd structure on $\lSm(S)$ associated to the collection of strict Nisnevich distinguished squares (resp.\ of elementary dividing squares), and the dividing Nisnevich cd structure is the union of the strict Nisnevich and of the dividing cd-structures.     

The associated Grothendieck topologies on $\lSm(S)$ are called the \emph{strict Nisnevich} and the \emph{dividing Nisnevich} topology respectively. \emph{Mutatis mutandis}, we define the (strict) Zariski and the dividing Zariski topologies on $\lSm(S)$ in a similar fashion.
\end{defn}

We write $\Shv_\tau^{\rm log}(S, \Lambda)$ for the category of $\tau$ sheaves of $\Lambda$-modules on $\lSm(S)$, where $\tau$ is one of the above-defined topologies. The inclusion  $\Shv_\tau^{log}(S, \Lambda) \subset \PSh^{\rm log}(S, \Lambda) =\PSh(\lSm(S), \Lambda)$ has an exact left adjoint, $a_\tau$.

Let $S$ be a Noetherian fs log scheme such that $\ul{S}$ has finite Krull dimension. According to \cite[Prop. 3.3.30]{bpo}, the strict Nisnevich and the dividing Nisnevich cd structures on $\lSm(S)$ are complete, regular and quasi-bounded with respect to the dividing density structure (\cite[Def. 3.3.22]{bpo}). In particular, any $X\in \lSm(S)$ has finite cohomological dimension. When $S=\Spec(k)$, we can bound the $dNis$ cohomological dimension by the Krull dimension of the underlying scheme, according to the following Proposition. 
\begin{prop}(see \cite[Cor. 5.1.4]{bpo})
\label{cohdim}Let 
$F\in \Shv_{\dNis}^{\rm log}(k,\Lambda)$ and let  $X\in \lSm(k)$. Let $d= \dim(\ul{X})$. Then $\mathbf{H}^i_{\dNis}(X,F_X)=0$ for $i\geq d+1 $.
\end{prop}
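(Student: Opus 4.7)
The plan is to derive this bound as a formal consequence of Voevodsky's general machinery relating cd-structures to cohomological dimension via density structures, with the crucial structural input already recorded immediately before the statement: by \cite[Prop.~3.3.30]{bpo} the dividing Nisnevich cd-structure on $\lSm(S)$ is complete, regular, and quasi-bounded with respect to the dividing density structure of \cite[Def.~3.3.22]{bpo}.

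First I would apply the following general principle (Voevodsky's bound for completely decomposable topologies, in the quasi-bounded form used in \cite{bpo}): for a complete, regular, quasi-bounded cd-structure on a site with associated topology $\tau$ and density structure $D$, the $\tau$-cohomology of any object $X$ vanishes strictly above the $D$-dimension of $X$. Applied to $\lSm(k)$ with the dividing Nisnevich topology, this immediately yields $\mathbf{H}^i_{\dNis}(X, F_X) = 0$ for every $i > \dim_D(X)$, where $D$ is the dividing density structure on $\lSm(k)$.

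The remaining step is to verify the inequality $\dim_D(X) \leq d = \dim(\ul{X})$ for $X \in \lSm(k)$. By the construction of the dividing density structure, a $D$-dense chain of length $n$ in $X$ is assembled from two types of moves: strict open (respectively strict \'etale Nisnevich) shrinkings, whose maximal chain length is controlled exactly by the Krull codimension of the image in $\ul{X}$, just as in the classical Nisnevich density structure on $\Sm(k)$; and log modifications, which are birational on the underlying scheme and in particular preserve the generic points of $\ul{X}$, hence contribute nothing to the density filtration. Combining these bounds gives $\dim_D(X) \leq d$, and together with the previous step this produces the desired vanishing $\mathbf{H}^i_{\dNis}(X, F_X) = 0$ for $i \geq d+1$.

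The main obstacle is really packaged into \cite[Prop.~3.3.30]{bpo}: the dividing cd-structure is not bounded in the strict sense of Voevodsky's original paper on completely decomposable topologies, because a single log modification can require refinement by a cofiltered tower of further dividing blow-ups, so one must genuinely use the quasi-bounded variant of the theorem. Once that infrastructure is granted and the comparison of density dimensions carried out, the deduction of the cohomological vanishing bound is formal.
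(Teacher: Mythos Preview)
Your overall strategy matches the approach sketched in the paper's Remark immediately following the statement (the paper itself defers the proof to \cite[Cor.~5.1.4]{bpo}): bound the $\dNis$-cohomological dimension by the dividing density dimension via the quasi-bounded cd-structure machinery, then identify the latter with the Krull dimension.

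There is, however, a genuine gap in your second step. Your argument that log modifications ``contribute nothing to the density filtration'' because they are birational on the underlying scheme would apply verbatim over any base $S$. But as the paper explicitly records (citing \cite[Ex.~3.3.25]{bpo}), for a general fs log scheme $X\in\lSm(S)$ the dividing density dimension can be \emph{strictly larger} than $\dim(\ul{X})$. So birationality alone cannot be the mechanism. The equality $\dim_D(X)=\dim(\ul{X})$ is specific to the case $S=\Spec(k)$ and uses the content of \cite[Rmk.~3.3.27]{bpo}: over a field, log smooth fs log schemes admit toric local models, and the combinatorics of fans then controls the dividing density chains in a way that collapses the extra contribution. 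Your heuristic about preserving generic points does not see this, and in fact the dividing density structure is not defined purely in terms of codimension of points in $\ul{X}$. You should replace your informal argument for $\dim_D(X)\le d$ with a direct appeal to \cite[Rmk.~3.3.27]{bpo} (or reproduce its toric argument), and note that the passage from the cd-structure to the cohomological bound also uses that the dividing Nisnevich cd-structure is squareable, so that \cite[Thm.~3.4.6]{bpo} applies.
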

\begin{remark}Since the dividing Nisnevich cd-structure is clearly squareable in the sense of \cite[Def. 3.4.2]{bpo}, one can apply \cite[Theorem 3.4.6]{bpo} to get a bound on the $dNis$ cohomological dimension for any $X\in \lSm(S)$ in terms of the dimension of a log scheme computed using the dividing density structure: this is, for a general log scheme $X$, larger than the Krull dimension of the underlying scheme $\ul{X}$ (see \cite[Ex. 3.3.25]{bpo}). In view of \cite[Rmk. 3.3.27]{bpo}, for $S=\Spec(k)$ and $X\in \lSm(k)$ such dimension agrees with the Krull dimension.
\end{remark}
The dividing Nisnevich cohomology groups are, a priori, difficult to compute.  The situation looks better for $X\in \SmlSm(k)$ thanks to the following result.
\begin{thm}\cite[Theorem 5.1.8]{bpo} Let $C$ be a bounded below complex of strict Nisnevich sheaves on $\SmlSm(k)$. Then for every $X\in \SmlSm(k)$ and $i\in \Z$ there is an isomorphism 
\begin{equation}
\label{eq:cohomologydNiscolimit}
\mathbf{H}^i_{\dNis}(X, a_{\dNis} C) = \colim_{Y \in X_{\rm div}^{Sm}} \mathbf{H}^i_{\sNis}(Y, C)
\end{equation}
where $X_{\rm div}^{Sm}$ is the category of smooth log modifications $Y\to X$ of $X$. 
\end{thm}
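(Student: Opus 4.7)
The plan is to exploit the fact that, restricted to $\SmlSm(k)$, the dividing Nisnevich topology can be controlled by smooth log modifications, and to reduce to a sheaf-theoretic cofinality statement.

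First, I would reduce to the case where $C = F[0]$ is concentrated in a single degree. Since $C$ is bounded below and the dNis cohomological dimension is finite (Proposition \ref{cohdim}), the hypercohomology spectral sequence $E_2^{p,q} = \mathbf{H}^p_{\dNis}(X, a_{\dNis} H^q(C)) \Rightarrow \mathbf{H}^{p+q}_{\dNis}(X, a_{\dNis} C)$ converges, and similarly for the strict Nisnevich side. Since filtered colimits are exact and commute with (bounded) sNis cohomology, comparing the two spectral sequences reduces the problem to proving the statement for each cohomology sheaf $F = H^q(C)$ separately, i.e. to showing
\[
\mathbf{H}^i_{\dNis}(X, a_{\dNis} F) \;\cong\; \colim_{Y \in X_{\rm div}^{Sm}} \mathbf{H}^i_{\sNis}(Y, F)
\]
for every strict Nisnevich sheaf $F$.

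Next, I would treat the case $i = 0$. The key observation is that $X_{\rm div}^{Sm}$ is a filtered category (two smooth log modifications admit a common smooth refinement via further log modifications, which exist by the combinatorics of fans and toroidal subdivisions), and it is cofinal in the category of all dividing covers of $X$: every log modification of $X$ can be dominated by a smooth one. Since dNis-sheafification factors as first sNis-sheafifying (which is unchanged, as $F$ is already sNis) and then dividing-sheafifying, and since dividing sheafification on $\SmlSm(k)$ is computed as the filtered colimit over the category of smooth log modifications, one obtains the isomorphism in degree $0$.

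For higher $i$, the plan is to use the machinery of cd-structures: by Proposition 3.3.30 of \cite{bpo}, the dividing Nisnevich cd-structure on $\SmlSm(k)$ is complete, regular, and quasi-bounded, so dNis-hypercohomology on $X$ is computed by Čech hypercovers built out of the cd-structure. Any such hypercover can be refined, using the cofinality of smooth log modifications, by one obtained by pulling back an sNis-hypercover of some $Y \in X_{\rm div}^{Sm}$. Applying $F$ to this refinement and taking the filtered colimit over $Y$ exchanges colimit and cohomology (exactness of filtered colimits, plus the finite cohomological dimension bound), yielding the desired equality. The main obstacle is the cofinality step: one must argue carefully that any dNis-hypercover of $X$ in $\SmlSm(k)$ can be refined by one of the form ``sNis-hypercover of a smooth log modification,'' which relies on controlling iterated compositions of dividing and strict Nisnevich covers within the smooth locus, hence on the existence of enough smooth log modifications. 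Once this combinatorial point is settled, the cohomology identity follows by passing to filtered colimits.
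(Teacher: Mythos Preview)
The paper does not prove this statement: it is quoted verbatim as \cite[Theorem 5.1.8]{bpo}, so there is no proof in the present paper to compare against. The companion statement Theorem~\ref{thm:cohomologycdNiscolimitS} (which is \cite[Theorem 5.1.2]{bpo}) is likewise only cited. Any actual proof lives in \cite{bpo}.

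That said, a few remarks on your sketch. Your spectral-sequence reduction to a single sheaf is sound, and you have correctly isolated the key combinatorial input: toric resolution of singularities guarantees that every log modification of $X\in\SmlSm(k)$ is dominated by a smooth one, so $X_{\rm div}^{Sm}$ is filtered and cofinal in $X_{\rm div}$. However, your treatment of higher degrees is too loose to stand as a proof. The sentence ``any dNis-hypercover of $X$ can be refined by one of the form `sNis-hypercover of a smooth log modification'\,'' is exactly the nontrivial content, and you do not indicate how to carry it out; iterated compositions of strict \'etale maps and log modifications do not obviously rearrange into that shape. A cleaner route, and the one implicit in how this paper organizes the two citations, is to first take Theorem~\ref{thm:cohomologycdNiscolimitS} as given (the colimit over \emph{all} log modifications), and then invoke cofinality of $X_{\rm div}^{Sm}\subset X_{\rm div}$ to replace the indexing category. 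That turns the whole argument into a one-line cofinality step once the harder Theorem~5.1.2 is in hand, and avoids the hypercover-refinement issue entirely.
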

A formula similar to \eqref{eq:cohomologydNiscolimit} holds for $X\in \lSm(S)$ as in the following Theorem.
\begin{thm}\cite[Theorem 5.1.2]{bpo} \label{thm:cohomologycdNiscolimitS}
    Let $S$ be a Noetherian fs log scheme, and let $C$ be a bounded below complex of strict Nisnevich sheaves on $\lSm(S)$. Then for every $X\in \lSm(S)$ and $i\in \Z$ there is an isomorphism 
    \[\mathbf{H}^i_{dNis}(X, a_{dNis} C) = \colim_{Y \in X_{\rm div}} \mathbf{H}^i_{sNis}(X, C)
    \]
where the colimit runs over the set $X_{\rm div}$ of log modifications of $X$ (not necessarily smooth).
\end{thm}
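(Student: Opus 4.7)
The plan is to exploit the factorization $\dNis = \sNis \vee \text{div}$ of the dividing Nisnevich topology together with the cofilteredness of the category $X_{\rm div}$ of log modifications of $X$, mimicking the strategy behind Theorem 2.2 but working over a general base.

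\medskip

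First I would verify that, for any fixed $X\in \lSm(S)$, the category $X_{\rm div}$ is cofiltered. Given two log modifications $f_1\colon Y_1\to X$ and $f_2\colon Y_2\to X$, the fs fiber product $Y_1\times^{\rm fs}_X Y_2$ is again a log modification of $X$, since surjective proper log \'etale monomorphisms are stable under fs base change and composition (see e.g. \cite[A.11.9]{bpo}). Parallel arrows between log modifications over $X$ are equalized automatically, as log modifications are monomorphisms. Hence the opposite category $X_{\rm div}^{\rm op}$ is filtered.

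\medskip

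Second I would compute the cohomology of the dividing topology in isolation. Because $X_{\rm div}$ is cofiltered, every dividing hypercover of $X$ admits a refinement by the constant simplicial object associated to a single log modification $Y\to X$, and the \v{C}ech cohomology in degree $\geq 1$ consequently vanishes. One obtains, for any presheaf $F$ on $\lSm(S)$,
\[
H^0_{\rm div}(X, F) \;=\; \colim_{Y \in X_{\rm div}} F(Y), \qquad H^i_{\rm div}(X, F) = 0 \quad \text{for } i>0,
\]
so that $a_{\rm div}$ is exact and its higher derived functors vanish on sections.

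\medskip

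Third I would apply the composition-of-topologies spectral sequence. Since both the strict Nisnevich and the dividing cd-structures on $\lSm(S)$ are complete, regular and quasi-bounded for the dividing density structure (Proposition 3.3.30 of \cite{bpo}), and since $\dNis$ is their join, there is a convergent Grothendieck-Leray spectral sequence
\[
E_2^{p,q} \;=\; H^p_{\rm div}\bigl(X,\, a_{\rm div}\mathcal{H}^q_{\sNis}(C)\bigr) \;\Longrightarrow\; \mathbf{H}^{p+q}_{\dNis}(X, a_{\dNis} C),
\]
where $\mathcal{H}^q_{\sNis}(C)$ denotes the presheaf $U\mapsto \mathbf{H}^q_{\sNis}(U, C)$; the boundedness from below of $C$ guarantees the standard convergence of the hypercohomology spectral sequence. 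By the second step only the column $p=0$ survives, yielding the desired isomorphism
\[
\mathbf{H}^i_{\dNis}(X, a_{\dNis} C) \;\cong\; \colim_{Y\in X_{\rm div}} \mathbf{H}^i_{\sNis}(Y, C).
\]

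\medskip

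The main obstacle is the rigorous setup of the composition-of-topologies spectral sequence in this cd-structure context, and the interchange of strict Nisnevich hypercohomology with the filtered colimit along $X_{\rm div}^{\rm op}$: this last point uses that each log modification $Y\to X$ is proper and of finite cohomological dimension, so that strict Nisnevich covers of $Y$ can be refined compatibly under further log modifications. Once these technical ingredients are in place, the argument is formal.
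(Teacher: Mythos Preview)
The paper does not prove this statement: it is quoted verbatim from \cite[Theorem~5.1.2]{bpo} and no argument is given. So there is no ``paper's own proof'' to compare against.

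Your sketch has the right ingredients --- cofilteredness of $X_{\rm div}$ and the vanishing of higher dividing cohomology --- and these are indeed the essential points behind the result in \cite{bpo}. However, the spectral sequence you write down,
\[
E_2^{p,q} = H^p_{\rm div}\bigl(X, a_{\rm div}\mathcal{H}^q_{\sNis}(C)\bigr) \Longrightarrow \mathbf{H}^{p+q}_{\dNis}(X, a_{\dNis}C),
\]
is not a standard Grothendieck--Leray spectral sequence: the $\dNis$ topology is the \emph{join} of $\sNis$ and the dividing topology, not a composition of morphisms of sites in the sense that produces such a spectral sequence automatically. You flag this yourself as ``the main obstacle'', and it is a genuine one: making this precise requires knowing that $\sNis$-covers and dividing covers commute in a suitable sense (e.g.\ that the pullback of a log modification along a strict \'etale map is again a log modification, and conversely that strict \'etale covers lift along log modifications), which is exactly the technical content of the proof in \cite{bpo}.

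A cleaner way to package the same ideas, avoiding the spectral sequence, is to show directly that if $I$ is an injective $\sNis$-sheaf then $a_{\dNis}I$ is $\dNis$-acyclic and $(a_{\dNis}I)(X) = \colim_{Y\in X_{\rm div}} I(Y)$. Granting this, take a $\sNis$-injective resolution $C\to I^\bullet$, apply $a_{\dNis}$ and $\Gamma(X,-)$, and use exactness of filtered colimits to conclude. The acyclicity claim is where the cofilteredness of $X_{\rm div}$ and the commutation of the two classes of covers enter.
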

The following result comes in handy to produce long exact sequences:
\begin{lemma}\label{lm;mayervietoris}
Let $X,Y\in \SmlSm$, let $D_X\subseteq \ul{X}$ and $D_Y\subseteq Y$ be Cartier divisors such that $D_X + |\partial X|$ and $D_Y + |\partial Y|$ have simple normal crossings.

Suppose that\[
\begin{tikzcd}
\ul{X}-D_X\ar[r]\ar[d]&\ul{X}\ar[d]\\
\ul{Y}-D_Y\ar[r]&\ul{Y}
\end{tikzcd}
\]
is a $\Zar$- (resp. $\Nis$-) distinguished square in $\Sm$. Let $\partial X^+$ and $\partial Y^+$ be the log structures induced by the divisors $D_X + |\partial X|$ and $D_X + |\partial Y|$, and let $X^+:=(\ul{X},\partial X^+)$ and $Y^+:=(\ul{Y},\partial Y^+)$. Then, for every complex $C\in \PShltr(k,\Lambda)$ which is $\sZar$- (resp. $\sNis$-) fibrant the following square\[
\begin{tikzcd}
C(X)\ar[r]\ar[d]&C(X^+)\ar[d]\\
C(Y)\ar[r]&C(Y^+)
\end{tikzcd}
\]
is a homotopy pullback.

\begin{proof}
	Let $\tau$ be either $\Zar$ or $\Nis$. Since the log structures on $X-D_X$ (resp $Y-D_Y$) induced by $X$ and $X^+$ (resp. $Y$ and $Y^+$) are the same, the following squares are $s\tau$-distinguished:\[
	\begin{tikzcd}
	X-D_X\ar[r]\ar[d]&X\ar[d]&&X-D_X\ar[r]\ar[d]&X^+\ar[d]\\
	Y-D_Y\ar[r]&Y&&Y-D_Y\ar[r]&Y^+
	\end{tikzcd}
	\]
	Moreover, the canonical maps $X^+\to X$ and $Y^+\to Y$, whose underlying maps of schemes are the identities of $\ul{X}$ and $\ul{Y}$, make the following diagram commutative: \[
	\begin{tikzcd}
	C(X)\ar[r]\ar[d]&C(X^+)\ar[r]\ar[d]&C(X-D_X)\ar[d]\\
	C(Y)\ar[r]&C(Y^+)\ar[r]&C(Y-D_X)
	\end{tikzcd}
	\]
	Since $C$ is $s\tau$-fibrant, the big rectangle and the square on the right are homotopy pullbacks. Hence, the square on the left is a homotopy pullback.
	\end{proof}
\end{lemma}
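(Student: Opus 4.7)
The plan is to exhibit the target square as one side of a horizontal paste of two squares of complexes, each of which is a homotopy pullback by the $s\tau$-fibrancy of $C$ applied to a suitable $s\tau$-distinguished square in $\lSm(k)$. Once both auxiliary squares are in place, the conclusion should follow from the standard pasting lemma for homotopy pullbacks (if the outer rectangle and one inner square are homotopy Cartesian, so is the other inner square).

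To produce the auxiliary squares, I would first observe that, since $D_X+|\partial X|$ is a simple normal crossings divisor on $\ul{X}$, the two log structures on $\ul{X}-D_X$ inherited from $X=(\ul{X},\partial X)$ and from $X^+=(\ul{X},\partial X+D_X)$ coincide, because $D_X$ itself has been excised. Symmetrically for $Y-D_Y$. Combined with the hypothesis that the square of underlying schemes
\[
\begin{tikzcd}
\ul{X}-D_X\ar[r]\ar[d] & \ul{X}\ar[d]\\
\ul{Y}-D_Y\ar[r] & \ul{Y}
\end{tikzcd}
\]
is $\tau$-distinguished in $\Sm(k)$, this should produce two $s\tau$-distinguished squares in $\lSm(k)$: one with right column $X,Y$ and one with right column $X^+,Y^+$, both having $X-D_X$ and $Y-D_Y$ as the left column.

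Given these two squares, I would paste them horizontally:
\[
\begin{tikzcd}
C(X)\ar[r]\ar[d] & C(X^+)\ar[r]\ar[d] & C(X-D_X)\ar[d]\\
C(Y)\ar[r] & C(Y^+)\ar[r] & C(Y-D_Y).
\end{tikzcd}
\]
By $s\tau$-fibrancy of $C$ applied to the two auxiliary $s\tau$-distinguished squares, both the rightmost small square and the outer rectangle are homotopy pullbacks of complexes of $\Lambda$-modules. The pasting lemma then immediately forces the left-hand square to be a homotopy pullback, which is precisely the assertion of the lemma.

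The one genuine step is the log-geometric verification that the two auxiliary squares are in fact $s\tau$-distinguished in $\lSm(k)$; this is where the simple normal crossings hypothesis on $D_X+|\partial X|$ (resp.\ $D_Y+|\partial Y|$) is essential, because it is what guarantees that the restriction of $\partial X^+$ to $\ul{X}-D_X$ really is $\partial X|_{\ul{X}-D_X}$ (so that the two descriptions of $X-D_X$ and $Y-D_Y$ agree) and that the maps on the underlying schemes lift to strict maps of log schemes. All remaining content is formal homotopy-pullback pasting, so I expect this log-structural bookkeeping to be the only non-trivial step.
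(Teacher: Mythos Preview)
Your proposal is correct and follows essentially the same approach as the paper: both identify the two $s\tau$-distinguished squares obtained by noting that $X-D_X$ (resp.\ $Y-D_Y$) carries the same log structure whether viewed inside $X$ or $X^+$ (resp.\ $Y$ or $Y^+$), paste the resulting squares after applying $C$, and conclude via the pasting lemma for homotopy pullbacks. One small clarification: the simple normal crossings hypothesis is primarily needed to ensure that $X^+$ and $Y^+$ lie in $\SmlSm(k)$; the agreement of log structures on the open complements is automatic once $D_X$ is removed.
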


\subsection{log correspondences} Following \cite{bpo}, we denote by $\lCor(k)$ the category of finite log correspondences over $k$. It is a variant of the Suslin--Voevodsky category of finite correspondences $\Cor(k)$ introduced in \cite{V-TCM}, see \cite{MVW}. It has the same objects as $\lSm(k)$, and morphisms are given by the free abelian subgroup
\[ \lCor(X,Y) \subseteq  \Cor(X- \partial X, Y- \partial Y)\]
generated by elementary correspondences  $V^o\subset (X- \partial X) \times (Y- \partial Y)$ such that the closure $V\subset \ul{X}\times \ul{Y}$ is finite and surjective over (a component of) $\ul{X}$ and such that there exists a morphism of log schemes $V^N \to Y$, where $V^N$ is the fs log scheme whose underlying scheme is the normalization of $V$ and whose log structure is given by the inverse image log structure along the composition $\underline{V^N} \to \ul{X}\times \ul{Y} \to \ul{X}$. See \cite[2.1]{bpo} for more details, and for the proof that this definition gives indeed a category. 

Additive presheaves (of $\Lambda$-modules) on the category $\lCor(k)$ will be called \emph{presheaves (of $\Lambda$-modules) with log transfers}. Write $\PShltr(k, \Lambda)$ for the resulting category. We have a natural adjunction 
\[ 
\begin{tikzcd}
\PShlog(k,\Lambda)\arrow[rr,shift left=1.5ex,"\gamma_\sharp" ]\arrow[rr,"\gamma^*" description,leftarrow]\arrow[rr,shift right=1.5ex,"\gamma_*"']&& \PShltr(k,\Lambda)
\end{tikzcd}
\] 
where by convention $\gamma_\sharp$ is left adjoint to $\gamma^*$, which is left adjoint to $\gamma_*$. Here $\gamma \colon \lSm(k) \to \lCor(k)$ is the graph functor. For a topology $\tau$ on $\lSm(k)$, a presheaf with log transfers $F$ is a $\tau$-sheaf if $\gamma^* F$ is a $\tau$-sheaf. We denote by $\mathbf{Shv}_{\tau}^{\rm ltr}(k, \Lambda)\subset \PShltr(k, \Lambda)$ the subcategory of $\tau$-sheaves. By \cite[Prop. 4.5.4]{bpo} and \cite[Thm. 4.5.7]{bpo}, the strict Nisnevich and the dividing Nisnevich topology on $\lSm(k)$ are compatible with log transfers: this means in particular that the inclusion $\mathbf{Shv}_{\tau}^{\rm ltr}(k, \Lambda)\subset \PShltr(k, \Lambda)$ admits an exact left adjoint $a_\tau$ (see \cite[Prop. 4.2.10]{bpo}), and that the category $\mathbf{Shv}_{\tau}^{\rm ltr}(k, \Lambda)$ is a Grothendieck Abelian category (\cite[Prop. 4.2.12]{bpo}).
\subsection{Effective log motives}
We fix again a Noetherian fs log scheme $S$ and a field $k$, and let $\sC$ be either $\lSm(S)$ or $\lCor(k)$. We start by recalling some standard facts.
The category $\Cpx(\PSh(\sC,\Lambda))$ of unbounded complexes of presheaves is equipped with the usual global (projective) model structure $(\mathbf{W},\mathbf{Cof},\mathbf{Fib})$, where the weak equivalences are the quasi-isomorphisms and the fibrations are the degreewise surjective maps (see, for example, the remark after \cite[Thm.\ 9.3.1]{HoveyPalmieriStrickland} or \cite[Proposition 4.4.16]{AyoubThesis2}). 

Let $\tau$ be a topology on $\sC$ (and we require that $\tau$ is compatible with transfers when $\sC= \lCor(k)$). Recall that a morphism of complexes of presheaves $F\to G$ in $\Cpx(\PSh(\sC,\Lambda))$ is called a  $\tau$-local equivalence if it induces isomorphisms  $a_\tau H_i(F)\simeq a_\tau H_i(G)$ for every $i\in \Z$, where $H_i(F)$ denotes the  $i$-th homology presheaf of $F$.  

The left Bousfield localization of the global model structure on  $\Cpx(\PSh(\sC,\Lambda))$ with respect to the class of $\tau$-local equivalences exists and the resulting model structure $(\mathbf{W}_\tau,\mathbf{Cof},\mathbf{Fib}_\tau)$ is called the $\tau$-\emph{local} model structure (see, for example, \cite[Prop. 4.4.31]{AyoubThesis2}). The maps in $\mathbf{W}_\tau$ are precisely the $\tau$-local equivalences. It is well known that the homotopy category of $\Cpx(\PSh(\sC,\Lambda))$ with respect to the local model structure, denoted $\D_\tau(\PSh(\sC,\Lambda))$, is equivalent to the unbounded derived category $\D(\Shv_{\tau}(\sC  ,\Lambda))$ of the Grothendieck abelian category of $\tau$-sheaves $\Shv_{\tau}(\sC, \Lambda)$.

For any $X\in \sC$, we write
\[ R\Gamma_\tau(X, - ) \colon \D_\tau(\PSh(\sC,\Lambda))\to \D(\Lambda) \]
for the right derived functor of the global section functor $\Gamma(X,-)$. The $\tau$-(hyper) cohomology of $X$ with values in a complex of presheaves $C$ is then computed as \[ \mathbf{H}^*_{\tau}(X, a_\tau(C)) = \mathbf{H}^*(R\Gamma_\tau(X, a_\tau C)).\]

Finally, let $\bcube_S:=(\P^1_S,\infty_S)\in \sC$, with $S=\Spec(k)$ if $\sC = \lCor(k)$.
\begin{defn} The $(\tau,\bcube_S)$-local model structure on $\Cpx(\PSh(\sC,\Lambda))$ is the (left) Bousfield localization of the $\tau$-local model structure with respect to the class of maps
\[
\Lambda(\bcube_S\times_S X)[n]\to \Lambda(X)[n]
\]
for all $X\in \sC$ and $n\in \Z$. 
\end{defn}
General properties of the Bousfield localization (see e.g.\ \cite[D\'efinition 4.2.64, Proposition 4.2.66]{AyoubThesis2}) imply that a complex of presheaves $C$ is $(\tau,\bcube_S)$-fibrant if and only if it is $\tau$-fibrant (i.e. fibrant for the $\tau$-local model structure) and 
the morphisms $C(X)\to C(X\times_S \bcube_S)$ induced by the projection, are quasi-isomorphisms for every $X\in \sC$.

\begin{definition}\label{def:local-obj-local-complx}
(1) A complex of presheaves $C$, seen as an object of $\D_{\tau}(\PSh(\sC,\Lambda))$ is called $\bcube_S$-local if for all $X\in \sC$ the map
\[
R\Gamma_{\tau}(X,C)\to R\Gamma_{\tau}(X\times_S \bcube_S,C)
\]
is a quasi isomorphism in $\D(\Lambda)$. Equivalently, $C$ is $\bcube_S$-local if and only any $\tau$-fibrant replacement of $C$ is $(\tau,\bcube_S)$-fibrant. 

(2) Let $L\colon \D_{\tau}(\PSh(\sC,\Lambda)) \to \D_{(\dNis,\bcube_S)}(\PSh(\sC,\Lambda))$ be the localization functor.  A complex of presheaves $K$, seen as an object of $\D_{\tau}(\PSh(\sC,\Lambda))$, is called $(\tau, \bcube_S)$-locally acyclic if $L(K)$ is $\tau$-locally isomorphic to the zero complex, i.e. if $R\Gamma_{\tau}(X,L(K)) \simeq 0$ for all $X\in \sC$.
\end{definition}

\begin{defn}\label{def:logDMlogDA}
The derived category of effective log motives (with transfers) 
\[\lDM(k, \Lambda) = \mathbf{logDM}_{\dNis}^{\rm eff}(k, \Lambda) = \D_{(\dNis,\bcube)}( \Cpx(\PSh^{\rm ltr}(k,\Lambda)))\]
is the homotopy category of $\Cpx(\PSh^{\rm ltr}(k,\Lambda))$ with respect to the $(\dNis,\bcube)$-local model structure. Similarly, if $S$ is an fs Noetherian log scheme of finite Krull dimension, the category of effective log  motives without transfers $\lDA(S, \Lambda) = \mathbf{logDA}_{\dNis}^{\rm eff}(S, \Lambda)$ is the homotopy category of $\Cpx(\PSh^{\rm log}(S,\Lambda))$ with respect to the $(\dNis,\bcube_S)$-local model structure.
\end{defn}
The interested reader can verify that Definition \ref{def:logDMlogDA} is equivalent to \cite[Def. 5.2.1]{bpo}

We collect now some well-known facts about the $(\tau,\bcube_S)$-local model structure, for $\tau\in \{\sNis, \dNis\}$ that we are going to use later. Recall that $\Cpx(\PSh^{\rm log}(S,\Lambda))$ is a closed monoidal model category with respect to the global model structure by \cite[Lemme 4.4.62]{AyoubThesis2}. We write $\uHom(-, -)$ for the internal Hom functor.
\begin{lemma} \label{rmk;magic} Let $I$ be a $\tau$-fibrant object (resp.\ a $(\tau, \bcube_S)$-fibrant object) of $\Cpx(\PSh^{\rm log}(S,\Lambda))$. Then, for every $X\in \lSm(S)$, the  complex $\uHom_S(\Lambda(X),I)$ is $\tau$-fibrant (resp.\ is $(\tau, \bcube_S)$-fibrant).
\end{lemma}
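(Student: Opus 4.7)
The plan is to exploit the adjunction $\Lambda(X)\otimes(-) \dashv \uHom_S(\Lambda(X),-)$ with source and target $\Cpx(\PShlog(S,\Lambda))$ equipped successively with the $\tau$-local and the $(\tau,\bcube_S)$-local model structures, and to show that the left adjoint is a left Quillen functor for both. By general nonsense about Quillen adjunctions, the right adjoint $\uHom_S(\Lambda(X),-)$ then preserves fibrant objects, handling the two cases simultaneously.

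First I would unwind the internal Hom: for every $Y\in \lSm(S)$ there is a canonical identification of complexes of $\Lambda$-modules
\[
\uHom_S(\Lambda(X),I)(Y) \;=\; I(X\times_S Y),
\]
so that the functor $\uHom_S(\Lambda(X),-)$ is simply precomposition with $(X\times_S -)\colon \lSm(S)\to \lSm(S)$. In the global projective model structure on $\Cpx(\PShlog(S,\Lambda))$ every object is fibrant (fibrations are degreewise surjective), so the $\tau$-fibrancy of $\uHom_S(\Lambda(X),I)$ reduces to checking $\tau$-descent.

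For the $\tau$-fibrant part, I would argue that the functor $Y\mapsto X\times_S Y$ sends $\tau$-distinguished squares to $\tau$-distinguished squares. This rests on the stability of each cd-structure under fs base change: strict \'etale morphisms and open immersions (hence strict Nisnevich distinguished squares) are stable under pullback in the category of fs log schemes, and the isomorphism condition on the reduced complement is preserved as well; log modifications, i.e.\ surjective proper log \'etale monomorphisms by \cite[A.11.9]{bpo}, and hence elementary dividing squares, are also stable under fs pullback. It follows that for every $\tau$-distinguished square $Q$, the square $\uHom_S(\Lambda(X),I)(Q) = I(Q\times_S X)$ is a homotopy pullback by $\tau$-fibrancy of $I$. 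Consequently $\uHom_S(\Lambda(X),I)$ is $\tau$-fibrant.

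For the $(\tau,\bcube_S)$-fibrant case, it remains to verify $\bcube_S$-locality. For every $Y\in \lSm(S)$, the projection $Y\times_S\bcube_S\to Y$ induces the map
\[
\uHom_S(\Lambda(X),I)(Y) \to \uHom_S(\Lambda(X),I)(Y\times_S\bcube_S),
\]
which is identified with
\[
I(X\times_S Y)\to I(X\times_S Y\times_S \bcube_S)
\]
and hence is a quasi-isomorphism, since $I$ is $(\tau,\bcube_S)$-fibrant and $X\times_S Y\in \lSm(S)$. Combined with the previous step, this shows that $\uHom_S(\Lambda(X),I)$ is $(\tau,\bcube_S)$-fibrant. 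The main point to verify carefully is the stability of the strict Nisnevich and dividing cd-structures under fs base change along $X\times_S(-)$; once this is granted, everything else is formal.
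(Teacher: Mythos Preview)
Your argument is correct, though the execution diverges from the announced plan: you say you will show $\Lambda(X)\otimes(-)$ is left Quillen, but what you actually do is verify directly that $\uHom_S(\Lambda(X),I)$ is fibrant, via the formula $\uHom_S(\Lambda(X),I)(Y)=I(X\times_S Y)$ and the stability of $\tau$-distinguished squares under $X\times_S(-)$. This is fine, and in fact more elementary than the paper's proof. The paper genuinely carries out the Quillen-adjunction strategy: it invokes \cite[Prop.~4.4.63]{AyoubThesis2} (which uses that the small sites $Y_\tau$ have enough points) to show that tensoring with the cofibrant object $\Lambda(X)$ preserves trivial $\tau$-local cofibrations, whence $\uHom_S(\Lambda(X),-)$ is right Quillen and preserves fibrant objects.

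Your route trades the ``enough points'' input for the geometric fact that the strict Nisnevich and dividing cd-structures are stable under fs base change, which is indeed the only point requiring care and holds because strict \'etale maps, open immersions, and log modifications are all stable under fs pullback. One thing you take for granted is the characterization of $\tau$-fibrant objects as the projectively fibrant complexes satisfying cd-descent (sending distinguished squares to homotopy pullbacks); this is standard for the complete regular quasi-bounded cd-structures at hand, but is worth citing. The paper's argument is more portable to sites without a good cd-structure, while yours is more concrete and avoids the appeal to points.
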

\begin{proof}
Every representable presheaf $\Lambda(X)$ is cofibrant for the projective model structure, and $- \otimes \Lambda(X)$ is a left Quillen functor. So, for every $A\to B\in \textrm{Cof}\cap W_\tau$,  we have that $A\otimes\Lambda(X)\to B\otimes\Lambda(X)$ is a trivial $\tau$-local cofibration (see \cite[Prop. 4.4.63]{AyoubThesis2}, and observe that the small site $Y_{\tau}$ is coherent for every $Y\in \lSm(S)$ since $\ul{S}$ is quasi-compact and quasi-separated, hence it has enough points by \cite[Exp. VI, Prop. 9.0]{SGA4-2} and we can apply \emph{loc.\ cit.}). In particular every $\tau$-fibrant object $I$ satisfies the lifting property:
\[
\xymatrix{
A\otimes \Lambda(X)\ar[r]\ar[d]&I\\
B\otimes \Lambda(X)\ar@{-->}[ur]
}
\]
We conclude that  $- \otimes \Lambda(X)$ is a left Quillen functor for the $\tau$-local model structure, hence $\uHom_S(\Lambda(X),-)$ is a right Quillen functor. In particular, $\uHom_S(\Lambda(X),I)$  is $\tau$-fibrant. 
 In a similar way, if $I$ is $(\tau,\bcube)$-fibrant, we have that $\uHom_S(\Lambda(X),I)$ is $\tau$-fibrant and $\bcube$-local, so it is $(\tau,\bcube)$-fibrant.   
%
\end{proof}

\begin{para}\label{rmk:changetopos}

Let $X\in \lSm(S)$ and let $\lambda\colon X\to S$ be the structural morphism. We have an induced functor $\lambda^*:\PShlog(S,\Lambda)\to \PShlog(X,\Lambda)$ given by precomposition with $\lambda$.
The functor $\lambda^*$ and its left Kan extension $\lambda_!$ induce two adjoint functors on the categories of complexes: 

\begin{equation}\label{eq:changetopos}
\lambda_! \colon \Cpx(\PShlog(X,\Lambda))\leftrightarrows   \Cpx(\PShlog(S,\Lambda)): \lambda^*.
\end{equation}
Since $\lambda^*$ is exact,  it preserves by definition global fibrations and global weak equivalences, hence $\lambda_!$ preserves global cofibrations and \eqref{eq:changetopos} is a Quillen adjunction. In fact, by e.g.\ \cite[Thm. 4.4.51]{AyoubThesis2}, the same holds for the $\tau$-local model structure where $\tau$ is a topology on $\lSm(S)$; in particular, $\lambda^*$ preserves $\tau$-fibrant objects.




Finally, if $C\in \Cpx(\PShlog(S,\Lambda))$ is $\bcube_S$-local, then $\lambda^*C$ is $\bcube_X$-local, since for all $U\in \lSm(X)$
\[
\lambda^*C(U\times_X \bcube_X)=C(U\times_X X \times_S \bcube_S)\simeq C(U\times_X X)=\lambda^*C(U)
\]
We conclude that $\lambda^*$ preserves $(\tau,\bcube)$-fibrant objects as well.
\end{para}

\begin{para}


We end this section with a computation of the localization functor 
\[L=L_{(\tau,\bcube_S)}\colon \Cpx(\PShlog(S,\Lambda))\to \Cpx(\PShlog(S,\Lambda))_{(\tau,\bcube_S)} \subset \Cpx(\PShlog(S,\Lambda)),\] 
where $\Cpx(\PShlog(S,\Lambda))_{(\tau,\bcube_S)}$ denotes the subcategory of $(\tau,\bcube_S)$-local objects. By general properties of the Bousfield localization, $L$ comes equipped with a natural transformation $\lambda\colon id \to L$, and the pair  $(L,\lambda)$  is unique up to a unique natural isomorphism. 

An explicit description of the localization functor  has been worked out by Ayoub in \cite[Section 2]{P1loc} for the $\P^1$-localization. 
We spell out the construction for presheaves without transfers and for $\tau\in \{\sNis, \dNis\}$. 

\begin{constr}\label{constr;phi}(see \cite[Construction 2.6]{P1loc})
We fix an endofunctor $(-)_{\tau}$ which gives a \emph{$\tau$-fibrant replacement}. Let $\Lambda(\bcubered_S)$ be the kernel of the map $\Lambda(\bcube_S)\to \Lambda$. For a complex $C\in \Cpx(\PShlog(S,\Lambda))$ we put\[
\Phi(C):= \textrm{Cone}\bigl\{\delta:\Lambda(\bcubered_S)\otimes_{\Lambda} \uHom_S(\Lambda(\bcubered_S),C_{\tau})\to C_{\tau}\bigl\}
\]
where $\delta$ is the counit of the adjuntion $\Lambda(\bcubered_S)\otimes_{\Lambda}\_\dashv \uHom_S(\Lambda(\bcubered_S),\_)$. 

We obtain an endofunctor $\Phi$ equipped with a natural transformation $\phi\colon id\to \Phi$, and we define the endofunctor $\Phi^{\infty}$ by taking the colimit of the following sequence:
\[
C\xrightarrow{\phi_C} \Phi(C)\xrightarrow{\phi_{\Phi(C)}} \Phi^{\circ 2}(L)\xrightarrow{\phi_{\Phi^{\circ 2}(C)}}\ldots\to\Phi^{\circ n}(C)\to\ldots
\]
By construction, the functor $\Phi^{\infty}$ comes equipped with a natural transformation $\phi^{\infty}\colon id \to \Phi^{\infty}$.
\end{constr}
\end{para}
\begin{thm}\label{thm;loc} (see \cite[Th\'eor\`eme 2.7]{P1loc})
Let $C\in \Cpx(\PShlog(S,\Lambda))$. Then $\Phi^{\infty}(C)$ is $(\tau,\bcube_S)$-fibrant and $\phi^{\infty}$ is a $(\tau,\bcube_S)$-local equivalence. In other words, the pair $(\Phi^{\infty},\phi^{\infty})$ is naturally isomorphic to the $(\tau,\bcube_S)$-localization $(L,\lambda)$.
\begin{proof}
We follow the same pattern of the proof in \cite{P1loc}, and we divide the proof in two steps. First, we need to show that for any complex of presheaves $C$, the morphism  $C\to \Phi^\infty(C)$ is a  $(\tau,\bcube_S)$-local equivalence. After that, we have to prove that $\Phi^\infty(C)$ is fibrant for the $(\tau,\bcube_S)$-local model structure.

We begin by observing that for all $F\in \Cpx(\PShlog(S,\Lambda))$, the tensor product $\Lambda(\bcubered)\otimes_\Lambda F$ is $(\tau,\bcube_S)$-locally acyclic (see Def.\ \ref{def:local-obj-local-complx}). Indeed,  the subcategory of $(\tau,\bcube_S)$-locally acyclic complexes is a triangulated subcategory of $\D_\tau(\PShlog(S,\Lambda))$ which is stable by direct sums,  and by construction it contains all the objects of the form $\Lambda(\bcubered_S)\otimes_\Lambda \Lambda(X)$ for any $X\in \lSm(S)$. 

Next, note that since the homotopy fiber of $\phi_C$ is given by 
\[\Lambda(\bcubered_S)\otimes_{\Lambda} \uHom_S(\Lambda(\bcubered_S),C_{\tau}),\] 
which is then $(\tau,\bcube_S)$-locally acyclic in virtue of what we just observed, $\phi_C$ is a $(\tau,\bcube_S)$-local equivalence for all complexes $C$. 
Since filtered colimits preserve $(\tau,\bcube_S)$-local equivalences, we conclude that the map $C\to \Phi^\infty(C)$ is a $(\tau,\bcube_S)$-local equivalence.

We move to the second part of the proof. By construction, the map $\Phi^{\circ n}(C)\to \Phi^{\circ n+1}(C)$ factors through $\Phi^{\circ n}(C)_{\tau}$, which are by construction $\tau$-fibrant. Hence $\Phi^{\infty}(C)$ is a filtered colimit of $\tau$-fibrant objects.

By Lemma \ref{lm;fcolimfib} below, filtered colimits preserve $\tau$-fibrant objects, hence $\Phi^\infty(C)$ is $\dNis$ fibrant.

Finally, we need to show that $\Phi^{\infty}(C)$ is $\bcube_S$-local, which is equivalent to show that $\uHom_S(\bcubered_S,\Phi^{\infty}(C))$ is acyclic. The argument in the proof of part (B) of \cite[Thm. 2.7]{P1loc} goes through without changes. We leave the verification to the reader. 
\end{proof}
\end{thm}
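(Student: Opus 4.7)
The plan is to establish separately the two assertions comprising the theorem: that $\phi^{\infty}_C\colon C \to \Phi^{\infty}(C)$ is a $(\tau,\bcube_S)$-local equivalence, and that $\Phi^{\infty}(C)$ is $(\tau,\bcube_S)$-fibrant. Once both are proved, the universal property of the Bousfield localization identifies $(\Phi^{\infty},\phi^{\infty})$ with $(L,\lambda)$ up to unique natural isomorphism, so no further argument is needed.

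For the local equivalence, the key step I would isolate is the observation that for every $F\in \Cpx(\PShlog(S,\Lambda))$, the complex $\Lambda(\bcubered_S)\otimes_\Lambda F$ is $(\tau,\bcube_S)$-locally acyclic. Indeed, the subcategory of locally acyclic complexes is a localizing subcategory of $\D_\tau(\PShlog(S,\Lambda))$, and by the very definition of the $\bcube_S$-local model structure it contains every object of the form $\Lambda(\bcubered_S)\otimes_\Lambda \Lambda(X)\simeq \ker(\Lambda(X\times_S \bcube_S)\to \Lambda(X))$, hence it contains $\Lambda(\bcubered_S)\otimes_\Lambda F$ by writing $F$ as a homotopy colimit of representables. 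Granted this, the homotopy cofiber of each intermediate map $\phi_{\Phi^{\circ n}(C)}$ is, up to shift, of this form, so every transition in the telescope is a local equivalence; since $(\tau,\bcube_S)$-local equivalences are stable under sequential filtered colimits, so is $\phi^{\infty}_C$.

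For the fibrancy I would split into two substeps. The $\tau$-fibrancy of $\Phi^{\infty}(C)$ follows because each $\Phi^{\circ n}(C)$ is built as a cone between $\tau$-fibrant complexes --- here I invoke Lemma \ref{rmk;magic} applied to the representable $\Lambda(\bcube_S)$ together with the splitting $\Lambda(\bcube_S)\simeq \Lambda(\bcubered_S)\oplus \Lambda$ induced by the unit section --- combined with the expected Lemma \ref{lm;fcolimfib} asserting that filtered colimits preserve $\tau$-fibrancy. For $\bcube_S$-locality it suffices to show that $\uHom_S(\Lambda(\bcubered_S),\Phi^{\infty}(C))$ is acyclic. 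Using the triangular identity for the adjunction $\Lambda(\bcubered_S)\otimes_\Lambda(-)\dashv\uHom_S(\Lambda(\bcubered_S),-)$, the identity on $\uHom_S(\Lambda(\bcubered_S),\Phi^{\circ n}(C)_\tau)$ factors through $\uHom_S(\Lambda(\bcubered_S),\delta)$; but $\delta$ becomes nullhomotopic once absorbed into the cone defining $\Phi^{\circ(n+1)}(C)$, so every transition $\uHom_S(\Lambda(\bcubered_S),\Phi^{\circ n}(C))\to \uHom_S(\Lambda(\bcubered_S),\Phi^{\circ(n+1)}(C))$ is null on homology. Since $\Lambda(\bcubered_S)$ is compact in presheaves, $\uHom_S(\Lambda(\bcubered_S),-)$ commutes with filtered colimits, whence the conclusion.

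The main obstacle I anticipate is the preservation of $\tau$-fibrancy under filtered colimits --- this is the one genuinely non-formal input, resting on the quasi-boundedness of the strict and dividing Nisnevich cd-structures established in \cite{bpo} and the associated finite cohomological dimension recorded in Proposition \ref{cohdim}. Once Lemma \ref{lm;fcolimfib} is granted, the remainder of the argument is a faithful adaptation of Ayoub's $\P^1$-localization in \cite[Section 2]{P1loc}, with $\bcube_S$ and the (strict or dividing) Nisnevich topology playing the roles of $\P^1$ and the classical Nisnevich topology.
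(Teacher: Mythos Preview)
Your approach is the same as the paper's and the argument for the local equivalence and for $\bcube_S$-locality is correct. There is, however, one imprecision in your $\tau$-fibrancy step. You claim that each $\Phi^{\circ n}(C)$ is a cone between $\tau$-fibrant complexes, invoking Lemma~\ref{rmk;magic}. That lemma does give $\tau$-fibrancy of $\uHom_S(\Lambda(\bcubered_S),C_\tau)$, but the source of $\delta$ is $\Lambda(\bcubered_S)\otimes_\Lambda \uHom_S(\Lambda(\bcubered_S),C_\tau)$, and tensoring with a representable is a \emph{left} Quillen functor: it has no reason to preserve $\tau$-fibrant objects. So $\Phi^{\circ n}(C)$ is not visibly $\tau$-fibrant.

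The paper circumvents this by a cofinality observation rather than by analyzing $\Phi^{\circ n}(C)$ itself: the transition map $\Phi^{\circ n}(C)\to \Phi^{\circ(n+1)}(C)$ factors through the $\tau$-fibrant replacement $\Phi^{\circ n}(C)_\tau$ (this is built into Construction~\ref{constr;phi}), so $\Phi^\infty(C)$ is the colimit of the cofinal tower $\{\Phi^{\circ n}(C)_\tau\}_n$, all of whose terms are $\tau$-fibrant by definition. Then Lemma~\ref{lm;fcolimfib} applies directly. With this one-line correction your proof is complete and matches the paper's.
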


\begin{lemma}\label{lm;fcolimfib}
Let $S$ be a Noetherian scheme of finite Krull dimension and let ${(C_i)}_{i\in I}$ be a filtered diagram in $\Cpx(\PShlog(S,\Lambda))$. Assume that each $C_i$ is $\tau$-fibrant, then $\colim C_i$ is $\tau$-fibrant.
\begin{proof} We argue as in \cite[Proposition 4.5.62]{AyoubThesis2}. For $\tau=\sNis$, it follows from  \cite[\href{https://stacks.math.columbia.edu/tag/0737}{Tag 0737}]{stack}, using that $S$ is Noetherian of finite Krull dimension. For $\tau=\dNis$, we have that for every $X\in \lSm(S)$, and every filtered system $\{F_i\}_{i\in I}\in \Shlog(X,\Lambda)$,  there is a chain of isomorphisms
\begin{align*}
\mathbf{H}^i_{\dNis}(X,\colim_i F_i)\cong^{(1)} &\colim_{Y\in X} \mathbf{H}^i_{\sNis}(Y,\colim_i F_i) \\
\cong^{(2)} &\colim_i\colim_{Y\in X} 	\mathbf{H}^i_{\sNis}(Y,F_i) \\ 
\cong^{(3)} & \colim_i \mathbf{H}^i_{\dNis}(X,F_i),
\end{align*}
where (1) and (3) follow from Thm.\ \ref{thm:cohomologycdNiscolimitS}, and (2) follows from the fact that each $Y$ is also Noetherian of finite Krull dimension.
This implies that filtered colimits preserve $\dNis$-fibrant objects.
\end{proof}	
\end{lemma}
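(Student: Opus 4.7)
The plan is to reduce $\tau$-fibrancy to a cohomological criterion and then use that on a reasonable site, sheaf cohomology commutes with filtered colimits. Recall that a complex $C\in \Cpx(\PShlog(S,\Lambda))$ is $\tau$-fibrant (up to trivial fibrations) if and only if for every $X\in \lSm(S)$ and every $i\in \Z$ the canonical map
\[
H^i(C(X))\longrightarrow \mathbf{H}^i_\tau(X,C)
\]
is an isomorphism. Since the left-hand side commutes with filtered colimits in $C$ (a formal property of filtered colimits of complexes of abelian groups), the task reduces to proving that $\mathbf{H}^i_\tau(X,-)$ commutes with filtered colimits of $\tau$-fibrant complexes.

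First I would treat $\tau=\sNis$. Because $S$ is Noetherian of finite Krull dimension, every $X\in \lSm(S)$ is quasi-compact and quasi-separated with a coherent small strict Nisnevich site, and by Proposition \ref{cohdim} (or its relative version for $\lSm(S)$) such sites have bounded cohomological dimension. Under these hypotheses, the standard commutation of sheaf cohomology with filtered colimits (\cite[\href{https://stacks.math.columbia.edu/tag/0737}{Tag 0737}]{stack}) applies and, via a spectral sequence / truncation argument, lifts from sheaves to unbounded complexes. This establishes the claim for $\tau=\sNis$.

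For $\tau=\dNis$ I would apply Theorem \ref{thm:cohomologycdNiscolimitS}, which for $X\in \lSm(S)$ expresses
\[
\mathbf{H}^i_{\dNis}(X,F)\cong \colim_{Y\in X_{\mathrm{div}}} \mathbf{H}^i_{\sNis}(Y,F),
\]
the index category $X_{\mathrm{div}}$ of log modifications being (co)filtered in the appropriate sense. For a filtered system $\{C_i\}$ of $\dNis$-fibrant complexes, combining the previous paragraph (applied to each $Y$, which is again Noetherian of finite Krull dimension) with an interchange of the two filtered colimits yields
\[
\mathbf{H}^i_{\dNis}(X,\textstyle\colim_i C_i)\cong \colim_i \mathbf{H}^i_{\dNis}(X,C_i),
\]
and invoking the $\dNis$-fibrancy of each $C_i$ gives the required isomorphism with $H^i((\colim_i C_i)(X))$.

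The main obstacle is the careful passage from the statement of Theorem \ref{thm:cohomologycdNiscolimitS}, which is phrased for bounded below complexes of strict Nisnevich sheaves, to the setting of possibly unbounded complexes of presheaves: this requires both the finite cohomological dimension input (ensuring that a hypercohomology spectral sequence converges and that the relevant truncations suffice) and the exactness of $\sNis$- and $\dNis$-sheafification in order to commute sheafification past the filtered colimits.
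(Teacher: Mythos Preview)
Your proposal is correct and follows essentially the same route as the paper: reduce to showing that $\tau$-hypercohomology commutes with filtered colimits, handle $\sNis$ via the Stacks Project criterion (finite cohomological dimension on a coherent site), and deduce the $\dNis$ case from Theorem \ref{thm:cohomologycdNiscolimitS} by interchanging the two filtered colimits. The paper is terser and simply points to \cite[Proposition 4.5.62]{AyoubThesis2} as the template; your explicit identification of the bounded-below hypothesis in Theorem \ref{thm:cohomologycdNiscolimitS} as the one genuine technical wrinkle (resolved by finite cohomological dimension and convergence of the hypercohomology spectral sequence) is a point the paper leaves implicit.
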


\begin{remark}\label{rmk;loctransfers}
 The proof works verbatim for $C\in \Cpx(\PShltr(k,\Lambda))$, where $\otimes_\Lambda$ is changed with the tensor product $\otimes^{\textrm{ltr}}$.
\end{remark}

\section{The connectivity theorem following Ayoub and  Morel}

In this section we show a $\bcube$-analogue of the $\A^1$-connectivity theorem of Morel \cite[Thm. 6.1.8]{Morelconectivity}, adapting the argument of Ayoub in \cite[Section 4]{P1loc}. As in \cite{P1loc}, we exploit the notion of \emph{preconnected} complex (see Definition \ref{def:preconnected-genconnected} below), and we reduce the proof of the connectivity Theorem \ref{connectivity} to a purity statement, namely Theorem \ref{thm;purity}, whose proof will be given in section \ref{section;purity}. The reader should note that while the results in this section are direct analogues of the results in \cite{P1loc}, new ingredients are necessary to prove the purity Theorem, and this is where our arguments diverge from \cite{P1loc}.

Throughout this section, we fix a ground field $k$ and we work with the categories of presheaves and $\tau$-sheaves on $\lSm(k)$ for $\tau\in \{\sZar,\sNis, \dNis\}$. Recall from \cite[Lemma 4.7.2]{bpo} that $\Shv_{\dNis}^{\rm log}(k, \Lambda)$ is equivalent to the category $\Shv_{\dNis}(\SmlSm(k), \Lambda)$, of sheaves defined on the full subcategory $\SmlSm(k)\subset \lSm(k)$.  If $X=(\underline{X},\partial X)\in \SmlSm(k)$ and $x\in \ul{X}$ is any point, we  consider $\Spec(\mathcal{O}_{X,x})\in \widetilde{\SmlSm}(k)$ with the logarithmic structure induced by the pullback of $\partial X$.

\begin{defn}
Let $n\in \Z$ and let $C$ be a complex of presheaves on a site $(\sC, \tau)$. 
We say that $C$ is \emph{locally $n$-connected} (for the topology $\tau$) if the homology sheaves $a_{\tau} H_j(C)$ are zero for $j\leq n$.
\end{defn}

The main result of this section is the following: 

\begin{thm}
\label{connectivity}Assume that  $k$ is a perfect field and let $\tau \in \{\sNis, \dNis\}$. 
Let $C\in \Cpx(\PShlog(k,\Lambda))$ be locally $n$-connected for the $\tau$-topology. Then for any $(\tau,\bcube)$-fibrant replacement $C\to L$, the complex $L$ is locally $n$-connected.
\end{thm}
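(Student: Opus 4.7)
I would follow Ayoub's approach \cite[\S4]{P1loc} to the analogous connectivity theorem in the $\P^1$-local setting, adapted to the logarithmic framework. The strategy combines the explicit construction of the $(\tau,\bcube)$-fibrant replacement provided by Theorem \ref{thm;loc}, namely $L=\Phi^\infty(C)$, with the purity Theorem \ref{thm;purity}. First, I reduce to the case $\tau=\sNis$: by Theorem \ref{thm:cohomologycdNiscolimitS}, $\dNis$-cohomology on $\SmlSm(k)$ is the filtered colimit of $\sNis$-cohomology over log modifications, and since both homology and sheafification commute with filtered colimits, local $n$-connectedness for $\dNis$ follows from local $n$-connectedness for $\sNis$. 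Verifying the $\sNis$ case amounts to showing $H_j(L(X))=0$ for every $j\le n$ and every $X\in\widetilde{\SmlSm}(k)$ with $\ul X$ henselian local.

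By construction $L$ is $(\sNis,\bcube)$-fibrant, so Theorem \ref{thm;purity} supplies injections
\[H_j(L(X))\hookrightarrow H_j(L(\eta_X,\triv)),\]
where $(\eta_X,\triv)$ is the generic point of $\ul X$ equipped with the trivial log structure. It is thus enough to prove the vanishing at $(\eta_X,\triv)$. Writing $\omega^*\colon \Sm(k)\hookrightarrow\lSm(k)$ for the inclusion by trivial log structure, the restriction functor $\omega^{*,*}\colon \PShlog(k,\Lambda)\to\PSh(\Sm(k),\Lambda)$ is exact, so $\omega^{*,*}C$ is locally $n$-connected on $\Sm(k)$; it is also $\Nis$-fibrant after restriction, since the $\sNis$-topology pulled back to trivial log structures is $\Nis$. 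Using $\bcube$-locality of $L$ together with purity applied at each henselian local log point, one then checks that $\omega^{*,*}L$ is $(\Nis,\A^1)$-local on $\Sm(k)$ and that the canonical map $\omega^{*,*}C\to\omega^{*,*}L$ is an $(\Nis,\A^1)$-local equivalence. Morel's classical connectivity theorem \cite[Thm.~6.1.8]{Morelconectivity} then gives $a_{\Nis}H_j(\omega^{*,*}L)=0$ for $j\le n$, and in particular $H_j(L(\eta_X,\triv))=0$ for $j\le n$.

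The main obstacle is Theorem \ref{thm;purity} itself, whose proof is deferred to a later section. In contrast to \cite{P1loc}, where the $\P^1$-local purity statement admits a relatively short argument, the logarithmic version is substantially more delicate: when the log structure on $X$ is nontrivial, one needs genuinely new ingredients from \cite{bpo}, notably the description of motivic Thom spaces \cite[\S7.4]{bpo} and several distinguished triangles in $\lDA(k)$ that encode the interaction of $\bcube$-localization with log blow-ups. By contrast, the reduction of the connectivity theorem to purity sketched above is essentially formal once Ayoub's machinery is in place.
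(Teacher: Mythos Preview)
Your approach has a genuine gap at the step where you claim that $\omega^{*,*}L$ is $(\Nis,\A^1)$-local on $\Sm(k)$. This is false in general, and neither $\bcube$-locality of $L$ nor purity will give it to you. A counterexample is immediate: take $L=\Log(\G_a)$ (a strictly $\bcube$-invariant sheaf placed in degree $0$, hence $(\dNis,\bcube)$-local); then $\omega^{*,*}L(Y)=\Log(\G_a)(Y,\triv)=\Gamma(Y,\mathcal{O}_Y)=\G_a(Y)$, and $\G_a$ is famously not $\A^1$-invariant. More conceptually, $\bcube$-locality gives $L(Y,\triv)\simeq L((Y\times\P^1,Y\times\infty))$, not $L(Y\times\A^1,\triv)$; the map between the two is the restriction to an open subset and has no reason to be an equivalence. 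Since $\omega^{*,*}L$ is not $\A^1$-local, it is not an $(\Nis,\A^1)$-fibrant replacement of $\omega^{*,*}C$, and Morel's classical theorem tells you nothing about the homology of $\omega^{*,*}L$ at generic points.

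The paper does \emph{not} reduce to Morel's theorem; it runs Morel--Ayoub's argument anew in the logarithmic setting. The missing ingredient in your sketch is the notion of an \emph{$n$-preconnected} complex (Definition~\ref{def:preconnected-genconnected}): one first observes that any $\tau$-fibrant replacement of a locally $n$-connected $C$ is $n$-preconnected (cohomological dimension bound, Remark~\ref{rem:def:preconnected-genconnected}(2)), and then---this is the substantial point---uses the explicit description of the localization functor $\Phi^\infty$ from Theorem~\ref{thm;loc} together with the elementary stability properties of preconnectedness (Lemma~\ref{tensorhom}) to show that the $(\tau,\bcube)$-fibrant replacement remains $n$-preconnected (Proposition~\ref{lem;replacepreconnected}). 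Preconnected trivially implies generically $n$-connected, and only then does purity (Corollary~\ref{cor;purity}) convert generic $n$-connectedness into local $n$-connectedness. In short: purity is the last step, not a device for reducing to the classical theory; the control on $H_j(L(\eta_X,\triv))$ comes from the preconnectedness argument, not from Morel.
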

The proof will be given at the end of this section, assuming Theorem \ref{thm;purity}. We need some preliminary definitions, cfr.\ with  \cite[D\'ef. 4.5]{P1loc}.
\begin{defn}\label{def:preconnected-genconnected}
\begin{enumerate}
    \item[(i)] A complex $C$ of presheaves is called \emph{generically $n$-connected} if for all $X\in \SmlSm(k)$ with $\underline{X}$ connected and generic point $\eta_X$, the homology groups $H_{j}(C(\eta_X))$ are zero for $j\leq n$
    \item[(ii)] A complex $C$ of presheaves is called \emph{$n$-preconnected} if for all $X\in \widetilde{\SmlSm}(k)$, the homology groups $H_{j}(C(X))$ are zero for $j\leq n-\dim(\underline{X})$.
\end{enumerate}
\end{defn}

\begin{remark}\label{rem:def:preconnected-genconnected}
(1) Clearly $(ii)\Rightarrow (i)$ since a generic point has dimension $0$, but it is evident that $(i)\not \Rightarrow (ii)$. 

(2) If $C\in \Cpx(\PShlog(k,\Lambda))$ is locally $n$-connected for a topology $\tau$ where the cohomological dimension equals the Krull dimension of the underlying scheme, then $\mathbf{H}^i_{\tau}(X,C)=0$ for $i\geq \dim(\underline{X})-n$. Hence if $G$ is a $\tau$-fibrant replacement of $C$, $G$ is $n$-preconnected, as $H_i(G(X))=\mathbf{H}^{-i}_\tau(X,G)$.
\end{remark}

We will prove some technical result that will be needed later. Here we let $\tau$ be either $\sZar$, $\sNis$ or $\dNis$. 
\begin{prop}(see \cite[Prop. 4.8]{P1loc})
\label{technical1}
Let $C$ be an $n$-preconnected complex of presheaves, then for all $X\in \SmlSm(k)$ we have $\mathbf{H}^i_\tau(X,C)=0$ for $i\geq \dim(X)-n$.
\begin{proof} Without loss of generality we can suppose $n=-1$, i.e.\ $H_{-j}(C(X))=0$ for $j>\dim(X)$, 
    and we need to show that $\mathbf{H}^i_\tau(X,C)=0$ for $i>\dim(X)$.
Using the descent spectral sequence $\mathbf{H}^i(X, a_\tau H_{-j}(C))\Rightarrow H_\tau^{i+j}(X, C)$, it is enough to show $\mathbf{H}^i_\tau(X,a_\tau H_{-j}(C))=0$ for $i> \dim(X)-j$. 

If $j\leq 0$, this follows Proposition \ref{cohdim}, so suppose $j>0$. By $-1$-preconnectedness, $H_{-j}(C)(\Spec(\mathcal{O}_{X,x}))=0$ for $\codim(x)<j$, since $\codim(x)=\dim(\Spec(\mathcal{O}_{X,x}))$. Using Lemma \ref{lm1} below, the statement then follows for    $\tau\in\{\sZar,\sNis\}$.

The result for $\tau=\dNis$ then can be deduced from the case $\sNis$. Indeed, 
using  Lemma \ref{lm1} below, we get 
in particular $\mathbf{H}^i_{\sNis}(Y, a_{\sNis}H_{-j}(C))=0$ for all $Y\in X_{\div}^{Sm}$, whence,  since  by \eqref{eq:cohomologydNiscolimit} we have that\[
\mathbf{H}^j_{\dNis}(X,a_{\dNis}H_{-j}(C))=\colim_{Y\in X_{\div}^{\Sm}} \mathbf{H}^j_{\sNis}(Y,a_{\sNis}H_{-j}(C)),
\]
the required vanishing holds for $\dNis$ as well.
\end{proof}
\begin{lemma}
\label{lm1}
Let $\tau\in \{\sZar,\sNis\}$. Let $F$ be a presheaf of $\Lambda$-modules on the small site $X_{\tau}$ such that for every $\tau$-cover $X'\to X$ and $x'\in X'$ with $\codim_{X'}(x')<j$, we have $F(\Spec(\mathcal{O}_{X',x'}))=0$. Then $\mathbf{H}^{i}_{\tau}(X,a_{\tau}F)=0$ for $i>\dim(X)-j$.
\end{lemma}
\begin{proof}
This is  \cite[Lemma 4.9]{P1loc}; we reproduce part of the proof in our setting for completeness and to take care of some subtleties. Observe that the forgetful functor $f\colon \SmlSm(k)\to \Sm(k)$ that sends $X$ to the underlying scheme $\underline{X}$ defines an isomorphism of the small sites $f_X\colon X_{\sNis}\xrightarrow{\simeq} \ul{X}_{\rm Nis}$ (and similarly for $\sZar$ and $\Zar$): the inverse functor sends an \'etale scheme $g\colon \ul{U}\to \ul{X}$ to the morphism of log schemes $U\to X$, where $U$ is the log scheme having $\ul{U}$ as underlying scheme and log structure given by the inverse image log structure along $g$ (note that this would be false for the $\dNis$-topology). A presheaf $F$ on $X_{\sNis}$ (resp.\ on $X_{\sZar}$) gives then canonically a presheaf $\ul{F}$ on $\ul{X}_{\Nis}$ (resp.\ $\ul{X}_{\Zar}$), by setting $\ul{X}_{\Nis}\ni \ul{U} \mapsto F(U)$ (resp.\ $\ul{X}_{\Zar}\ni \ul{V} \mapsto F(V)$).  Clearly there is a canonical isomorphism $\mathbf{H}^i_{\sNis}(X, F)\cong \mathbf{H}^i_{\Nis}(\ul{X}, \ul{F})$, and by abuse of notation we drop the underline and write simply $F$ for both presheaves on $X_{\sNis}$ or on $\ul{X}_{\Nis}$ (and the same for the Zariski case).

The rest of the proof of the Lemma goes through as in \cite[Lemme 4.9]{P1loc}. See \emph{loc.cit.}\ for more details.
\end{proof}

\end{prop}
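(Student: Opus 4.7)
The plan is to reduce the statement to the vanishing of ordinary $\tau$-cohomology of the homology sheaves $a_\tau H_{-j}(C)$ by means of the standard hypercohomology spectral sequence, and then to handle each $E_2$-term by a case split on the index $j$. First I would normalize by an index shift so that it suffices to treat $n=-1$: unpacking the definition of preconnectedness, the hypothesis becomes $H_{-j}(C(Y))=0$ for all $Y\in\widetilde{\SmlSm}(k)$ and all $j>\dim\underline{Y}$, and the conclusion becomes $\mathbf{H}^i_\tau(X,C)=0$ for $i>\dim\underline{X}$.

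The descent spectral sequence
\[
E_2^{i,j}=\mathbf{H}^i_\tau(X,a_\tau H_{-j}(C))\Longrightarrow \mathbf{H}^{i+j}_\tau(X,C)
\]
reduces the claim to showing $\mathbf{H}^i_\tau(X,a_\tau H_{-j}(C))=0$ whenever $i>\dim\underline{X}-j$. I would then split on the sign of $j$. When $j\leq 0$ the required range forces $i>\dim\underline{X}$, and the vanishing holds for any $\tau$-sheaf by the cohomological dimension bound of Proposition \ref{cohdim}. When $j>0$ the preconnectedness hypothesis is the essential input: for every point $x$ of the small $\tau$-site above $X$ with $\codim(x)<j$, the localization $\Spec\sO_{X,x}$ with its pulled-back log structure is an object of $\widetilde{\SmlSm}(k)$ of dimension strictly less than $j$, whence $H_{-j}(C)(\Spec\sO_{X,x})=0$. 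This is exactly the stalkwise hypothesis of Lemma \ref{lm1}, which then delivers the desired vanishing in the cases $\tau\in\{\sZar,\sNis\}$.

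Finally I would deduce the case $\tau=\dNis$ from the strict Nisnevich case by invoking the colimit formula \eqref{eq:cohomologydNiscolimit}: for $X\in\SmlSm(k)$, $\mathbf{H}^i_{\dNis}(X,a_{\dNis}H_{-j}(C))$ is a filtered colimit of $\mathbf{H}^i_{\sNis}(Y,a_{\sNis}H_{-j}(C))$ over smooth log modifications $Y\to X$. Each $Y$ lies again in $\SmlSm(k)$ with $\dim\underline{Y}=\dim\underline{X}$ and inherits the preconnectedness hypothesis on all its essentially log-smooth localizations, so the Nisnevich vanishing already established kills each term in the colimit. The main subtle point I expect is checking that the local rings appearing in the application of Lemma \ref{lm1} really do define objects of $\widetilde{\SmlSm}(k)$ of the correct dimension, so that the preconnectedness hypothesis transfers to them; this rests on the equivalence of small sites $X_{\sNis}\simeq\underline{X}_{\Nis}$ (recorded in the excerpt) and on the fact that strict \'etale localizations preserve both smoothness of the underlying scheme and the log-smooth structure.
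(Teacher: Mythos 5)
Your proof is correct and follows essentially the same route as the paper: normalize to $n=-1$, use the descent spectral sequence to reduce to a vanishing for each $a_\tau H_{-j}(C)$, split on the sign of $j$ (cohomological dimension bound when $j\leq 0$, Lemma \ref{lm1} fed by the stalkwise vanishing from preconnectedness when $j>0$), and transfer from $\sNis$ to $\dNis$ via the colimit formula \eqref{eq:cohomologydNiscolimit}. The only thing worth noting is that the subtlety you flag at the end — identifying the local rings on the small site with objects of $\widetilde{\SmlSm}(k)$ via $X_{\sNis}\simeq\ul{X}_{\Nis}$ — is precisely what the paper's proof of Lemma \ref{lm1} takes care of, so you've correctly located where the care is needed.
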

\begin{cor}
\label{dnisfib}
Let $C\in \Cpx(\PShlog(k,\Lambda))$ and let $C\to L$ be a $\tau$-fibrant replacement for $\tau\in \{\dNis, \sNis\}$. If $C$ is $n$-preconnected, then so is $L$.
\proof Follows from the fact that $H_{-j}(L(X))=\mathbf{H}^j_{\tau}(X,C)$ and Proposition \ref{technical1}.
\end{cor}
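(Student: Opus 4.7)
The plan is to deduce this corollary directly from Proposition \ref{technical1} by unwinding the two definitions, together with the standard dictionary between sections of a $\tau$-fibrant complex and $\tau$-hypercohomology. For the $\tau$-fibrant replacement $L$ of $C$, sections on $X \in \SmlSm(k)$ compute hypercohomology: one has natural isomorphisms
$$H_{-j}(L(X)) \cong \mathbf{H}^j_\tau(X, L) \cong \mathbf{H}^j_\tau(X, a_\tau C),$$
where the last identification uses that $C \to L$ is a $\tau$-local equivalence (so both sides become the same derived global sections). Under this dictionary, the required vanishing $H_j(L(X)) = 0$ for $j \leq n - \dim(\ul{X})$ translates to $\mathbf{H}^i_\tau(X, a_\tau C) = 0$ for $i \geq \dim(\ul{X}) - n$, which is exactly the conclusion of Proposition \ref{technical1} applied to the $n$-preconnected complex $C$.

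The only remaining point is that $n$-preconnectedness is a condition on all $X \in \widetilde{\SmlSm}(k)$, not just on objects of $\SmlSm(k)$. I would handle this by writing any such $X$ as a cofiltered limit $X = \lim_i X_i$ with $X_i \in \SmlSm(k)$ and strict \'etale transition maps, then using the convention $L(X) := \colim_i L(X_i)$ recalled in the preliminaries together with exactness of filtered colimits to obtain $H_j(L(X)) = \colim_i H_j(L(X_i))$. The smooth case already handled then gives the vanishing termwise, and I would combine this with the standard continuity of $\tau$-hypercohomology to ensure the colimit bound matches $\dim(\ul{X})$ rather than the (possibly larger) dimensions $\dim(\ul{X_i})$, using that the cohomological dimension of the essentially smooth local $X$ is bounded by its Krull dimension.

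No essentially new ingredient beyond Proposition \ref{technical1} is needed. The only potential obstacle is bookkeeping: one must carefully track the homology-to-cohomology sign flip and the interplay between $\dim(\ul{X})$ and $\dim(\ul{X_i})$ in the essentially smooth case, but since the vanishing of stalks of $a_\tau H_{-j}(C)$ below codimension $j$ (which drives the proof of Proposition \ref{technical1}) is preserved under localization, the extension is routine.
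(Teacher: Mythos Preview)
Your proposal is correct and follows exactly the paper's approach: identify $H_{-j}(L(X))$ with $\mathbf{H}^j_\tau(X,C)$ and invoke Proposition~\ref{technical1}. Your additional paragraph on the passage from $\SmlSm(k)$ to $\widetilde{\SmlSm}(k)$ addresses a point the paper leaves implicit; note however that the colimit argument as you phrase it is slightly roundabout (termwise vanishing only gives the bound with $\dim(\ul{X_i})$, as you yourself flag). The cleaner route---which your final sentence in effect points to---is simply to observe that the proof of Proposition~\ref{technical1} (descent spectral sequence plus Lemma~\ref{lm1}) applies verbatim to any $X\in\widetilde{\SmlSm}(k)$, since both the cohomological dimension bound and the stalk-vanishing hypothesis are intrinsic to $X$.
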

We have the following set of elementary properties of $n$-preconnected complexes.
\begin{lemma}(see \cite[Lemme 4.11]{P1loc})
\label{tensorhom}
Let $C$ be an $n$-preconnected complex of presheaves on $\lSm(k)$:
\begin{enumerate}
    \item[(i)] For all $G$ $m$-connected, then $C\otimes_\Z G$ is $(n+m+1)$-preconnected.
    \item[(ii)] For all $X\in \lSm(k)$, then $\uHom(X,C)$ is $n-\dim(X)$-preconnected.
    \item[(iii)] If $\alpha:G\to C$ is a morphism of complexes of presheaves on $\lSm(k)$ and $G$ is $(n-1)$-preconnected, then $Cone(\alpha)$ is $n$-preconnected.
\end{enumerate}
\end{lemma}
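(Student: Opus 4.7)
The plan is to derive all three statements formally from the definition of $n$-preconnectedness, starting from (iii), which is the most elementary. For (iii), I would evaluate the distinguished triangle $G\xrightarrow{\alpha} C\to \Cone(\alpha)\to G[1]$ at any $X\in \widetilde{\SmlSm}(k)$ and use the associated long exact sequence in homology. Given $j\leq n-\dim(\ul X)$, the term $H_j(C(X))$ vanishes by $n$-preconnectedness of $C$, and $H_{j-1}(G(X))$ vanishes by $(n-1)$-preconnectedness of $G$ (since $j-1\leq (n-1)-\dim(\ul X)$); the three-term segment
\[ H_j(C(X))\to H_j(\Cone(\alpha)(X))\to H_{j-1}(G(X)) \]
of the long exact sequence then forces $H_j(\Cone(\alpha)(X))=0$, giving $n$-preconnectedness of $\Cone(\alpha)$.

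For (ii), I would use that $\uHom(\Lambda(X),C)(Y) = C(X\times Y)$ on representables, extending to $Y\in \widetilde{\SmlSm}(k)$ via the filtered-colimit convention from the Notation paragraph. The $n$-preconnectedness of $C$ applied to $X\times Y$ then yields the desired vanishing of $H_j(\uHom(X,C)(Y))$ in the correct range, provided one knows that $X\times Y\in \widetilde{\SmlSm}(k)$ and $\dim(\ul{X\times Y})=\dim(\ul X)+\dim(\ul Y)$. The first follows from the stability of log smoothness and of strict \'etale morphisms under fs base change (\cite[Cor.\ IV.3.1.11]{ogu}); the second requires more care, since fs log fibre products do not in general coincide with the naive scheme fibre product (as recalled in the Notation paragraph), but saturation and normalization preserve Krull dimensions, so the equality holds.

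For (i), the tensor product of complexes of presheaves of abelian groups is computed sectionwise, so that $(C\otimes_\Z G)(X)=C(X)\otimes_\Z G(X)$ as complexes for every $X\in \widetilde{\SmlSm}(k)$. The hypotheses translate into $H_i(C(X))=0$ for $i\leq n-\dim(\ul X)$ and $H_i(G(X))=0$ for $i\leq m$ (the latter because $m$-connectedness of $G$ means $H_i(G)=0$ as presheaves for $i\leq m$). The K\"unneth short exact sequence over $\Z$ then yields $H_j(C(X)\otimes_\Z G(X))=0$ for every $j\leq (n-\dim(\ul X))+m+1$, which rearranges to the required $(n+m+1)$-preconnectedness. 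The only subtle point I anticipate is the dimensional equality underlying part (ii); the rest is routine bookkeeping with long exact sequences and K\"unneth.
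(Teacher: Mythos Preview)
The paper does not give its own proof of this lemma; it simply records the statement and defers to \cite[Lemme 4.11]{P1loc}. So there is nothing to compare against beyond checking that your argument is sound in the logarithmic setting.

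Your proofs of (iii) and (i) are fine: the long exact sequence of a cone and the sectionwise K\"unneth short exact sequence give exactly the vanishing you need. For (ii), your strategy is also correct, but your one ``subtle point'' is a non-issue here: since the base $\Spec(k)$ carries the trivial log structure, the Notations paragraph in the paper already tells you that $\ul{X\times_k Y}=\ul X\times_k \ul Y$, so the dimension formula is immediate and no appeal to saturation or normalization is needed. The genuine wrinkle you should flag instead is that the lemma is stated for $X\in\lSm(k)$ rather than $X\in\SmlSm(k)$: if $\ul X$ is not smooth over $k$, then $X\times Y$ need not lie in $\widetilde{\SmlSm}(k)$, and the definition of preconnectedness only speaks about objects of $\widetilde{\SmlSm}(k)$. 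In practice this is harmless, since the only use of (ii) in the paper (Proposition \ref{lem;replacepreconnected}) is with $X=\bcube\in\SmlSm(k)$, but your write-up would be cleaner if you either restricted (ii) to $X\in\SmlSm(k)$ or noted explicitly that this is the only case needed.
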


\begin{prop}(see \cite[Thm. 4.12]{P1loc})\label{lem;replacepreconnected}
Let $F\in \Cpx(\PShlog(k,\Lambda))$ $n$-preconnected and $F\to C$ be a $(\tau,\bcube)$-fibrant replacement for $\tau\in \{\dNis, \sNis\}$. Then $C$ is $n$-preconnected.

\begin{proof} The argument of \cite[Thm. 4.12]{P1loc} goes through. We have an explicit description of $C$ given by Theorem \ref{thm;loc}. Let \[
\Phi(F):= \text{Cone}(\bcube\otimes \uHom(\bcube,F_{\tau}))\to F_{\tau}
\]
where $F_{\tau}$ denotes a $\tau$-fibrant replacement of $F$, which is $n$-preconnected by Corollary \ref{dnisfib}. By Lemma \ref{tensorhom}(i)-(ii) $\bcube\otimes \uHom(\bcube,F_{\tau}))$ is $n-1$-preconnected, hence by lemma \ref{tensorhom}(iii) the cone $\Phi(F)$ is $n$-preconnected. Since $C\simeq \colim_n \Phi^{\circ n}(F)$, we conclude.
\end{proof}
\end{prop}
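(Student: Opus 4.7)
The plan is to follow the strategy of \cite[Thm. 4.12]{P1loc} and exploit the explicit model for the $(\tau,\bcube)$-fibrant replacement given by Construction \ref{constr;phi} and Theorem \ref{thm;loc}, reducing the problem to showing that each iterate of the endofunctor $\Phi$ preserves $n$-preconnectedness, and that this property is preserved under filtered colimits.

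First I would reduce to the case where $F$ is already $\tau$-fibrant. By Corollary \ref{dnisfib}, if $F$ is $n$-preconnected then so is any $\tau$-fibrant replacement $F_\tau$, so we may work with $F_\tau$ in place of $F$. Next I would analyze a single application of the functor $\Phi$, namely
\[
\Phi(F_\tau) = \operatorname{Cone}\bigl(\Lambda(\bcubered)\otimes_{\Lambda} \uHom(\Lambda(\bcubered), F_\tau) \to F_\tau\bigr).
\]
By Lemma \ref{tensorhom}(ii), since $\dim(\bcubered)=1$, the complex $\uHom(\Lambda(\bcubered), F_\tau)$ is $(n-1)$-preconnected. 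Applying Lemma \ref{tensorhom}(i) (and observing that $\Lambda(\bcubered)$ is concentrated in non-negative homological degree), the tensor product $\Lambda(\bcubered)\otimes_{\Lambda} \uHom(\Lambda(\bcubered), F_\tau)$ is $(n-1)$-preconnected as well. The counit map lands in $F_\tau$, which is $n$-preconnected, so by Lemma \ref{tensorhom}(iii) the cone $\Phi(F_\tau)$ is $n$-preconnected.

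The same analysis applies to every iterate $\Phi^{\circ m}(F_\tau)$, once we observe that after each step we may again apply a $\tau$-fibrant replacement (built into the construction of $\Phi$), and that Corollary \ref{dnisfib} keeps us within the class of $n$-preconnected complexes. Thus, by induction on $m$, every $\Phi^{\circ m}(F_\tau)$ is $n$-preconnected. By Theorem \ref{thm;loc}, the $(\tau,\bcube)$-fibrant replacement $C$ is quasi-isomorphic to $\Phi^\infty(F_\tau) = \colim_m \Phi^{\circ m}(F_\tau)$. Since filtered colimits of $\Lambda$-modules are exact, they commute with taking homology, and hence the sections $C(X)=\colim_m \Phi^{\circ m}(F_\tau)(X)$ satisfy $H_j(C(X)) = \colim_m H_j(\Phi^{\circ m}(F_\tau)(X)) = 0$ for $j \leq n - \dim(\ul{X})$ and every $X \in \widetilde{\SmlSm}(k)$. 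This gives the $n$-preconnectedness of $C$.

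The main obstacle I anticipate is the verification that the tensor product step really delivers the expected drop in connectivity: one must be careful about whether Lemma \ref{tensorhom}(i) is formulated with $m$-connectedness of the sheafified homology or with presheaf-level vanishing, and whether $\Lambda(\bcubered)$ has the required connectivity as a presheaf. If only a sheafified version of (i) is available, a minor adjustment will be needed, namely to first $\tau$-sheafify $\Lambda(\bcubered)\otimes \uHom(\Lambda(\bcubered), F_\tau)$ before taking the cone; this change is absorbed by the $\tau$-fibrant replacement already baked into the definition of $\Phi$, and the conclusion is unchanged.
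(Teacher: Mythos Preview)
Your proposal is correct and follows essentially the same approach as the paper's proof: both use the explicit model $\Phi^\infty$ from Theorem \ref{thm;loc}, invoke Corollary \ref{dnisfib} for the $\tau$-fibrant replacement, and then apply Lemma \ref{tensorhom}(i)--(iii) to show that $\Phi$ preserves $n$-preconnectedness, concluding by passing to the filtered colimit. Your write-up is in fact slightly more explicit about the induction over the iterates and the filtered-colimit step, and your use of $\bcubered$ matches Construction \ref{constr;phi} more precisely than the paper's displayed formula.
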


\begin{proof}[Proof of Theorem \ref{connectivity}] We give a proof for $\tau=\dNis$, since the case $\tau=\sNis$ is identical. Let $C\in\Cpx(\PShlog(k,\Lambda))$ be a complex of presheaves, locally $n$-connected for the $\dNis$ topology. Since the Krull dimension of any $X\in \lSm(k)$ agrees with the $\dNis$-cohomological dimension by Proposition \ref{cohdim}, the fact that $C$ is locally $n$-connected is equivalent to ask that, for any $X\in \widetilde{\SmlSm(k)}$, we have $\mathbf{H}^i_{\dNis}(X, C) =0$ for $i\geq \dim(X) -n$. If $G$ is a $\dNis$-local fibrant replacement of $C$, this implies that $H$ is $n$-preconnected (see Remark \ref{rem:def:preconnected-genconnected}(2)), and by Proposition \ref{lem;replacepreconnected}, any $(\dNis,\bcube)$-fibrant replacement $L$ of $C$ is then  $n$-preconnected as well. In particular, it is generically $n$-connected. 
    
We are left to show that every $(\dNis,\bcube)$-fibrant complex $L$ which is generically $n$ connected is also locally $n$-connected.    Consider  the canonical map $a_{\dNis}H_i(L)(X)\to H_i(L)(\eta_X, triv)$ for any $X\in \SmlSm(k)$ with $\ul{X}$ connected and generic point $\eta_X$. Here we write $(\eta_X, \triv)$ to indicate the essentially smooth log scheme given by the scheme $\eta_X$ with trivial log structure. By Corollary \ref{cor;purity} below (this is where the assumption that $k$ is perfect is used), this map is injective. This implies that $a_{\dNis}H_i(L)(X)=0$ for any $X\in \SmlSm(k)$ and $i<n$, i.e. the homology sheaves $a_{\dNis}H_i(L)$ are zero for $i<n$, proving the claim.
\end{proof}


\section{Purity of logarithmic motives}\label{section;purity}

Throughout this section, we fix a base field $k$, and a $(\sNis,\bcube)$-fibrant complex of presheaves $C\in \Cpx(\PShlog(k,\Lambda))$.

\begin{lemma}(see \cite[Sous-Lemme 4.14]{P1loc})
\label{lem;puritytrivial}
Let $X\in \SmlSm(k)$, $x\in \underline{X}$ and $a\in H_i (C(X))$ such that there is a dense open $U\subseteq X$ and $a_{|U}=0$. Then there exists an open neighborhood $V$ of $x$ such that $a_{|V}=0$ if either one of the following hypotheses is satisfied:

\begin{enumerate}
    \item[(i)]  $\partial X=\emptyset$, i.e. $X$ has trivial log structure.
    \item[(ii)]  $\dim(\underline{X})=1$ and $|\partial X|$ is supported on a finite number of $k$-rational points.
\end{enumerate}

\end{lemma}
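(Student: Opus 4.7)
We may assume $x \notin U$ (otherwise $V = U$ works) and set $Z := \ul{X} \setminus U$. The proof follows the strategy of \cite[Sous-Lemme 4.14]{P1loc}, the main inputs being strict Nisnevich descent for $C$ (Mayer--Vietoris for strict Nisnevich distinguished squares) and the $\bcube$-invariance of $C$, both consequences of the $(\sNis,\bcube)$-fibrancy hypothesis.

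For case (i), the log scheme $X$ coincides with its underlying smooth $k$-scheme and the $\sNis$-topology restricted to $X$ agrees with the classical Nisnevich topology. The argument of \cite{P1loc} then transcribes almost verbatim: pulling $a$ back along the projection $p \colon X \times \bcube \to X$ yields $\tilde a := p^*a \in H_i(C(X \times \bcube))$ with $\tilde a|_{U \times \bcube} = 0$ by naturality; a strict Nisnevich distinguished square on $X \times \bcube$ centered at $(x,\infty)$ (built by base-changing a cover on $X$) together with the Mayer--Vietoris long exact sequence isolates the obstruction, and restricting along a section of $p$ (say $s_\infty$) produces the required Zariski open $V \ni x$ with $a|_V = 0$.

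For case (ii), the underlying scheme $\ul X$ is a smooth curve and $|\partial X|$ is a finite set of $k$-rational points. If $x \notin |\partial X|$, then on a small Zariski neighborhood $V' \ni x$ the log structure is trivial and one reduces to case (i) on $V'$. If instead $x \in |\partial X|$, the rationality hypothesis allows us, after further shrinking $X$, to identify $(X,\partial X)$ Zariski-locally at $x$ with the standard log affine line $(\A^1_k, 0)$; in this explicit toric model, the $\bcube$-homotopy provided by the projection $\A^1_k \times \bcube \to \A^1_k$ and the $\infty$-section, combined with the explicit form of the $(\sNis,\bcube)$-fibrant replacement $\Phi^\infty$ from Construction \ref{constr;phi}, produces a chain-level witness of the desired vanishing $a|_V = 0$ on a Zariski neighborhood $V$ of $x$ disjoint from the other log points of $|\partial X|$.

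The main obstacle is the log-point sub-case of (ii): strict \'etale morphisms in $\lSm(k)$ cannot alter the monoid structure, so classical excision has no immediate log analog, and one cannot na\"ively detach the log point from its stalk. The hypotheses that $|\partial X|$ consists of finitely many $k$-rational points and that $\ul X$ is one-dimensional are precisely what reduce the geometry near a log point to the toric model $(\A^1_k,0)$, in which the explicit $\bcube$-homotopy is available; without both assumptions, one would need a genuinely new local model for log smoothness, outside the scope of the purely classical Ayoub-type argument.
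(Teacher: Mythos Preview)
There are genuine gaps in both cases.

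\textbf{Case (i).} Your sketch does not match Ayoub's actual argument in \cite{P1loc}, and the approach you describe does not work. The essential geometric input, which you omit, is Gabber's presentation theorem: after shrinking $X$ around $x$ one obtains an \'etale map $e\colon\ul{X}\to\A^1_Y$ such that $Z=\ul X\setminus U$ maps isomorphically onto a closed subscheme finite over $Y$, hence closed in $\P^1_Y$ and disjoint from $\infty_Y$. This yields strict Nisnevich squares comparing $X$ to $\P^1_Y$ and to $\bcube_Y=(\P^1_Y,\infty_Y)$; excision identifies $C_Z(X)\simeq C_{e(Z)}(\P^1_Y)\simeq C_{e(Z)}(\bcube_Y)$, and since $e(Z)\cap\infty_Y=\emptyset$ the lifted class in $H_iC(\P^1_Y)$ already restricts to zero at $\infty_Y$, so its image in $H_iC(\bcube_Y)$ vanishes by $\bcube$-invariance. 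By contrast, your outline --- pull $a$ back along $p\colon X\times\bcube\to X$, apply Mayer--Vietoris to a square on $X\times\bcube$ base-changed from some unspecified cover of $X$, and restrict along $s_\infty$ --- is circular: $s_\infty^*p^*a=a$, and any square obtained by base-changing a cover of $X$ gives no new information once $\bcube$-invariance is used. There is no mechanism in your sketch to separate $Z$ from a compactifying divisor, which is exactly what Gabber's theorem provides.

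\textbf{Case (ii), log-point sub-case.} The claim that one can identify $(X,\partial X)$ \emph{Zariski}-locally at $x$ with $(\A^1_k,0)$ is false: a smooth curve of positive genus has no Zariski open isomorphic to an open of $\A^1_k$. The paper instead uses Gabber's theorem again to build a strict Nisnevich square relating $X$ to $(\P^1_{k(x)},e(x))$; the rationality hypothesis forces $k(x)=k$, so $(\P^1,e(x))\cong\bcube$ and $\bcube$-invariance gives $H_iC(\P^1,e(x))\cong H_iC(\Spec k)\hookrightarrow H_iC(\P^1-e(x))$, the last map being split by any $k$-point of $\P^1-e(x)$. No appeal to the explicit localisation functor $\Phi^\infty$ is needed, and invoking it would not furnish the missing Nisnevich presentation.
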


\begin{proof}
Let $Z=\underline{X}-\underline{U}$. If $x\not \in Z$, there is nothing to prove, hence we can suppose $x\in Z$. We can apply Gabber's Geometric presentation theorem (\cite[Theorem 3.1.1]{bog} for $k$ infinite, \cite[Theorem 1.1]{gabbergeomfinite} for $k$ finite):  by replacing $X$ with an open neighborhood of $x$ there exist a $k$-scheme $Y$ and an \'etale morphism $e:\underline{X}\to \A^1_Y$ such that
\begin{enumerate}
    \item $Z$ maps isomorphically to $e(Z)$, i.e. there is a Nisnevich distinguished square of schemes\[
    \xymatrix{
    \underline{X}-Z\ar[r]\ar[d]&\underline{X}\ar[d]\\
    \A^1_Y-e(Z)\ar[r]&\A^1_Y
    }\]
    \item The composition\[
    Z\to \underline{X}\to \A^1_Y\to Y
    \]
    is finite
\end{enumerate}
In particular, $e(Z)$ is closed in $\P^1_Y$ and it is disjoint from $\infty_Y$. We now divide the proof in two parts.

\noindent {\bf Case (i):} Let us suppose that $X$ has trivial log structure. In this case  we have two $\sNis$-distinguished squares
\[
\xymatrix{
U\ar[r]\ar[d]&X\ar[d]\\
\P^1_Y-e(Z)\ar[r]&\P^1_Y,
} \quad\quad
\xymatrix{ U\ar[r]\ar[d]&X\ar[d] \\
(\P^1_Y-e(Z),\infty_Y)\ar[r]&(\P^1_Y,\infty_Y) 
}
\]
where $Y$ is seen as a log scheme with trivial log structure, and $\bcube_Y=    (\P^1_Y,\infty_Y)$ (resp. $(\P^1_Y-e(Z), \infty_Y)$) denotes as usual the scheme $\P^1_Y$ (resp. $\P^1_Y-e(Z)$) with compactifying log structure at $\infty_Y = \{\infty\} \times Y$.  Furthermore, the morphisms $(\P^1_Y,\infty_Y)\to \P^1_Y$ and $(\P^1_Y-e(Z),\infty_Y)\to \P^1_Y-e(Z)$, whose underlying morphisms of schemes are the identities on $\P^1_Y$ and $\P^1_Y-e(Z)$, induce a commutative diagram\[
\xymatrix{
U\ar[rr]\ar[dr]\ar[dd]&& X\ar@/_1.0pc/[dd]\ar[dr]\\
&(\P^1_Y-e(Z),\infty_Y)\ar[rr]\ar[dl]&&\bcube_Y\ar[dl]\\
\P^1_Y-e(Z)\ar[rr]&&\P^1_Y
}
\]
We define the following objects of $\D(\Lambda)$:
\begin{align*} C_Z(X)& =\hofib(C(X)\to C(U)),\\ 
    C_Z(\P^1_Y)& =\hofib( C(\P^1_Y)\to C(\P^1_Y-e(Z)))\\
    C_Z(\bcube_Y)& = \hofib(C(\bcube_Y)\to C(\P^1_Y-e(Z),\infty_Y)).
\end{align*}
Since $C$ is $(\sNis,\bcube)$-fibrant, it is in particular $\sNis$-fibrant and therefore the three left vertical arrows of the following diagram 
\begin{equation}\label{eq:lem;puritytrivial}
\xymatrix{
C_Z(\P^1_Y)\ar[r]^{\delta_{\P^1_Y}}\ar[d]^{s_{\P^1_Y}}\ar@/_2.0pc/[dd]_{t}&C(\P^1_Y)\ar[r]\ar[d]\ar@/^3.0pc/[dd]^(.3){r}&C(\P^1_Y-e(Z))\ar[d]\ar@/^3.5pc/[dd]\\
C_Z(X)\ar[r]^{\delta}&C(X)\ar[r]&C(U)\\
C_Z(\bcube_Y)\ar[r]^{\delta_{\bcube_Y}}\ar[u]_{s_{\bcube_Y}}&C(\bcube_Y)\ar[r]\ar[u]&C(\P^1_Y-e(Z),\infty_Y)\ar[u]
}
\end{equation}
denoted $s_{\P^1_Y}$, $s_{\bcube_Y}$ and $t$ respectively, are quasi-isomorphisms.

Let now $\alpha\in H_i(C(X))$ such that $\alpha_{|U}=0$, hence there exists $\beta\in H_i(C_Z(X))$ such that $\alpha=\delta(\beta)$. By the quasi-isomorphism above, there exists a unique $\beta_{\P^1_Y}\in H_iC_Z(\P^1_Y)$ such that $s_{\P^1_Y}(\beta_{\P^1_Y})=\beta$.
Let $\alpha_{\P^1_Y}=\delta_{\P^1_Y}(\beta_{\P^1_Y})$ and let $r\colon C(\P^1_Y)\to C(\P^1_Y,\infty_Y)$ be as in the diagram above. It is enough to show that $r(\alpha_{\P^1_Y})=0$ in $H_i(C(\P^1_Y,\infty_Y))$ to conclude that $\alpha=0$ in $H_i(C(X))$, using \eqref{eq:lem;puritytrivial}.

Write $C_0(\P^1_Y)$ for the homotopy fiber of $C(\P^1_Y)\xrightarrow{s_{\infty}}C(\infty_Y)$. Since  $e(Z)$ is disjoint from $\infty_Y$,  the map $\delta_{\P^1_Y}$ factors as\[
\xymatrix{C_Z(\P^1_Y)\ar[r]^{\delta_{\P^1_Y}}\ar[d]&C(\P^1_Y)\ar[r]\ar@{=}[d]&C(\P^1_Y-e(Z))\ar[d]\\
C_0(\P^1_Y)\ar[r]_{\delta_0}&C(\P^1_Y)\ar[r]^{s_{\infty}}&C(\infty_Y)
}
\]
In particular, there exists $\alpha_0\in H_i(C_0(\P^1_Y))$ such that $\delta_0(\alpha_0)=\alpha_{\P^1_Y}$. 
We will conclude by showing that $r\delta_0$ is the zero map.

Since $C$ is $\bcube$-local, the projection $\pi\colon \bcube_Y\to Y$ induces a quasi-isomorphism $\pi^*\colon C(Y) \xrightarrow{\simeq} C(\bcube_Y)$. Since clearly $\pi$ factors through the natural map $\bcube_Y \to \P^1_Y$, we have a commutative diagram

\[
\xymatrix{
C_0(\P^1_Y)\ar[r]_{\delta_0}& C(\P^1_Y)\ar[r]^{s_\infty}\ar[d]^{r}&C(\infty_Y)\ar@{=}^{\id_Y}[d]\\
&C(\bcube_Y)&C(Y) \ar[l]_{\simeq}^{\pi^*} \ar[lu]_{\pi^*}
}
\]
and this immediately shows that $r\delta_0$ factors through an acyclic complex, as required.

\noindent {\bf Case (ii):} Let us now suppose that $\dim(\underline{X})=1$ and $\partial X$ is nontrivial, supported on a finite set of $k$-rational points. 

If $x\not \in |\partial X|$, then we can suppose $X=(\underline{X}-|\partial X|,\triv)$ and conclude as before (this in fact does not use the assumption on the dimension of $\ul{X}$). So let's assume that $x\in |\partial X|$: since $\dim(\ul{X})=1$, by replacing $X$ with an open neighborhood of $x$ we can suppose $|\partial X|=x=Z$.

After replacing $X$ with an open neighborhood of $x$ we have a $\sNis$ distinguished square\[
\xymatrix{
U\ar[r]\ar[d]&X\ar[d]\\
(\P^1_{k(x)}-e(x),\triv)\ar[r]&(\P^1_{k(x)},e(x)).
}
\]
Since $x$ is a $k$-rational point, we conclude that $k=k(x)$ and $e(x)$ is a $k$-rational point of $\P^1_k$. We drop the subscript $k$ for simplicity. Write as before:
\begin{align*}C_{\{x\}}(X)& =\hofib(C(X)\to C(U))\\ 
    C_{\{e(x)\}}(\bcube^1)& =\hofib(C(\P^1,e(x))\to C(\P^1-e(x)))
\end{align*}
Since $C$ is $(\sNis,\bcube)$-fibrant, hence $\sNis$ fibrant, the left vertical arrow of the following diagram 
\begin{equation}\label{eq:lem;puritytrivial2}
\xymatrix{
C_{\{e(x)\}}(\bcube^1)\ar[r]^{\delta_{\bcube_Y}}\ar[d]_{s_{\bcube_Y}}&C(\P^1,e(x))\ar[r]\ar[d]&C(\P^1-e(x))\ar[d]\\
C_{\{x\}}(X)\ar[r]^{\delta}&C(X)\ar[r]&C(U)\\
}
\end{equation}
is a quasi-isomorphism. Now, since $C$ is $\bcube$-local, the complex $C(\P^1,e(x))$ is quasi-isomorphic to $C(\Spec(k))$, and by choosing any $k$-rational point of $\P^1-e(x)$ splitting the projection $(\P^1-e(x))\to \Spec(k)$, we see that the map  
\[H_i(C(\P^1,e(x)))\to H_i(C(\P^1-e(x)))\] is injective for every $i\in \Z$. This, together with the commutativity of \eqref{eq:lem;puritytrivial2}, allows us to conclude. 
\end{proof}

\begin{cor}
\label{cor;puritytrivialandcurves}
Let $\tau$ be either $\sZar, \sNis$ or $\dNis$
\begin{enumerate}
\item[(i)] Let $X\in \Sm(k)$. Then the following map is injective:\[
a_{\tau} H_i(C(X,\triv))\hookrightarrow H_i(C(\eta_X,\triv))
\]
where $\eta_X$ is the generic point of $X$ and $(X, \triv)$ denotes the scheme $X$ seen as log scheme with trivial log structure.
\item[(ii)] Let $X\in\SmlSm(k)$ such that $\dim(\underline{X})=1$ and $|\partial X|$ is supported on a finite number of $k$-rational points. Then the following map is injective:\[
a_{\tau} H_i(C(X))\hookrightarrow H_i(C(\eta_X,\triv))
\]
where $\eta_X$ is the generic point of $X$.
\end{enumerate}

\end{cor}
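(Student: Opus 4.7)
The plan is to derive Corollary \ref{cor;puritytrivialandcurves} as a formal consequence of Lemma \ref{lem;puritytrivial}, by combining it with the standard sheafification-theoretic characterisation of the generic stalk, following the passage from \cite[Sous-Lemme 4.14]{P1loc} to \cite[Corollaire 4.15]{P1loc}. Suppose $\alpha$ is a section of $a_\tau H_i(C(-))$ over $X$ (respectively of $a_\tau H_i(C(-,\triv))$ in case (i)) which maps to zero in $H_i(C(\eta_X,\triv))$. By definition of the $\tau$-sheafification, $\alpha$ is represented on some $\tau$-cover $\{X'_j \to X\}$ by classes $\beta_j \in H_i(C(X'_j))$. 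In case (i), each $X'_j$ inherits a trivial log structure; in case (ii), the defining property of an $\sNis$ (or $\sZar$) cover ensures that any point $x' \in X'_j$ lying over a point $x \in |\partial X|$ satisfies $k(x') = k(x) = k$, so $|\partial X'_j|$ is again supported on finitely many $k$-rational points and $X'_j$ is one-dimensional. For $\tau = \dNis$ one further has to deal with log modifications, but a log modification of a regular one-dimensional fs log scheme with isolated log points is an isomorphism, so the cover can be reduced to the strict Nisnevich case.

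It is therefore enough to prove: if $X'$ satisfies hypothesis (i) or (ii) of Lemma \ref{lem;puritytrivial} and $\beta \in H_i(C(X'))$ has zero image in $H_i(C(\eta_{X'},\triv))$, then $\beta$ vanishes in $a_\tau H_i(C(-))(X')$. Using the convention that presheaves are extended to essentially smooth log schemes by the filtered colimit, the restriction of $C$ to $(\eta_{X'},\triv)$ is computed as
\[
C((\eta_{X'},\triv)) \;=\; \colim_{\substack{U \subseteq \ul{X'}-|\partial X'| \\ U\ \text{dense open}}} C((U,\triv)),
\]
so that the vanishing at the generic point produces a dense open $U \subseteq X' - |\partial X'|$ with $\beta|_U = 0$. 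For every $x \in \ul{X'}$, Lemma \ref{lem;puritytrivial} now yields an open neighborhood $V_x$ of $x$ in $X'$ on which $\beta$ restricts to zero, applying hypothesis (i) of the lemma when $x \notin |\partial X'|$ and hypothesis (ii) at the finitely many $k$-rational log points in case (ii). The collection $\{V_x\}_{x \in \ul{X'}}$ is then a Zariski cover of $X'$, and hence a $\tau$-cover for each of the three admissible topologies, which forces $\beta$ to vanish in the sheafification. Applying this to each $\beta_j$ and invoking the sheaf property of $a_\tau H_i(C(-))$ gives $\alpha = 0$, as required.

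The main subtlety to check carefully is the behaviour of the hypotheses of Corollary \ref{cor;puritytrivialandcurves}(ii) under the various $\tau$-covers: rationality of log points and one-dimensionality must be preserved, and in particular one needs to show that $\dNis$-covers of a one-dimensional fs log scheme with log structure concentrated at finitely many points reduce, up to refinement, to $\sNis$-covers of its underlying scheme. Once this is settled, the proof is essentially a pointwise application of Lemma \ref{lem;puritytrivial}, with no further input needed.
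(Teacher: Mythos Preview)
Your proposal is correct and follows essentially the same approach as the paper: lift the section $\alpha$ to a class $\beta$ on a $\tau$-cover, observe that $\beta$ dies at all generic points, apply Lemma \ref{lem;puritytrivial} pointwise to produce a Zariski cover on which $\beta$ vanishes, and conclude via the sheaf condition.

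One imprecision to correct: your assertion that for an $\sNis$-cover ``any point $x'\in X'_j$ lying over a point $x\in|\partial X|$ satisfies $k(x')=k(x)=k$'' is not true as stated --- a strict \'etale morphism can have closed points with strictly larger residue field over a $k$-rational point. The Nisnevich condition only guarantees that \emph{some} point over $x$ has the same residue field. The fix is the standard one: refine the cover by removing from each $X'_j$ the finitely many closed points in $|\partial X'_j|$ that fail to be $k$-rational; this preserves the Nisnevich covering property and restores the hypothesis of Lemma \ref{lem;puritytrivial}(ii). The paper's proof (``replacing $(V,\triv)$ with $(V,\partial X_{|V})$'') glosses over the same point. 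Your treatment of the $\dNis$ case via triviality of log modifications in dimension one is correct and is a detail the paper leaves implicit.
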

\begin{proof}
We begin by observing that maps in (i) and (ii) exist since $H_iC(\eta_X,\triv)=a_{\tau}H_iC(\eta_X,\triv)$. 
We first prove (i). Let $\alpha \in a_{\tau}H_i(C(X,\triv))$ be a section such that $\alpha_{|\eta_X}=0$. Let $V\to X$ be a $\tau$-cover such that there exists $\beta\in H_i(C(V,\triv))$ mapping to the image of $\alpha$ in $a_{\tau}H_iC(V,\triv)$. Let $\coprod\eta_V$ be the disjoint union of the generic points of $V$. The following  diagram is clearly commutative\[
\xymatrix{
&H_i(C(V,\triv))\ar[d]\\
a_{\tau}H_i(C(X,\triv))\ar[r]\ar[d]&a_{\tau}H_i(C(V,\triv))\ar[d]\\
H_i(C(\eta_X,\triv))\ar[r]&\bigoplus H_i(C(\eta_V,\triv)),
}
\]
hence $\beta$ maps to zero  in $\bigoplus H_i(C(\eta_V,\triv))$. 
By Lemma \ref{lem;puritytrivial}(i), for all $x\in V$ there exists an open neighborhood $V_x$ such that $\beta\mapsto 0$ in $H_i(C(V_x,\triv))$. Since we can cover $V$ by the $V_x$, and since for every topology $\tau$ as in the statement open sieves are covering, we conclude that $\beta$ maps to zero in $a_{\tau}H_iC(V,\triv)$, hence $\alpha=0$, since $(V,\triv)\to (U,\triv)$ is a $\tau$-cover. This proves (i).
The proof of (ii) is similar, replacing $(V,\triv)$ with $(V,\partial X_{|V})$ and using Lemma \ref{lem;puritytrivial}(ii).
\end{proof}

In order to prove Theorem \ref{thm;purity},  we need the following technical result, which is well known to the experts. Recall that an henselian $k$-algebra is said to be \emph{of geometric type} if there exists $X\in \Sm(k)$ and $x\in X$ such that $R\cong \mathcal{O}_{X,x}^h$, the henselization of the local ring $\mathcal{O}_{X,x}$ at $x$.
\begin{lemma}\label{lem;saveseverything}
Let $k$ be a perfect field, $R$ a henselian $k$-algebra of geometric type. Let $\mathfrak{p}\subseteq R$ such that $R/\mathfrak{p}$ is essentially smooth over $k$. Then the map $R_{\mathfrak{p}}\to k(\mathfrak{p})$ has a section.
\end{lemma}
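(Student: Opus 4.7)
The plan is to use a lifting argument based on the smoothness of $R/\mathfrak{p}$ over $k$, combined with the henselian pair property of $(R,\mathfrak{p})$.

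First, I would observe that since $R/\mathfrak{p}$ is essentially smooth over $k$ and local (being a quotient of the local ring $R$ by a prime), one can present it as $R/\mathfrak{p} \cong B_{\mathfrak{q}}$ where $B$ is a smooth $k$-algebra of finite type and $\mathfrak{q}\subset B$ is a prime corresponding to the maximal ideal of $R/\mathfrak{p}$. By replacing $B$ with the (smooth, integral) irreducible component of $\Spec B$ passing through $\mathfrak{q}$, I may further assume that $B$ is a domain. This matters because then $k(\mathfrak{p}) = \Frac(R/\mathfrak{p}) = \Frac(B_{\mathfrak{q}}) = \Frac(B)$.

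Second, I would use the composition $\sigma\colon B \to B_{\mathfrak{q}} = R/\mathfrak{p}$ and lift it to a $k$-algebra morphism $\tilde\sigma\colon B \to R$. The input is the fact that, since $R$ is henselian local and $\mathfrak{p}\subseteq \mathfrak{m}_R$, the pair $(R,\mathfrak{p})$ is a henselian pair; together with smoothness of $\Spec(B\otimes_k R) \to \Spec R$, the standard lifting property of henselian pairs (applied to the $R$-morphism $\Spec(R/\mathfrak{p}) \to \Spec(B\otimes_k R)$ induced by $\sigma$) produces the desired $\tilde\sigma$.

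Third, composing with the localization $R\to R_{\mathfrak{p}}$, I would verify that the map $B\to R_{\mathfrak{p}}$ factors through $\Frac(B)$. For any nonzero $b\in B$, the relation $\pi\tilde\sigma = \sigma$ (where $\pi\colon R_{\mathfrak{p}}\to k(\mathfrak{p})$ is the quotient) forces $\tilde\sigma(b)\notin \mathfrak{p}R_{\mathfrak{p}}$: indeed $\tilde\sigma(b)\in \mathfrak{p}$ would imply $\sigma(b)=0$ in $B_{\mathfrak{q}}$, hence $b=0$ in $B$ since $B$ is a domain injecting into $B_{\mathfrak{q}}$. Hence $\tilde\sigma(b)$ is a unit in $R_{\mathfrak{p}}$, and $\tilde\sigma$ extends to $s\colon\Frac(B)=k(\mathfrak{p})\to R_{\mathfrak{p}}$. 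To check that $s$ is a section, note that $\pi\circ s$ sends $b/c$ to $\sigma(b)/\sigma(c) = b/c$ in $\Frac(B_{\mathfrak{q}}) = \Frac(B)$.

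The main obstacle I expect is the verification of the two technical inputs: the presentation $R/\mathfrak{p}\cong B_{\mathfrak{q}}$ with $B$ a smooth domain of finite type over $k$, and the fact that $(R,\mathfrak{p})$ is a henselian pair for any $\mathfrak{p}\subseteq \mathfrak{m}_R$. Both are standard (the second follows from the characterization of henselianity for pairs with $I$ contained in the Jacobson radical of $R$), but they need to be invoked cleanly. Perfectness of $k$ is not used explicitly in the argument; it is implicitly used through the hypothesis that $R/\mathfrak{p}$ is essentially smooth.
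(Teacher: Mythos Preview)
Your approach via henselian pairs is conceptually clean, but there is a genuine gap in the first step. In this paper ``essentially smooth'' means a cofiltered limit of smooth $k$-schemes along \emph{\'etale} transition maps (see the Notations section), not merely a localization of a smooth algebra of finite type. In the intended application $R/\mathfrak{p}$ is itself a henselization $\kappa\{t_1,\dots,t_r\}$, which cannot be written as $B_{\mathfrak{q}}$ for any smooth $k$-algebra $B$ of finite type. So the presentation $R/\mathfrak{p}\cong B_{\mathfrak{q}}$ that drives your argument is unavailable.

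The argument can be repaired along the same lines. Write $R/\mathfrak{p}=\varcolim_\alpha C_\alpha$ with each $C_\alpha$ smooth of finite type over $k$ and \'etale transition maps. Pick an index $\alpha_0$ and lift $C_{\alpha_0}\to R/\mathfrak{p}$ to $\tilde\sigma_{\alpha_0}\colon C_{\alpha_0}\to R$ using smoothness and the henselian pair $(R,\mathfrak{p})$, exactly as you propose. For $\alpha\ge\alpha_0$ the transition $C_{\alpha_0}\to C_\alpha$ is \'etale, and for henselian pairs \'etale sections lift \emph{uniquely}; this uniqueness produces a compatible family $\tilde\sigma_\alpha$ and hence a $k$-algebra section $s\colon R/\mathfrak{p}\to R$ of the quotient. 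Your third step then applies verbatim with $R/\mathfrak{p}$ in place of $B$ to extend $s$ to $k(\mathfrak{p})\to R_{\mathfrak{p}}$.

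The paper takes a different route: it cites a structure theorem (from \cite{S-purity}) giving $R\cong\kappa\{t_1,\dots,t_n\}$ with $\mathfrak{p}=(t_{r+1},\dots,t_n)$, so the section $\kappa\{t_1,\dots,t_r\}\hookrightarrow\kappa\{t_1,\dots,t_n\}$ is explicit. Perfectness of $k$ enters there to ensure the residue field $\kappa$ is separable over $k$, allowing the coefficient-field description. Your patched argument is more intrinsic and, as you note, does not visibly use perfectness.
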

\begin{proof}
	Let $\kappa$ be the residue field of $R$. By the properties of henselian $k$-algebras of geometric type (see for example \cite[Lemma 6.1]{S-purity}), there exists a regular sequence $t_1\ldots t_n\in R$  such that $R\cong \kappa\{t_1\ldots t_n\}$, the henselization of the local ring of $\A^n_\kappa$ at $(0)$, and $\mathfrak{p}=(t_{r+1},\ldots t_n)$, hence $R/\mathfrak{p}\cong  \kappa\{t_1\ldots t_r\}$.
	
	In particular the map $\pi:R\to R/\mathfrak{p}$ has an evident section $s:\kappa\{t_1,\ldots,t_r\}\to \kappa\{t_1,\ldots,t_n\}$. Moreover, it is also evident that $\textrm{Im}(s)\cap \mathfrak{p}=0$, thus there exists a unique map $s':\textrm{Frac}(\kappa\{t_1\ldots t_r\})\to A_\mathfrak{p}$ such that the following diagram commutes:\[
	\xymatrix{
		\kappa\{t_1,\ldots,t_r\}\ar@/_1.0pc/[r]_{s}\ar[d]_{\subseteq}&\kappa\{t_1,\ldots,t_n\}\ar[d]\ar[l]_{\pi}\\
		\textrm{Frac}(\kappa\{t_1\ldots t_r\})\ar@/_1.0pc/[r]_{s'}&\kappa\{t_1,\ldots,t_n\}_{(t_{r+1},\ldots,t_n)}\ar[l]_{\pi'}.
	}
	\]
	Hence $s'$ is a section of $\pi'$. This, together with the isomorphism $ k(\mathfrak{p})\cong \textrm{Frac}(\kappa\{t_1\ldots t_r\})$, concludes the proof.
\end{proof}

\begin{thm}\label{thm;purity}
Let $X\in \widetilde{\SmlSm}(k)$ such that $\underline{X}$ is an henselian local scheme. Then the map
\begin{equation}\label{eq;purity}
H_i(C(X))\to H_i(C(\eta_{X},\triv))
\end{equation}
is injective.
\end{thm}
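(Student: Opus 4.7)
The plan is to argue by a double induction on the Krull dimension $d=\dim(\ul X)$ and, for fixed $d$, on the number $n$ of irreducible components of $\partial X$, all of which meet the closed point of $\ul X$ since $\ul X$ is henselian local. Write $\partial X=D_1+\cdots+D_n$; since $X\in\widetilde{\SmlSm}(k)$, each $D_j$ is smooth and $\partial X$ is simple normal crossing on $\ul X$.

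The base case $n=0$ is essentially Corollary \ref{cor;puritytrivialandcurves}(i). Write $X=\lim_\alpha X_\alpha$ as a cofiltered limit of étale neighborhoods of the closed point of $\ul X$ in $\Sm(k)$, all with trivial log structure. Corollary \ref{cor;puritytrivialandcurves}(i) provides injections $a_{\sNis}H_i(C)(X_\alpha,\triv)\hookrightarrow H_i(C(\eta_{X_\alpha},\triv))$ for each $\alpha$. Since every strict Nisnevich cover of the henselian local log scheme $X$ splits, the canonical map $H_i(C(X))\to a_{\sNis}H_i(C)(X,\triv)$ is an isomorphism (i.e.\ Nisnevich sheafification does not alter sections on $X$), and passing to the colimit over $\alpha$ yields the claimed injectivity.

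For the inductive step $n\ge 1$, set $X^{-}=(\ul X,D_1+\cdots+D_{n-1})\in\widetilde{\SmlSm}(k)$, still henselian local and now with only $n-1$ boundary components. By \cite[Theorem 7.5.4]{bpo}, transversality of $D_n$ to $|\partial X^{-}|$ produces a Gysin-type distinguished triangle in $\lDA(k)$
\[
M(X)\longrightarrow M(X^{-})\longrightarrow \mathrm{MTh}(N_{D_n/X^{-}}).
\]
After replacing $C$ with a $(\dNis,\bcube)$-fibrant replacement — which does not alter sections on henselian local log schemes, since dividing covers of such schemes split — applying $R\Hom(-,C)$ yields a long exact sequence comparing $H_i(C(X))$, $H_i(C(X^{-}))$ and the Thom-space hypercohomology $\Hom_{\lDA}(\mathrm{MTh}(N_{D_n/X^{-}}),C[\star])$. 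Using the description of motivic Thom spaces in \cite[\S 7.4]{bpo} together with $\bcube$-invariance of $C$, one identifies this Thom-space term with the cohomology of $C$ on the henselization of $(D_n,\partial D_n|_{D_n})$ at its closed point, a log scheme in $\widetilde{\SmlSm}(k)$ of dimension $d-1$ with at most $n-1$ boundary components.

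The analogous triangle at the generic fibre $(\eta_X,\triv)$ degenerates: the log structure vanishes there, so the Thom-space piece becomes acyclic. The inductive hypothesis, applied to $X^{-}$ (fewer components at the same dimension) and to the henselization of $(D_n,\partial D_n|_{D_n})$ (strictly smaller dimension), gives injectivity on the two flanking terms, and a diagram chase across the resulting ladder of long exact sequences yields the injectivity for $X$. Lemma \ref{lem;saveseverything} is the key technical input, producing compatible retractions at the generic points of the various strata of the boundary so that the horizontal arrows connecting the two long exact sequences land in the correct cohomology groups. The principal obstacle I anticipate is making the Thom-space reduction precise and functorial enough — specifically, verifying that the identification of $\Hom_{\lDA}(\mathrm{MTh}(N_{D_n/X^{-}}),C[\star])$ with the cohomology of $C$ on a lower-dimensional henselian local log scheme is compatible with the Gysin triangle's restriction to $(\eta_X,\triv)$. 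This functorial compatibility is precisely what allows the diagram chase to close.
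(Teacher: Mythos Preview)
Your double induction and the use of the Gysin triangle for $X\to X^{-}$ are the right ingredients, and they match the paper's strategy. But the diagram chase you describe cannot close, for a concrete reason.

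You propose to compare the Gysin long exact sequence for $X$ directly with the one for $(\eta_X,\triv)$. At $\eta_X$ the Thom-space term vanishes, so your ladder looks like
\[
\begin{tikzcd}
0\ar[r] & H_iC(X^{-})\ar[r]\ar[d,hook] & H_iC(X)\ar[r]\ar[d] & T\ar[r]\ar[d] & 0\\
 & H_iC(\eta_X)\ar[r,equal] & H_iC(\eta_X)\ar[r] & 0 &
\end{tikzcd}
\]
with $T=\Hom(\mathrm{MTh}(N_{D_n/X^{-}}),C[i-1])$. Take $\beta\in H_iC(X)$ mapping to zero in $H_iC(\eta_X)$. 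To conclude $\beta=0$ you must lift $\beta$ to $H_iC(X^{-})$, i.e.\ show that its image $\tau\in T$ vanishes. But the right-hand vertical map is $T\to 0$, which imposes no constraint whatsoever on $\tau$. Your appeal to induction on dimension for $D_n$ gives an injection $H_iC(D_n,\partial D_n)\hookrightarrow H_iC(\eta_{D_n},\triv)$, not $T\hookrightarrow 0$; the target lives over $\eta_{D_n}$, which is not $\eta_X$ and does not appear in the bottom row. So the chase stalls.

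The paper repairs this by \emph{not} comparing with $\eta_X$ directly. Instead it proves the intermediate Claim: for every dense open $U\subseteq X$ with $U\cap D_n$ dense in $D_n$, the restriction $H_iC(X)\to H_iC(U)$ is injective. In the resulting ladder the bottom Thom term is $\Hom(\mathrm{MTh}(N_{D_n\cap U/U^{-}}),C[i-1])$, which is nonzero, and the right-hand vertical becomes the honest restriction along $D_n\cap U\hookrightarrow D_n$; induction on dimension (applied to the $(\sNis,\bcube)$-fibrant complex $\uHom(\P^1,C)$) now gives injectivity there, and the chase goes through. Only \emph{after} this does one pass to the colimit over such $U$, landing at the 1-dimensional local ring $\mathcal{O}_{\ul X,\eta_{D_n}}$; Lemma \ref{lem;saveseverything} then provides a coefficient field, reducing to Corollary \ref{cor;puritytrivialandcurves}(ii) over $k(\eta_{D_n})$.

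A second, smaller issue: your claim that a $(\dNis,\bcube)$-fibrant replacement does not alter sections on henselian local log schemes because ``dividing covers of such schemes split'' is false. Dividing covers are log modifications (e.g.\ blow-ups in the boundary), and these do \emph{not} admit sections over local schemes when the boundary has crossings. The paper avoids this by working throughout with an $(\sNis,\bcube)$-fibrant complex and by constructing the needed cofiber sequences \emph{explicitly} from pro-Nisnevich squares built out of the henselian presentation $\ul X\cong\Spec(k(x)\{t_1,\dots,t_r\})$ (see Lemma \ref{lm;mayervietoris} and the squares \eqref{joseph1}--\eqref{joseph2pb}), rather than invoking \cite[Theorem 7.5.4]{bpo} in $\lDA$.
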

\begin{proof}
Let $|\partial X|=D_1+\ldots + D_n$. We proceed by double induction on $\dim(\underline{X})$ and $n$.

If $\dim(\underline{X})=1$ and $n=0$. Then \eqref{eq;purity} is injective by Corollary \ref{cor;puritytrivialandcurves} (i). 
Assume then that  $\dim(\underline{X})=1$ but $n>0$. Then $\partial X$ is supported on the closed point $x$ (note that $\partial X$ is automatically irreducible, since $\ul{X}$ is 1-dimensional and local).
By Lemma \ref{lem;saveseverything}, the map $\Spec(k(x))\to \underline{X}$ has a retraction, hence $X\in \widetilde{\SmlSm(k(x))}$ and $|\partial X|$ is supported on a $k(x)$-rational point.

Let $\lambda\colon \Spec(k(x))\to \Spec(k)$. Since $C$ is $(\sNis,\bcube)$-fibrant in $\Cpx(\PShlog(k,\Lambda))$, $\lambda^*C$ is $(\sNis,\bcube)$-fibrant in $\Cpx(\PShlog(k(x),\Lambda))$ (see Remark \ref{rmk:changetopos}), hence we have:\[
H_iC(X)=H_i\lambda^*C(X)\xrightarrow{(*1)} H_i\lambda^*C(\eta_{X},\triv)=H_iC(\eta_{X},\triv)
\]
and $(*1)$ is injective by Corollary \ref{cor;puritytrivialandcurves} (ii). This proves the case for $\dim(\underline{X})=1$.

Suppose now that $\dim(\underline{X})>1$ and $n=0$. Then again \eqref{eq;purity} is injective by Corollary \ref{cor;puritytrivialandcurves} (i). 
We now pass to the case $\dim(\underline{X})>1$ and $n\geq 1$. For every $1\leq r\leq n$, let $\eta_{D_r}\in \underline{X}$ be the generic point of $D_r$ and $\iota_{D_r}:D_r\to X$ the inclusion. For $Y\in \SmlSm(k)$, we write $c(Y)$ for the number of irreducible components of the strict normal crossing divisor $\partial Y$.

We  make the following Claim:
\begin{claim}\label{claim;superdirtytrick}
Assume the induction hypothesis above, \emph{i.e.} suppose that Theorem \ref{thm;purity} holds for every $Y\in \widetilde{\SmlSm(k)}$ local henselian such that $\dim(\ul{Y})\leq n-1$ and $c(Y) \geq 0$ and with $\dim(\ul{Y})=\dim(\ul{X})$ and $c(Y)\leq n-1$. 
Then, for every $U\subseteq {X}$ dense open such that $U\cap D_n\subseteq D_n$ is dense, the restriction map $H_iC(X)\to H_iC(U)$ is injective.
\end{claim}
We postpone the proof of Claim \ref{claim;superdirtytrick} and complete the proof of the Theorem. 
Since filtered colimits are exact in the category of $\Lambda$-modules, we get from Claim \ref{claim;superdirtytrick} an injective map: 
\begin{equation}\label{eq:thm;purity1}
H_i(C(X))\hookrightarrow \colim_{\substack{U\subseteq X\\ \eta_{D_n}\in U}} H_i(C(U))=H_i(C(\Spec(\mathcal{O}_{\underline{X},\eta_{D_n}}),\iota_{D_n}^*\partial X)).
\end{equation}
Let $\mathcal{O}_{\ul{X},\eta_{D_n}}$ be the local ring of $\ul{X}$ at $\eta_{D_n}$: it is a discrete valuation ring with generic point $\eta_X$ and infinite residue field $k(\eta_{D_n})$. Since $\mathcal{O}_{\ul{X},\eta{D_n}}$ is the localization of a henselian $k$-algebra at a prime ideal generated by a regular sequence, we can apply Lemma \ref{lem;saveseverything} to get a map $\Spec(\mathcal{O}_{\ul{X},\eta_{D_n}})\to \Spec(k(\eta_{D_n}))$ that splits $\eta_{D_n}\to \Spec(\mathcal{O}_{\ul{X},\eta_{D_n}})$, hence 
\[(\Spec(\mathcal{O}_{\underline{X},\eta_{D_n}}),\iota_{D_n}^*\partial X)\in \widetilde{\SmlSm(k(\eta_{D_n}))}\]
and $|\iota_{D_n}^*\partial X|$ is a $k(\eta_{D_n})$-rational point. 

Let $\lambda:\Spec(k(\eta_{D_n}))\to \Spec(k)$. We argue as above: since $C$ is $(\sNis,\bcube)$-fibrant in $\Cpx(\PShlog(k,\Lambda))$, $\lambda^*C$ is $(\sNis,\bcube)$-fibrant in $\Cpx(\PShlog(k(\eta_{D_n}),\Lambda))$ (see again Remark \ref{rmk:changetopos}), hence by Corollary \ref{cor;puritytrivialandcurves} (ii) we have an injective map:
\begin{align}\label{eq:thm;purity2}
H_i(C(\Spec(\mathcal{O}_{\underline{X},\eta_{D_n}})),\iota_{D_n}^*\partial X) & = H_i(\lambda^*C(\Spec(\mathcal{O}_{\underline{X},\eta_{D_n}})),\iota_{D_n}^*\partial X)\\ \nonumber
& \hookrightarrow  H_i(\lambda^*C(\eta_{X},\triv)) \\ 
& =H_i(C(\eta_{X},\triv)).\nonumber
\end{align}
Combining \eqref{eq:thm;purity1} with \eqref{eq:thm;purity2}, we get the desired injectivity. This reduces the proof of Theorem \ref{thm;purity} to the proof of Claim \ref{claim;superdirtytrick}.

\proof[Proof of Claim \ref{claim;superdirtytrick}]

Let $X^-:=(\underline{X},\partial X^-)\in \SmlSm(k)$, where $\partial X^-$ is the strict normal crossing divisor $D_1+\ldots+D_{n-1}$. Since $c(X^-)=n-1$, by hypothesis (this is the induction assumption on the number of components of $\partial X$), the map $H_iC(X^-)\to H_iC(\eta_X,\triv)$ is injective.

Let $\ul{U}$ be an open dense subset of $\ul{X}$ such that $\underline{U}\cap D_n$ is dense in $D_n$ and $U\cap D_i=\emptyset$ if $i\neq n$, and set  $U:=(\underline{U},\partial X_{|\underline{U}})$. Write  $U^-:=(U,\partial X^-_{|U}) = (U,\triv)$. Hence we have a commutative diagram:
\begin{equation}\label{eq:injectivity_induction}
\begin{tikzcd}
    H_i(C(X^-)) \arrow[d, hook, "(1)"] \arrow[r, hook, "(2)"] \arrow[dr, hook, "(3)"] & H_i(C(X)) \arrow[d] \\
    H_i(C(U^-))\arrow[r]&H_i(C(U)),
\end{tikzcd}
\end{equation}
where $(1)$, $(2)$ and $(3)$ are injective since they all factor the injective map $H_iC(X^-)\to H_iC(\eta_X,\triv)$.

Since $\ul{X}$ is Henselian local of dimension $r\geq n$ with closed point $x$, there exists an isomorphism $\epsilon\colon X\cong \Spec(k(x)\{t_1,\ldots,t_r\})$. Without loss of generality, we can assume that $t_r$ is a local parameter for $D_n$, so that $\epsilon$  induces an isomorphism $D_n\cong \Spec(k(x)\{t_1,\ldots,t_{r-1}\})$. Hence the map \emph{henselization at $0$} 
\[k(x)\{t_1,\ldots,t_{r-1}\}[t_r]\to k(x)\{t_1,\ldots,t_{r}\}\]
induces a pro-Nisnevich square\footnote{i.e. a cofiltered limit of Nisnevich squares} of (usual) schemes:
	\begin{equation}\label{joseph1}
	\begin{tikzcd}
	{X-D_n}\ar[r]\ar[d] &X\ar[d,"p"]\\
	D_n\times (\A^1-\{0\})\ar[r] &D_n\times \A^1.
	\end{tikzcd}
	\end{equation}
By Lemma \ref{lm;mayervietoris}, the square
\begin{equation}\label{joseph1pb}
\begin{tikzcd}
C(D_n\times (\A^1,\triv))\ar[r]\ar[d] &C(D_n\times (\A^1,0))\ar[d]\\
C(X^-)\ar[r]&C(X)
\end{tikzcd}
\end{equation}
is a filtered colimit of homotopy pullbacks, hence it is itself a homotopy pullback. 
Consider the system $\{\ul{V}\}$ of open neighborhoods of $\eta_{D_n}\times \{0\}$ in $D_n\times \A^1$: the system $\{p^{-1}(\ul{V})\}$ is cofinal in the system of open neighborhoods of $\eta_{D_n}$ in $X$.
Given any such $\ul{V}$, let $\ul{W}_{\ul{V}}$ be the subset of $D_n\times \A^1$ given as
\[(\pi(D_n\times \{0\} \cap V)\times \A^1)\cap V\]
where $\pi\colon D_n\times \A^1\to D_n$ is the projection. It is clear by construction that $\ul{W}_{\ul{V}}$ contains $(D_n\times \{0\}\cap V)$, and in fact 
\[\ul{V}\cap (D_n \times \{0\}) = \ul{W}_{\ul{V}}\cap (D_n\times \{0\}).
\]
Since $\ul{V}$ is an open neighborhood of $\eta_{D_n}\times \{0\}$, the projection $\pi(\ul{V}\cap (D_n \times \{0\}))$ is open dense in $D_n$, and thus $\ul{W}_{\ul{V}}$ is an open neighborhood of  $\eta_{D_n} \times \{0\}$, and the system  $\{\ul{W}_{\ul{V}}\}$ is cofinal in the system of open neighborhoods of $\eta_{D_n}\times \{0\}$ in $D_n\times \A^1$. 
Since $\{p^{-1}(\ul{W}_{\ul{V}})\}$ is then cofinal in the system of open neibghborhoods of $\eta_{D_n}$ in $X$, we can conclude that there exists $\ul{W}\subseteq \ul{U}$ such that $\ul{W}\cap D_n$ is dense in $D_n$ and induces a pro-Zariski square of (usual) schemes: 
\begin{equation}\label{joseph2}
\begin{tikzcd}
\ul{W}-(D_n\cap \ul{W})\ar[r]\ar[d]&\ul{W}\ar[d]\\
(D_n\cap \ul{W})\times (\A^1-\{0\})\ar[r]&(D_n\cap \ul{W})\times \A^1
\end{tikzcd}
\end{equation}
Hence up to refining $\underline{U}$ we can suppose that $\underline{U}$ itself fits in a pro-Zariski square like \eqref{joseph2}, so again using Lemma \ref{lm;mayervietoris} and the fact that a filtered colimit of homotopy pullbacks is itself a homotopy pullback, we get the following homotopy pullback square:
\begin{equation}\label{joseph2pb}
\begin{tikzcd}
C((D_n\cap U)\times (\A^1,\triv))\ar[r]\ar[d] &C((D_n\cap U)\times (\A^1,0))\ar[d]\\
C(U^-)\ar[r]&C(U)
\end{tikzcd}
\end{equation}

We conclude that for $C$ $\sNis$-fibrant the squares \eqref{joseph1pb} and \eqref{joseph2pb} induce the following equivalences:
\begin{align*}
\textrm{Cofib}(C(X^-)\to C(X)) & \cong \textrm{Cofib}(C(D_n\times (\A^1,\triv))\to C(D_n\times (\A^1,0))) \\ 
&\cong \Hom^\bullet(MTh(N_{D_n/X^-}),C)
\end{align*}
\begin{align*}
\textrm{Cofib}(C(U^-)\to C(U)) & \cong \textrm{Cofib}(C((D_n\cap U)\times (\A^1,\triv))\to C((D_n\cap U)\times (\A^1,0))) \\
& \cong \Hom^\bullet(MTh(N_{D_n\cap U/U^-}),C)
\end{align*}
where the last isomorphisms come from the definition of the motivic Thom space \cite[Def. 7.4.3]{bpo}, the fact that $X$ is local and $U\subseteq X$ is an open immersion, hence $N_{D_n/X^-}\cong D_n\times \A^1$ and $N_{D_n\cap U/U^-}\cong (D_n\cap U)\times \A^1$. Here, $\Hom^\bullet(K,C) \in \D(\Lambda)$ for $K\in \Cpx(\PSh^{\log}(k, \Lambda))$ is the mapping complex.
 In particular, we get for every $i\in\Z$ the following commutative diagram:\begin{equation}\label{eq;longexactthom}
\begin{tikzcd}
0\to H_i(C(X^-)) \arrow[r] \arrow[d, hook] \arrow[dr, hook] & H_i(C(X)) \arrow[r] \arrow[d] &  \Hom(MTh(N_{D_n/X^-}),C[i-1]) \arrow[d] \to 0 \\
H_i(C(U^-)) \arrow[r] & H_i(C(U)) \arrow[r] & \Hom(MTh(N_{D_n\cap U/U^-}),C[i-1]),
\end{tikzcd}
\end{equation}
where the top horizontal sequence is exact and the bottom horizontal sequence is exact in the middle. We will now show that for every $i$, the natural map 
\[\Hom(MTh(N_{D_n/X^-}),C[i])\to \Hom(MTh(N_{(D_n-Z)/((X^-) - Z)}),C[i])\]
is injective, where $Z= \ul{X}-\ul{U}$: 
assuming this, by diagram chase in \eqref{eq;longexactthom} we finally conclude that the map $H_i(C(X))\hookrightarrow H_i(C(U))$ is injective for every $U$ as above.

We can  use \cite[Proposition 7.4.5]{bpo} (note that the condition that $C$ is $(\sNis, \bcube)$-fibrant is enough) to compute the motivic Thom spaces: we get a commutative diagram where the rows are split exact sequences
\begin{small}
\begin{equation}\label{eq;splitdiagram}
\begin{tikzcd}[column sep = tiny]
0\to H_iC(D_n,\partial D_n)\arrow[r]\arrow[d]&H_iC((D_n,\partial D_n)\times\P^1)\ar[r]\arrow[d]&\Hom(MTh(N_{D_n/X^-}),C[i])\arrow[d]\to 0\\
0\to H_iC(D_n\cap U,\partial D_n^\circ)\arrow[r]&H_iC((D_n\cap U,\partial D_n^\circ)\times\P^1)\arrow[r]&\Hom(MTh(N_{D_n-Z/X^--Z}),C[i])\to0. 
\end{tikzcd}
\end{equation}
\end{small}

We have that\[
H_iC(-\times \P^1) = H_i(\uHom((\P^1,\triv),C))(-)
\] 
and $\uHom((\P^1,\triv),C)$ is $(\sNis,\bcube)$-fibrant since $C$ is (see Lemma \ref{rmk;magic}). By induction on dimension we conclude that the middle vertical map of \eqref{eq;splitdiagram} is injective, and since the rows in \eqref{eq;splitdiagram} are split-exact sequences, the right vertical map is a retract of the middle one, hence it is injective. This concludes the proof.
\end{proof}

\begin{cor}
\label{cor;purity}
Let $X\in\SmlSm(k)$ and let $\tau$ be either $\sNis$ or $\dNis$. Then the following map is injective:\[
a_{\tau} H_iC(X)\hookrightarrow H_iC(\eta_X,\triv)
\]
where $\eta_X$ is the generic point of $X$.
\end{cor}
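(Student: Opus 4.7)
The plan is to deduce both cases from Theorem \ref{thm;purity} by a standard stalk/covering reformulation, following the same pattern as Corollary \ref{cor;puritytrivialandcurves} but now using the local purity result for arbitrary essentially smooth henselian log schemes instead of Lemma \ref{lem;puritytrivial}. Note that the map in the statement is well-defined because $H_iC(\eta_X,\triv)=a_\tau H_iC(\eta_X,\triv)$ (the evaluation of any presheaf at a generic point with trivial log structure coincides with its $\tau$-sheafification).

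First I would handle the case $\tau=\sNis$ by a direct stalk-wise argument. Given $\alpha \in a_\sNis H_iC(X)$ with vanishing image in $H_iC(\eta_X,\triv)$, for each point $x\in \ul{X}$ the stalk $(a_\sNis H_iC)_x$ identifies with $H_iC(X_x^h)$, where $X_x^h:=(\Spec\mathcal{O}^h_{X,x},\iota_x^*\partial X)\in\widetilde{\SmlSm}(k)$ is the henselization at $x$ equipped with the pulled-back log structure, using that $H_iC$ is defined on $\widetilde{\SmlSm}(k)$ through the colimit convention recalled in the notations. Theorem \ref{thm;purity} applied to each such $X_x^h$ (which is henselian local, essentially smooth, with strict normal crossing boundary) gives the injection $H_iC(X_x^h)\hookrightarrow H_iC(\eta_X,\triv)$, so each stalk $\alpha_x$ vanishes and hence $\alpha=0$.

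Next I would reduce the $\dNis$ case to the $\sNis$ case. Set $G:=a_\sNis H_iC$, which is an $\sNis$-sheaf for which the injection has just been established; since $a_\dNis=a_\dNis\circ a_\sNis$ we are reduced to showing $a_\dNis G(X)\hookrightarrow G(\eta_X,\triv)=H_iC(\eta_X,\triv)$. Using Theorem 5.1.8 of \cite{bpo} (or more precisely its sections-level incarnation), a section $\alpha\in a_\dNis G(X)$ is represented by some $\beta\in G(Y)$ for a smooth log modification $Y\to X$, so that $Y\in \SmlSm(k)$ and $\ul{Y}$ is birational to $\ul{X}$, giving $\eta_Y=\eta_X$ canonically. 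The hypothesis $\alpha|_{(\eta_X,\triv)}=0$ then reads $\beta|_{(\eta_Y,\triv)}=0$, and the $\sNis$ case applied to $Y$ forces $\beta=0$, hence $\alpha=0$.

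The delicate point, and the place I expect to have to argue most carefully, is the $\dNis$-reduction: one must verify that the colimit presentation via smooth log modifications is available at the level of sections of $a_\dNis$ applied to the $\sNis$-sheaf $G$ (not merely for dNis cohomology of complexes, which is what \cite[Thm.~5.1.8]{bpo} states), and that such modifications may indeed be chosen with $\ul{Y}$ smooth (so that Theorem \ref{thm;purity} is applicable via Step 1). By contrast, the $\sNis$ step is essentially formal once Theorem \ref{thm;purity} is in hand, modulo the harmless identification of stalks on $\SmlSm(k)$ with values on henselizations in $\widetilde{\SmlSm}(k)$.
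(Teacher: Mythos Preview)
Your approach is essentially the paper's, but there is one slip in the $\sNis$ step that needs fixing. Theorem \ref{thm;purity} applied to $X_x^h$ yields an injection $H_iC(X_x^h)\hookrightarrow H_iC(\eta(X_x^h),\triv)$, where $\eta(X_x^h)$ is the generic point of the \emph{henselization}; in general $\Frac(\mathcal{O}^h_{X,x})$ is a nontrivial extension of $k(\eta_X)$, so this is not literally the map into $H_iC(\eta_X,\triv)$ you wrote. To conclude $\alpha_x=0$ you must use the commutative square
\[
\begin{tikzcd}
a_{\sNis}H_iC(X)\ar[r]\ar[d] & H_iC(\eta_X,\triv)\ar[d]\\
\prod_{x\in X} H_iC(X_x^h)\ar[r] & \prod_{x\in X} H_iC(\eta(X_x^h),\triv),
\end{tikzcd}
\]
where the left vertical map is injective by the sheaf condition and the bottom map is injective by Theorem \ref{thm;purity}; this is exactly how the paper argues.

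Your concern about the $\dNis$ reduction is unfounded: applying \cite[Thm.~5.1.8]{bpo} in degree $0$ to the bounded-below complex $G[0]$ (with $G=a_{\sNis}H_iC$) gives precisely $a_{\dNis}G(X)=\colim_{Y\in X_{\rm div}^{\Sm}} G(Y)$. Each such $Y$ lies in $\SmlSm(k)$ with $\eta_Y=\eta_X$, so the $\sNis$ case applies to every $Y$, and a filtered colimit of injections into a fixed target is again injective. This is the paper's argument.
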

\begin{proof}
The case where $\tau=\dNis$ follows from the case of $\sNis$. Indeed, since filtered colimits are exact in the category of $\Lambda$-modules, and since for all $Y\in X_{div}$, the map $\underline{Y}\to \underline{X}$ is birational, so that $\eta_Y=\eta_X$, we get \[
	a_{\dNis}H_iC(X)=\colim_{Y\in X_{div}} a_{\sNis}H_iC(Y)\hookrightarrow \colim_{Y\in X_{div}} H_iC(\eta_Y,\triv) = H_iC(\eta_X,\triv)
	\]
	Thus, from now on let $\tau=\sNis$. For all $x\in X$, let $X_x^h$ be the henselization of $X$ at $x$ with log structure induced by the log structure of $X$, and let $\eta(X_x^h)$ be its fraction field, which is a field extension of $\eta_X$. We have a diagram\[
	\begin{tikzcd}
	a_\tau H_iC(X)\ar[r,"(*3)"]\ar[d,"(*1)"]&H_iC(\eta_X,\triv)\ar[d]\\
	\prod_{x\in X} H_iC(X_x^h)\ar[r,"(*2)"]&\prod_{x\in X}H_iC(\eta(X_x^h),\triv)
	\end{tikzcd}
	\] 
	The map $(*1)$ is injective by the sheaf condition, the map $(*2)$ is injective by Theorem \ref{thm;purity} and the fact that injective morphisms are stable under arbitrary products in $\Lambda$-modules. Hence the map $(*3)$ is injective, which concludes the proof.\end{proof}

\section{The homotopy t-structure on logarithmic motives}
The goal of this section is to generalize to the logarithmic setting the results of Morel on the existence of the homotopy $t$-structure on the category of motives. Having the connectivity theorem \ref{connectivity} at disposal, the proofs are fairly straightforward. 

Recall that the triangulated categories 
\begin{align}
    \label{eq:equiv_Dcat} \D_{\dNis}(\PSh^{\rm log}(\lSm(k), \Lambda)) &\cong \D_{\dNis}(\PSh(\SmlSm(k), \Lambda)),\\ \nonumber
\D_{\dNis}(\PSh^{\rm ltr}(\lSm(k), \Lambda)) & \cong \D_{\dNis}(\PSh^{\rm ltr}(\SmlSm(k), \Lambda))\end{align}
are equipped with a natural $t$-structure. The heart is equivalent to the category of $\dNis$-sheaves, (with or without transfers) 
\begin{align}
    \label{eq:equiv_heart}\Shv_{\dNis}(\lSm(k), \Lambda) &\cong \Shv_{\dNis}(\SmlSm(k), \Lambda),\\ \nonumber
    \Shv_{\dNis}^{\rm ltr}(\lSm(k), \Lambda) &\cong \Shv_{\dNis}^{\rm ltr}(\SmlSm(k), \Lambda)
\end{align}
 The equivalences follow from \cite[Lemma 4.7.2]{bpo} (without transfers) and \cite[Prop. 4.7.5]{bpo} (with transfers), which hold for the $\dNis$-topology but not for the strict Nisnevich topology. We write $\tau_{\geq n}$ and $\tau_{\leq n}$ for the (homologically graded) truncation functors on $\D_{\dNis}(\PSh(\lSm(k), \Lambda)) $ and  $\tau_{\geq n}^{tr}$ and $\tau_{\leq n}^{tr}$  for the (homologically graded) truncation functors on $\D_{\dNis}(\PSh^{\rm ltr}(\lSm(k), \Lambda))$. In view of \eqref{eq:equiv_Dcat} and \eqref{eq:equiv_heart}, we will work with the category of sheaves on $\SmlSm(k)$ without further notice, and simply write $\Shlog(k,\Lambda)$ (resp.\ $\Shltr(k,\Lambda)$) for the abelian category of sheaves (resp.\ of sheaves with log transfers). The proof of the following theorem is formally identical to \cite[Thm. 4.15]{P1loc}.

\begin{thm}
\label{truncation}
Let $C\in \D_{\dNis}(\PSh(\SmlSm(k), \Lambda))$, and suppose that $C$ is $\bcube$-local (see Definition \ref{def:local-obj-local-complx}). Then for all $n\in \Z$, the truncated complexes $\tau_{\geq n}C$ and $\tau_{\leq n}C$ are $\bcube$-local.
\end{thm}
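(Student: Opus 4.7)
\quad My plan is to mimic Ayoub's argument in \cite[Thm.~4.15]{P1loc} (as announced just before the statement), whose only substantial input is the connectivity Theorem~\ref{connectivity}. I would prove that $\tau_{\geq n}C$ is $\bcube$-local first, and then obtain $\tau_{\leq n}C$ for free: the full subcategory of $\bcube$-local complexes in $\D_{\dNis}(\PSh(\SmlSm(k),\Lambda))$ is triangulated (being the right-orthogonal to the set $\{\Lambda(X\times\bcube)[i]\to\Lambda(X)[i]\}_{X,i}$), hence closed under cofibers, so applying it to the distinguished triangle
\[
\tau_{\geq n+1}C \to C \to \tau_{\leq n}C \xrightarrow{+1}
\]
closes the argument once both $C$ and $\tau_{\geq n+1}C$ are known to be $\bcube$-local.

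For the main step I would let $\iota\colon \tau_{\geq n}C\to L$ be the $(\dNis,\bcube)$-localization map produced by Construction~\ref{constr;phi} and Theorem~\ref{thm;loc}, so that $L=\Phi^{\infty}(\tau_{\geq n}C)$ is $\bcube$-local. Since $\tau_{\geq n}C$ is locally $(n-1)$-connected by construction, Theorem~\ref{connectivity} forces $L$ to be locally $(n-1)$-connected as well, i.e.\ $L$ lies in the $\D_{\geq n}$-part of the standard $t$-structure on $\D_{\dNis}(\PSh(\SmlSm(k),\Lambda))$. From here the argument is purely formal: the $\bcube$-locality of $C$ gives a unique factorization $\tau_{\geq n}C \xrightarrow{\iota} L \xrightarrow{\varphi} C$ of the canonical map $\tau_{\geq n}C\to C$, while $L\in \D_{\geq n}$ together with the $t$-structure adjunction
\[
\Hom_{\D_{\dNis}}(L,C)\cong \Hom_{\D_{\dNis}}(L,\tau_{\geq n}C)
\]
produces a unique factorization $L\xrightarrow{\psi}\tau_{\geq n}C\to C$ of $\varphi$. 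One then checks that $\psi\circ\iota$ and $\iota\circ\psi$ are both identities by invoking uniqueness in the two adjunctions: the first from $\Hom(\tau_{\geq n}C,\tau_{\geq n}C)\cong \Hom(\tau_{\geq n}C,C)$ (since $\tau_{\geq n}C\in \D_{\geq n}$), the second from $\Hom(L,L)\cong \Hom(\tau_{\geq n}C,L)$ (since $L$ is $\bcube$-local). Hence $\iota$ is an isomorphism, proving $\tau_{\geq n}C\simeq L$ is $\bcube$-local.

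There is no deep obstacle: apart from the input of Theorem~\ref{connectivity}, which is what places $L$ in $\D_{\geq n}$, every step is a one-line manipulation. The one point that requires mild care is to keep track of the direction of the two adjunctions --- the $t$-structure truncation on one side and the $\bcube$-localization on the other --- and to apply each universal property on the correct side.
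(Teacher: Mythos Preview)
Your argument is correct and follows essentially the same route as the paper: use Theorem~\ref{connectivity} to place $L$ in $\D_{\geq n}$, then play the universal property of the $t$-structure truncation against that of the $\bcube$-localization. The only difference is that the paper stops once $\ell_0\circ e_0=\mathrm{id}$ (your $\psi\circ\iota=\mathrm{id}$), concluding that $\tau_{\geq n}C$ is a \emph{retract} of $L$ and hence $\bcube$-local --- you go one harmless step further and establish that $\iota$ is an isomorphism.
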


\begin{proof} Up to shifting, we can clearly assume that $n=0$, and by the standard properties of the $t$-structure, it is enough to show the statement for $\tau_{\geq0}C$.
Since $C$ is $\bcube$-local, the natural map $\tau_{\geq 0}C\to C$ factors through  $L(\tau_{\geq 0}C)$ as
\[\begin{tikzcd}
\tau_{\geq 0}C \arrow[r, "e_0"] \arrow[dr, "e"] & L(\tau_{\geq 0}C)\arrow[d, "\ell"]\\
& C
\end{tikzcd}
\]
where $L(\tau_{\geq 0}C)$ is any $(\dNis, \bcube)$-fibrant replacement. 
We have by Theorem \ref{connectivity} that $L(\tau_{\geq 0}C)$ is locally $-1$-connected, so the map $\ell$ factors as\[
\begin{tikzcd}
\tau_{\geq 0}C \arrow[r, "e_0"] \arrow[dr, "e"] & L(\tau_{\geq 0}C)\arrow[d, "\ell"] \arrow[r, "\ell_0"] & \tau_{\geq 0}C \arrow[dl, "e"]\\
& C.
\end{tikzcd}
\]
By the universal property of $\tau_{\geq 0}$ we get that $\ell_0e_o=id_{\tau_{\geq 0}C}$. Hence, $\tau_{\geq 0}C$ is a direct summand of $L(\tau_{\geq 0}C)$, so it is $\bcube$-local as required.
\end{proof}

\begin{cor}
\label{truncationtr}
Let $C\in \D_{\dNis}(\PSh^{\rm ltr}(\SmlSm(k), \Lambda))$, and suppose that $C$ is $\bcube$-local.  Then for all $n\in \Z$, the truncated complexes $\tau^{tr}_{\geq n}C$ and $\tau^{tr}_{\leq n}C$ are $\bcube$-local.
\end{cor}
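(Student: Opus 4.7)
The plan is to reduce Corollary \ref{truncationtr} to Theorem \ref{truncation} via the forgetful functor
\[
\gamma^* \colon \D_{\dNis}(\Shltr(k,\Lambda)) \longrightarrow \D_{\dNis}(\Shlog(k,\Lambda))
\]
induced by the graph functor $\gamma\colon \lSm(k)\to \lCor(k)$.

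First I would verify that $\gamma^*$ is \emph{exact} on the abelian categories of sheaves. At the presheaf level $\gamma^*$ is simply restriction along $\gamma$, which is exact as a functor of abelian categories. Since the $\dNis$ topology on $\lSm(k)$ is compatible with log transfers (\cite[Prop.\ 4.5.4, Thm.\ 4.5.7]{bpo}), $\gamma^*$ sends $\dNis$-sheaves to $\dNis$-sheaves and the restriction to the subcategories of sheaves remains exact. Consequently $\gamma^*$ commutes with the homology-sheaf functors and with the truncation functors; explicitly, $\gamma^*(\tau^{tr}_{\geq n}C) \simeq \tau_{\geq n}(\gamma^* C)$ and $\gamma^*(\tau^{tr}_{\leq n}C) \simeq \tau_{\leq n}(\gamma^* C)$ in $\D_{\dNis}(\Shlog(k,\Lambda))$.

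Next I would check that $\gamma^*$ \emph{preserves and reflects $\bcube$-locality}. By definition, for $D \in \D_{\dNis}(\Shltr(k,\Lambda))$ and $X\in \lSm(k)$, the complex $\gamma^*D$ has the same underlying sections as $D$, and by compatibility of the $\dNis$ topology with log transfers, the $\dNis$-hypercohomology of $D$ coincides with that of $\gamma^*D$ (see again \cite[Thm.\ 4.5.7]{bpo}). Hence the map $R\Gamma_{\dNis}(X,D)\to R\Gamma_{\dNis}(X\times\bcube,D)$ is a quasi-isomorphism if and only if the corresponding map for $\gamma^*D$ is, so that $D$ is $\bcube$-local precisely when $\gamma^*D$ is.

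Combining these two observations, the corollary follows immediately: since $C$ is $\bcube$-local, $\gamma^*C$ is $\bcube$-local in $\D_{\dNis}(\Shlog(k,\Lambda))$; by Theorem \ref{truncation}, $\tau_{\geq n}(\gamma^*C)$ and $\tau_{\leq n}(\gamma^*C)$ are $\bcube$-local. Translating back through the identifications $\tau_{\geq n}(\gamma^*C)\simeq \gamma^*(\tau^{tr}_{\geq n}C)$ and $\tau_{\leq n}(\gamma^*C)\simeq \gamma^*(\tau^{tr}_{\leq n}C)$ and using the reflection property of $\gamma^*$, one concludes that $\tau^{tr}_{\geq n}C$ and $\tau^{tr}_{\leq n}C$ are $\bcube$-local. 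I do not anticipate any genuine obstacle in this argument: the only non-formal input is the compatibility of the $\dNis$ topology with log transfers, which is already established in Section \ref{sec:preliminaries}. (Alternatively, one could re-run the proof of Theorem \ref{truncation} verbatim in the transfer setting, after transporting the connectivity Theorem \ref{connectivity} through $\gamma^*$, but the present reduction is more economical.)
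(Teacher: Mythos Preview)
Your proposal is correct and follows essentially the same approach as the paper: both arguments reduce to Theorem \ref{truncation} via the forgetful functor $\gamma^*$, using that $\gamma^*$ is $t$-exact (so commutes with truncations) and that it preserves and reflects $\bcube$-locality because $R\Gamma_{\dNis}(X,D)\simeq R\Gamma_{\dNis}(X,\gamma^*D)$.
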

\begin{proof}
As in the proof of Theorem \ref{truncation}, it is enough to prove the statement for $\tau_{\geq 0}C$.
Recall that the graph functor $\gamma\colon \SmlSm(k) \to \SmlCor(k)$, which sends a map $X\to Y$ to the finite correspondence $X\xrightarrow{\gamma(f)}Y$ induced by its graph,  is faithful: the category $\SmlCor$ is, by definition, the full subcategory of $\lCor(k)$ consisting of all objects in $\SmlSm(k)$ (it is denoted $lCor_{SmlSm}/k$ in \cite{bpo}). Presheaves with log transfers on $\SmlSm(k)$ are, by definition, presheaves (of $\Lambda$-modules) on $\SmlCor(k)$.

The $\dNis$-topology is compatible with log transfers by \cite[Theorem 4.5.7]{bpo}, hence $\gamma$ induces a functor 
\[\gamma^*\colon  \D_{\dNis}(\PSh^{\rm ltr}(\SmlSm(k), \Lambda)) \to \D_{\dNis}(\PSh(\SmlSm(k), \Lambda)) .\] 
It is immediate so see that $\gamma^*$ is $t$-exact, conservative and preserves flasque sheaves, hence and for all $X\in \SmlSm(k)$ and $F\in  \D_{\dNis}(\PSh^{\rm ltr}(\SmlSm(k), \Lambda))$, we have
\[ R\Gamma(X,\gamma^*F)=R\Gamma(X,F)\] 
In particular $F$ is $\bcube$-local if and only if $\gamma^*F$ is. To prove the Corollary, it  is then enough to show that $\gamma^*(\tau^{tr}_{\geq 0}C)$ is $\bcube$-local. But since $\gamma^*$ is  $t$-exact, we have  $\gamma^*(\tau^{tr}_{\geq 0}C)=\tau_{\geq 0}\gamma^*C$, which is $\bcube$-local by Theorem \ref{truncation}.
\end{proof}

\begin{definition}(see \cite[Def. 5.2.2]{bpo})
Let $F\in \Shv_{\dNis}^{\rm log}(k, \Lambda)$ (resp.\ $F \in \Shv_{\dNis}^{\rm ltr}(k, \Lambda)$). We say that $F$ is  \emph{strictly $\bcube$-invariant} if the cohomology presheaves $\mathbf{H}^i_{\dNis}(\_,F)$ are $\bcube$-invariant.

Analogously to \cite{shuji}, we denote by $\CIlog_{\dNis}$ (resp. $\CIltr_{\dNis}$) the full subcategory of $\Shlog(k,\Lambda)$ (resp. $\Shltr(k,\Lambda)$) of \emph{strictly} $\bcube$-invariant sheaves.
\end{definition}
\begin{remark} Note that the above definition is slightly non-standard: in the context of reciprocity sheaves we typically write $\CI_{\Nis}$ for the category of $\bcube$-invariant Nisnevich sheaves, without ``strictness'' condition, i.e. without asking the property that the cohomology presheaves are $\bcube$-invariant. If $F\in \CI_{\Nis}$ is moreover semipure in the sense of \cite[Def. 1.28]{S-purity}, the fact that the cohomology presheaves are $\bcube$-invariant (at least when restricted to the subcategory $\ulMCor_{ls}$ defined in \emph{loc.cit.}) is indeed a difficult result due to S.~Saito, \cite[Thm. 9.3]{S-purity}. In the $\A^1$-invariant context, the analogous statement is due to Voevodsky \cite[\S 24]{MVW}.
\end{remark}
Recall that, in general, a sheaf $F$ seen as an object of $\D_{\dNis}(\PSh^{\rm t}(k, \Lambda))$ for $t\in \{{\rm log}, {\rm ltr}\}$ is $\bcube$-local if and only if it is strictly $\bcube$-invariant.
\begin{cor}
\label{heart}
Let $C\in \D_{\dNis}(\PSh^{\rm t}(k, \Lambda))$ where $t\in \{{\rm log}, {\rm ltr}\}$. Then the following are equivalent:
\begin{enumerate}
    \item[(a)] $C$ is $\bcube$-local
    \item[(b)] the homology sheaves $a_{\dNis}H_iC$ are strictly $\bcube$-invariant for every $i\in \Z$. 
\end{enumerate}
\begin{proof} The implication $(b)\Rightarrow (a)$ holds very generally, and comes from a spectral sequence argument. The converse implication $(a)\Rightarrow (b)$ comes from the fact that $a_{\dNis}H_iC[i]=\tau_{\geq i}\tau_{\leq i}C$ and Theorem \ref{truncation}.\end{proof}
\end{cor}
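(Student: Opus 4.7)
The plan is to prove the two implications separately, using the standard hypercohomology spectral sequence in one direction and the truncation results (Theorem \ref{truncation} and Corollary \ref{truncationtr}) in the other. Both directions are essentially formal once the connectivity theorem is at disposal, so I expect no serious obstacles.

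For the implication $(b)\Rightarrow(a)$, I would use the hypercohomology (descent) spectral sequence
\[
E_2^{p,q}=\mathbf{H}^p_{\dNis}(X,\, a_{\dNis}H_{-q}(C)) \;\Longrightarrow\; \mathbf{H}^{p+q}_{\dNis}(X,\, C)
\]
which exists for both $X$ and $X\times \bcube$ (and is functorial in the projection $X\times\bcube \to X$, thanks to Proposition \ref{cohdim} ensuring finite $\dNis$-cohomological dimension, which gives the needed convergence for unbounded complexes). Under hypothesis (b), each $a_{\dNis}H_i(C)$ is strictly $\bcube$-invariant, so the pullback along $X\times \bcube\to X$ induces an isomorphism on every $E_2$-term; the spectral sequence comparison then forces an isomorphism of the abutments, which is exactly the statement that $C$ is $\bcube$-local.

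For the implication $(a)\Rightarrow(b)$, fix $i\in \Z$. Since the truncation functors $\tau_{\leq i}$ and $\tau_{\geq i}$ preserve $\bcube$-local complexes by Theorem \ref{truncation} (respectively Corollary \ref{truncationtr} in the case $t=\mathrm{ltr}$), the complex $\tau_{\geq i}\tau_{\leq i}C$ is again $\bcube$-local. But this complex is concentrated in degree $i$ and canonically identified with $a_{\dNis}H_i(C)[i]$ via the standard $t$-structure. Unpacking Definition \ref{def:local-obj-local-complx}, being $\bcube$-local for a complex concentrated in a single degree $i$ means exactly that
\[
\mathbf{H}^p_{\dNis}(X,\, a_{\dNis}H_i(C)) \;\xrightarrow{\sim}\; \mathbf{H}^p_{\dNis}(X\times \bcube,\, a_{\dNis}H_i(C))
\]
is an isomorphism for every $X$ and every $p$, i.e.\ that the cohomology presheaves of $a_{\dNis}H_i(C)$ are $\bcube$-invariant. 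This is precisely strict $\bcube$-invariance.

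The only point that requires attention is the case $t=\mathrm{ltr}$: one must make sure that the truncation argument applies inside $\D_{\dNis}(\PSh^{\mathrm{ltr}}(k,\Lambda))$ and that strict $\bcube$-invariance of a sheaf with log transfers can be tested after forgetting transfers. Both are taken care of by the compatibility of the $\dNis$-topology with log transfers (\cite[Thm.\ 4.5.7]{bpo}) and by the $t$-exactness and conservativity of $\gamma^*$ already used in the proof of Corollary \ref{truncationtr}, so no new argument is needed.
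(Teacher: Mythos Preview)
Your proof is correct and follows essentially the same approach as the paper: the spectral sequence argument for $(b)\Rightarrow(a)$ and the identification $a_{\dNis}H_iC[i]=\tau_{\geq i}\tau_{\leq i}C$ together with Theorem \ref{truncation} (resp.\ Corollary \ref{truncationtr}) for $(a)\Rightarrow(b)$. You have in fact filled in details the paper leaves implicit, such as invoking Proposition \ref{cohdim} for convergence of the spectral sequence and explicitly treating the $t=\mathrm{ltr}$ case via Corollary \ref{truncationtr}.
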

The following Proposition is an instance of the more general fact that if $C \rightleftarrows D$ is an adjoint pair of triangulated categories equipped with $t$-structures such that the left adjoint is right $t$-exact, then the induced functors between the hearts are still adjoint. See e.g.~\cite[Prop. 1.3.17-(iii)]{BBD}.
\begin{prop}
\label{adjunction}
The inclusion $i\colon \CI_{\dNis}^{\rm log}\hookrightarrow \Shlog(k,\Lambda)$ (resp. $i^{\textrm{tr}}\colon\CI_{\dNis}^{\rm ltr}\hookrightarrow \Shltr(k,\Lambda)$) has a left adjoint 
\[h_0:=a_{\dNis}H_0L(-[0])\]
(resp. $h_0^{\textrm{ltr}}:=a_{\dNis}H_0^{tr}L^{tr}(-[0])$).


\end{prop}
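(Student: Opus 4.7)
The plan is to check that the formula $h_0(F) := a_{\dNis} H_0 L(F[0])$ lands in $\CI^{\rm log}_{\dNis}$ and then verify the adjunction by a short chain of natural identifications, exploiting the connectivity and truncation results established earlier.

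First I would observe that for any sheaf $F\in \Shlog(k,\Lambda)$, the complex $F[0]$ is concentrated in degree $0$, hence locally $(-1)$-connected. By the connectivity theorem (Theorem \ref{connectivity}), its $(\dNis,\bcube)$-fibrant replacement $L(F[0])$ is again locally $(-1)$-connected, so $L(F[0]) \in \D^{\geq 0}_{\dNis}(\PSh^{\rm log}(k,\Lambda))$. In particular $\tau_{\geq 0}\tau_{\leq 0}L(F[0]) = a_{\dNis}H_0 L(F[0])[0]$. Since $L(F[0])$ is $\bcube$-local, Corollary \ref{heart} implies that $a_{\dNis}H_0 L(F[0])$ is strictly $\bcube$-invariant, so $h_0(F)\in \CI^{\rm log}_{\dNis}$.

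Next I would verify the adjunction. Let $G\in \CI^{\rm log}_{\dNis}$, so that $G[0]$ is $\bcube$-local (by the equivalence recalled right before Corollary \ref{heart}). Using that the embedding of the heart of the standard $t$-structure into $\D_{\dNis}(\PSh^{\rm log}(k,\Lambda))$ is fully faithful, that $L(F[0])\in \D^{\geq 0}$, and that $G[0]\in \D^{\leq 0}$, I would write the chain
\begin{align*}
\Hom_{\CI^{\rm log}_{\dNis}}(h_0(F),G)
&= \Hom_{\Shlog}(a_{\dNis}H_0 L(F[0]), G) \\
&= \Hom_{\D_{\dNis}}(\tau_{\geq 0}\tau_{\leq 0}L(F[0]), G[0]) \\
&= \Hom_{\D_{\dNis}}(L(F[0]), G[0]) \\
&= \Hom_{\D_{\dNis}}(F[0], G[0]) \\
&= \Hom_{\Shlog}(F, G),
\end{align*}
where the third equality uses $G[0]\in \D^{\leq 0}$, and the fourth uses that $G[0]$ is $(\dNis,\bcube)$-local so that the localization map $F[0]\to L(F[0])$ induces an isomorphism on $\Hom(-, G[0])$. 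All of these identifications are clearly natural in both variables.

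The transfer version is entirely parallel: one replaces $L$ by its analogue $L^{\tr}$ on $\D_{\dNis}(\PSh^{\rm ltr}(k,\Lambda))$, uses Corollary \ref{truncationtr} in place of Theorem \ref{truncation}, and invokes the transfer variant of Corollary \ref{heart}. The only mildly delicate point in the whole argument is confirming that $L(F[0])$ lies in $\D^{\geq 0}$, which is exactly where Theorem \ref{connectivity} is essential; once that is in place, the rest is a formal manipulation with the $t$-structure and the universal property of the Bousfield localization $L$.
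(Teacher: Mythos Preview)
Your proof is correct and follows essentially the same approach as the paper. The paper does not spell out the argument but simply observes that this is an instance of the general fact \cite[Prop.~1.3.17(iii)]{BBD}: given an adjunction of triangulated categories with $t$-structures whose left adjoint is right $t$-exact, the induced functors between the hearts are adjoint. Here the adjunction is $L \dashv \iota$ between $\D_{\dNis}$ and its $\bcube$-local subcategory, and the right $t$-exactness of $L$ is precisely Theorem~\ref{connectivity}; your chain of Hom identifications is exactly an unpacking of the BBD argument in this concrete situation.
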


We can finally state the promised result on the existence of the $t$-structure on the category of motives.
\begin{thm}
\label{tstructure}
Consider the inclusions
\begin{align}
\lDA(k,\Lambda) & \hookrightarrow \D_{\dNis}(\PSh^{\rm log}(k, \Lambda))  \\
\lDM(k,\Lambda) &\hookrightarrow\D_{\dNis}(\PSh^{\rm ltr}(k, \Lambda)) 
\end{align}
that identify $\lDA(k,\Lambda)$ (resp. $\lDM(k,\Lambda)$) with the subcategory of $\bcube$-local complexes. Then the standard $t$-structure of $\D_{\dNis}(\PSh^{\rm log}(k, \Lambda)) $ (resp. of $\D_{\dNis}(\PSh^{\rm ltr}(k, \Lambda)) $) restricts to a $t$-structure on the category of motives $\lDA(k,\Lambda)$ (resp. $\lDM(k,\Lambda)$), called the \emph{homotopy $t$-structure.}

The heart of this $t$-structre is naturally equivalent to $\CIlog_{\dNis}$ (resp. $\CIltr_{\dNis}$), which is then a Grothendieck abelian category. 

\begin{proof} The first assertion follows directly from Theorem \ref{truncation} (resp. Corollary \ref{truncationtr}), the second from Corollary \ref{heart} and Proposition \ref{adjunction}.	
The fact that the heart of a
$t$-structure is abelian is well-known \cite{BBD}.

Next, note that the homotopy $t$-structure is clearly accessible in the sense of \cite[Definition 1.4.4.12]{ha}.
	
	Moreover, filtered colimits commute with cohomology, hence if $\{F_\alpha\}$ is a filtered system of $(\dNis,\bcube)$ fibrant objects, then $\colim F_\alpha$ is $(\dNis,\bcube)$ fibrant since it is $\dNis$-fibrant (as observed in the proof of Theorem \ref{thm;loc}) and\[
	\mathbf{H}^i(X,\colim F_\alpha)=\colim\mathbf{H}^i(X, F_\alpha)\cong \mathbf{H}^i(X\times \bcube,\colim F_\alpha).
	\] 
	So if $H_i^{\bcube}F_\alpha = 0$ for $i\geq 0$ and all $\alpha$, then\[
	H_i^{\bcube}(\colim F_\alpha) = H_i(\colim F_{\alpha}) = \colim H_i F_{\alpha} = 0
	\]
	Hence the $t$-structure is compatible with colimits in the sense of \cite[1.3.5.20]{ha}.
	
	In particular, as observed in \cite[Remark 1.3.5.23]{ha}, the categories $\CIlog_{\dNis}$ and $\CIltr_{\dNis}$ are Grothendieck abelian categories.
\end{proof}
\end{thm}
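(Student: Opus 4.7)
The plan is to build the $t$-structure in three steps, each leveraging machinery already set up in the excerpt. First, I would deduce the restriction of the standard $t$-structure to $\lDA(k,\Lambda)$ (and analogously $\lDM(k,\Lambda)$) directly from Theorem \ref{truncation} (resp.\ Corollary \ref{truncationtr}): these already state that the truncation functors $\tau_{\leq n}$ and $\tau_{\geq n}$ preserve the subcategory of $\bcube$-local complexes. Since the defining distinguished triangles $\tau_{\leq n}C\to C\to \tau_{\geq n+1}C$ and the vanishing of $\Hom(\tau_{\geq n+1}C,\tau_{\leq n}D)$ are inherited automatically by any full triangulated subcategory stable under the truncations, no further axiom checking is required.

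Second, I would identify the heart. A complex $C$ lies in the heart of the ambient $t$-structure exactly when $a_{\dNis}H_i(C)=0$ for $i\neq 0$, so $C\simeq a_{\dNis}H_0(C)[0]$. For such a $C$ to sit inside the motivic category it must additionally be $\bcube$-local, and by Corollary \ref{heart} this is equivalent to $a_{\dNis}H_0(C)$ being strictly $\bcube$-invariant. Thus the functor $C\mapsto a_{\dNis}H_0(C)$ exhibits the heart as equivalent to $\CIlog_{\dNis}$ (resp.\ $\CIltr_{\dNis}$), with quasi-inverse $F\mapsto F[0]$; Proposition \ref{adjunction} moreover identifies the functor $h_0$ with the reflection onto the heart. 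In particular the heart is abelian by the general theory of $t$-structures \cite{BBD}.

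Finally, to conclude that the heart is Grothendieck abelian, rather than checking Grothendieck's axioms directly I would invoke \cite[Remark 1.3.5.23]{ha}: it suffices to show that the homotopy $t$-structure is accessible and compatible with filtered colimits in the sense of \cite[\S 1.3.5]{ha}. Accessibility is essentially formal, since $\lDA(k,\Lambda)$ and $\lDM(k,\Lambda)$ are presentable. Compatibility with filtered colimits reduces to the statement that a filtered colimit of $n$-connective $\bcube$-local objects remains $n$-connective and $\bcube$-local. Connectivity is automatic because taking homology sheaves commutes with filtered colimits. The $\bcube$-locality requires two ingredients: Lemma \ref{lm;fcolimfib}, which shows that $\dNis$-fibrancy is preserved under filtered colimits, and the fact that $\dNis$-hypercohomology on a fixed $X$ and on $X\times\bcube$ commutes with filtered colimits, which in turn follows from the finite cohomological dimension bound of Proposition \ref{cohdim}. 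I expect this last step --- the verification that $\bcube$-locality is stable under filtered colimits --- to be the main technical point, since accessibility and the identification of the heart are both fairly formal once Theorem \ref{connectivity} and Corollary \ref{heart} are in hand.
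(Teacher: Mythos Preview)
Your proposal is correct and follows essentially the same route as the paper: restriction of the $t$-structure via Theorem \ref{truncation}/Corollary \ref{truncationtr}, identification of the heart via Corollary \ref{heart} and Proposition \ref{adjunction}, and the Grothendieck property via \cite[Remark 1.3.5.23]{ha} after checking accessibility and stability of $\bcube$-locality under filtered colimits (using Lemma \ref{lm;fcolimfib} and commutation of hypercohomology with filtered colimits). The only cosmetic difference is that you cite Proposition \ref{cohdim} explicitly for the commutation step, while the paper leaves this implicit.
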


\begin{prop}\label{prop;rightadjoint}
	The inclusions $i\colon \CI_{\dNis}^{\rm log}\hookrightarrow \Shlog(k,\Lambda)$ (resp. $i^{\textrm{tr}}\colon\CI_{\dNis}^{\rm ltr}\hookrightarrow \Shltr(k,\Lambda)$) has a right adjoint $h^0$ (resp. $h^0_{\tr}$) such that for $F\in \Shlog$ (resp. $\Shltr$):\[ih^0F(X)=\Hom_{\Shlog}(h_0(a_{\dNis}(\Lambda(X))),F)\]
	(resp. $i^{\textrm{ltr}}h^0_{\textrm{ltr}}F(X)=  \Hom_{\Shltr}(i^{\textrm{ltr}}h_0^{\textrm{ltr}}(a_{\dNis}(\Lambda_{\tr}(-))),F)$).
	\end{prop}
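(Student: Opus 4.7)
The plan is to apply the adjoint functor theorem for locally presentable categories to the inclusion $i\colon \CIlog_{\dNis}\hookrightarrow \Shlog(k,\Lambda)$ (and analogously to $i^{\tr}$). Since both categories are Grothendieck abelian by Theorem \ref{tstructure}, they are locally presentable, so it will suffice to check that $i$ preserves all small colimits; the right adjoint $h^0$ will then exist abstractly, and the explicit formula will follow by a straightforward Yoneda computation.

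First I would establish that $i$ is exact. This is essentially built into the statement of Theorem \ref{tstructure}: the homotopy $t$-structure on $\lDA(k,\Lambda)$ is obtained by restricting the standard $t$-structure on $\D_{\dNis}(\Shlog)$ to the subcategory of $\bcube$-local complexes, so the embedding $\lDA(k,\Lambda)\hookrightarrow \D_{\dNis}(\Shlog)$ is $t$-exact by construction. Passing to the hearts yields an exact inclusion $\CIlog_{\dNis}\hookrightarrow \Shlog(k,\Lambda)$, which in particular preserves finite colimits and monomorphisms.

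Next I would check preservation of filtered colimits, which is precisely the compatibility of the homotopy $t$-structure with colimits that was used in the proof of Theorem \ref{tstructure}: since $\dNis$-fibrancy on $\lSm(k)$ is preserved under filtered colimits (Lemma \ref{lm;fcolimfib}) and cohomology commutes with filtered colimits, a filtered colimit of strictly $\bcube$-invariant sheaves remains strictly $\bcube$-invariant, so $i$ preserves filtered colimits. Combined with exactness, $i$ is cocontinuous, and the (special) adjoint functor theorem for locally presentable categories produces the desired right adjoint $h^0$.

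Finally, the explicit formula will drop out from Yoneda and the chain of two adjunctions $h_0\dashv i\dashv h^0$: for $F\in \Shlog(k,\Lambda)$ and $X\in \SmlSm(k)$,
\begin{align*}
ih^0 F(X) &= \Hom_{\Shlog}(a_{\dNis}\Lambda(X),\, ih^0 F) \\
&= \Hom_{\CIlog_{\dNis}}(h_0(a_{\dNis}\Lambda(X)),\, h^0 F) \\
&= \Hom_{\Shlog}(ih_0(a_{\dNis}\Lambda(X)),\, F),
\end{align*}
where the first identity is Yoneda for $\dNis$-sheaves, the second is $h_0\dashv i$ from Proposition \ref{adjunction}, and the third is the newly-constructed $i\dashv h^0$. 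The transfer version proceeds \emph{verbatim}, replacing $a_{\dNis}\Lambda(X)$ by $a_{\dNis}\Lambda_{\tr}(X)$ and invoking \cite[Thm.~4.5.7]{bpo} to ensure compatibility of the $\dNis$-topology with log transfers. The only nontrivial input is the exactness (really, the cocontinuity) of $i$; this is precisely what Theorem \ref{tstructure} was set up to deliver, so the remaining steps are formal.
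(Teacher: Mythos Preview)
Your proposal is correct and follows essentially the same route as the paper: both apply the Special Adjoint Functor Theorem by verifying that $i$ is cocontinuous (filtered colimits via Lemma~\ref{lm;fcolimfib}, finite colimits via exactness), and both derive the formula by the same Yoneda chain using $h_0\dashv i\dashv h^0$. The only difference is that for cokernel preservation the paper gives an explicit computation identifying $\mathrm{coker}_{\Shv_{\dNis}}(F\to G)$ with $a_{\dNis}H_0$ of a $(\dNis,\bcube)$-fibrant cofiber, whereas you invoke the $t$-exactness of the inclusion $\lDA(k,\Lambda)\hookrightarrow \D_{\dNis}(\Shlog)$ directly; your argument is slightly slicker but amounts to the same content.
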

\begin{proof} We prove the assertion for $\CI_{\dNis}^{\rm log}$, since the statement for $\CI_{\dNis}^{\rm ltr}$ is identical. First, note that if the right adjoint $h^0$ exists, then for $F\in \Shv_\dNis$ and $X\in \SmlSm(k)$, we have
	\begin{multline*}
	i h^0(F) (X) = \Hom_{\Shv_{\dNis}}(a_{\dNis}\Lambda(X),ih^0F) =\Hom_{\CIlog_{\dNis}}(h_0(a_{\dNis}\Lambda(X)),h^0F)=\\
	\Hom_{\Shv_{\dNis}}(ih_0(a_{\dNis}\Lambda(X)),F).
	\end{multline*}
	as required. Hence we only have to prove that $h^0$ exists.
	
	By the Special Adjoint Functor Theorem (see \cite[p. 130]{maclane}), a functor between two Grothendieck abelian categories has a right adjoint if and only if it preserves all (small) colimits, so we need to show that this holds for  $i\colon \CI_{\dNis}^{\rm log}\to \Shlog(k,\Lambda)$, i.e. that  $\CIlog$ is closed under small colimits in $\Shlog(k, \Lambda)$.
	
	As it was observed in the proof of Theorem \ref{tstructure}, $\CIlog$ is stable under filtered colimits. Since (small) colimits are filtered colimits of finite colimits, it is enough to show that $\CIlog$ is stable under finite limits. Since it is an abelian subcategory, it is enough to show that it is stable under cokernels.
	
	Let $F,G\in \CIlog_{\dNis}$ and let $F\to G$ be a map in $\Shv_{\dNis}$. Then we have that\[
	\textrm{coker}_{\Shv_{\dNis}}(F\to G)=a_{\dNis}H_0(\textrm{Cofib}(F_{\dNis}\to G_{\dNis})),
	\]
	where $F_{\dNis}$ and $G_{\dNis}$ denote the $\dNis$-fibrant replacements. Since $F$ and $G$ are strictly $\bcube$-local, $F_{\dNis}$ and $G_{\dNis}$ are $(\dNis,\bcube)$-fibrant, hence $\textrm{Cofib}(F_{\dNis}\to G_{\dNis})$ is also $(\dNis,\bcube)$-fibrant.
	
	In particular,\begin{multline*}
	\textrm{coker}_{\Shv_{\dNis}}(F\to G)\simeq a_{\dNis}H_0(\textrm{Cofib}(F_{\dNis}\to G_{\dNis}))\\ \simeq a_{\dNis}H_0(L^{\tr}(\textrm{Cofib}(F_{\dNis}\to G_{\dNis}))) \overset{(*)}{\simeq} \textrm{coker}_{\CIlog_{\dNis}}(F\to G),
	\end{multline*}
	where $(*)$ comes from Proposition \ref{adjunction} and the fact that $h_0$ preserves colimits.
	\end{proof}

\begin{cor}\label{cor;ext} Let $G\in \CI_{\dNis}^{\rm log}$ (resp.\ $G\in \CI_{\dNis}^{\rm ltr}$). 
Then \[\uExt^i_{\Shlog}(F,G)\in \CI^{\rm log}_{\dNis}\] (resp. $\uExt^i_{\Shltr}(F,G)\in \CI^{\rm ltr}_{\dNis}$) for every $F\in \Shlog(k,\Lambda)$ (resp.\ $F\in \Shltr(k,\Lambda)$)  
\begin{proof}
We only prove it for $\Shlog$, the proof for $\Shltr$ is identical.
Let $G[0]\to G_{\dNis}$ be a $\dNis$-fibrant replacement, hence\[
\uExt^i_{\Shlog}(F,G)=a_{\dNis}H_i(\uHom(F,G_{\dNis})).
\] 
Note that every $X\in \SmlSm(k)$, we have an isomorphism 
\[\uHom(\Lambda(X),G_{\dNis})\cong \uHom(\Lambda(X\times \bcube),G_{\dNis}),\] since by adjunction we have
\begin{align*}
\Gamma(Y,\uHom(\Lambda(X),G_{\dNis})) & \cong \Gamma(Y\times X,G_{\dNis}) \\
& \cong \Gamma(Y\times X\times \bcube,G_{\dNis}) \\ &\cong  \Gamma(Y,\uHom(\Lambda(X\times \bcube),G_{\dNis}))\\
\end{align*}
From this it easily follows that $\uHom(F,G_{\dNis})$ is $\bcube$-local, hence we conclude by Theorem \ref{tstructure} 
\end{proof}
\end{cor}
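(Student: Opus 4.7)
The plan is to reduce the statement to the claim that the internal Hom complex $\uHom(F, G_{\dNis})$ is $(\dNis, \bcube)$-fibrant, where $G[0] \to G_{\dNis}$ denotes a $\dNis$-fibrant replacement in $\Cpx(\Shlog(k, \Lambda))$ (resp.\ $\Cpx(\Shltr(k, \Lambda))$). Since $G$ is strictly $\bcube$-invariant by hypothesis, its $\dNis$-fibrant replacement $G_{\dNis}$ is already $(\dNis, \bcube)$-fibrant. By definition of the derived internal Hom one has
\[ \uExt^i_{\Shlog}(F, G) = a_{\dNis} H_i \uHom(F, G_{\dNis}), \]
so once we know $\uHom(F, G_{\dNis})$ is $(\dNis, \bcube)$-fibrant, Corollary \ref{heart} applied to this complex gives that every $a_{\dNis} H_i \uHom(F, G_{\dNis})$ is strictly $\bcube$-invariant, which is the desired conclusion.

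To establish the claim, I would first appeal to Lemma \ref{rmk;magic}: for each $Y \in \SmlSm(k)$, the complex $\uHom(\Lambda(Y), G_{\dNis})$ is itself $(\dNis, \bcube)$-fibrant, and in particular $\bcube$-local. Consequently, the projection $Y \times \bcube \to Y$ induces a quasi-isomorphism
\[ \uHom(\Lambda(Y), G_{\dNis}) \xrightarrow{\simeq} \uHom(\Lambda(Y \times \bcube), G_{\dNis}). \]
For general $F$, the tensor-hom adjunction yields
\[ \uHom(F, G_{\dNis})(Y) = \Hom^{\bullet}\bigl(F, \uHom(\Lambda(Y), G_{\dNis})\bigr), \]
where $\Hom^{\bullet}$ denotes the mapping complex. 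Since $\Hom^{\bullet}(F, -)$ preserves quasi-isomorphisms between $\dNis$-fibrant targets, the previous display gives
\[ \uHom(F, G_{\dNis})(Y) \xrightarrow{\simeq} \uHom(F, G_{\dNis})(Y \times \bcube), \]
so $\uHom(F, G_{\dNis})$ is $\bcube$-local. The $\dNis$-fibrancy of $\uHom(F, G_{\dNis})$ is obtained in parallel: presenting $F$ as a colimit of representables $F = \colim_\alpha \Lambda(X_\alpha)$ (resp.\ $\Lambda_{\tr}(X_\alpha)$) gives $\uHom(F, G_{\dNis}) = \lim_\alpha \uHom(\Lambda(X_\alpha), G_{\dNis})$, a small limit of $\dNis$-fibrant objects, which is again $\dNis$-fibrant in the projective model structure.

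The main technical point is justifying that $\Hom^{\bullet}(F, -)$ preserves weak equivalences between $\dNis$-fibrant complexes; this is routine once one works in the projective model structure, where the targets are levelwise fibrant and $F$ (placed in degree zero) can be resolved by representables, reducing everything to Lemma \ref{rmk;magic}. The transfers case is handled identically, replacing $\otimes_\Lambda$ by the log-transfers tensor product $\otimes^{\ltr}$ and using the version of Lemma \ref{rmk;magic} for $\PShltr$ indicated in Remark \ref{rmk;loctransfers}; the compatibility of the $\dNis$-topology with log transfers \cite[Thm.~4.5.7]{bpo} ensures that the same adjunction argument and the same identification $\uExt^i_{\Shltr}(F, G) = a_{\dNis} H_i \uHom(F, G_{\dNis})$ hold in that setting.
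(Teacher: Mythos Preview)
Your proof takes essentially the same route as the paper: identify $\uExt^i(F,G)$ with $a_{\dNis}H_i\uHom(F,G_{\dNis})$, show that $\uHom(F,G_{\dNis})$ is $\bcube$-local, and conclude via Corollary~\ref{heart}. You in fact supply more detail than the paper, which simply asserts ``from this it easily follows that $\uHom(F,G_{\dNis})$ is $\bcube$-local'' after checking the representable case.

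One step deserves care: the claim that a small \emph{strict} limit of $\dNis$-fibrant complexes is again $\dNis$-fibrant is not true in general. In a Bousfield localization, locality (hence fibrancy, given that all objects are projectively fibrant here) is preserved by homotopy limits, not arbitrary strict limits, and the canonical presentation $F=\colim_\alpha\Lambda(X_\alpha)$ is not filtered. This is easily circumvented without changing your strategy: rather than establishing $\dNis$-fibrancy, argue directly at the derived level. For any $X$ the map $\Lambda(X\times\bcube)\otimes^L F\to\Lambda(X)\otimes^L F$ is a $(\dNis,\bcube)$-local equivalence (this is the observation, used at the beginning of the proof of Theorem~\ref{thm;loc}, that $\Lambda(\bcubered)\otimes^L K$ is $(\dNis,\bcube)$-locally acyclic for every $K$). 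Since $G$ is $\bcube$-local, the derived tensor--hom adjunction gives
\[
R\Gamma_{\dNis}(X,R\uHom(F,G))\simeq R\Gamma_{\dNis}(X\times\bcube,R\uHom(F,G)),
\]
so $R\uHom(F,G)$ is $\bcube$-local in $\D_{\dNis}$, and Corollary~\ref{heart} applies. This is presumably what the paper's ``easily follows'' abbreviates; with this adjustment your argument is complete and matches the paper's.
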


\begin{thm}
\label{nonderivedpurity}
Let $F\in \CIlog_{\dNis}$ (resp. $F\in \CIltr_{\dNis}$). Then for all $X\in \SmlSm(k)$ and $U\subseteq X$ an open dense, the restriction $F(X)\to F(U)$ is injective.

\begin{proof} As before, we give a proof for the version without transfers. 
Let $F[0]\to G$ be a $\dNis$-fibrant replacement. Since $F[0]$ is $\bcube$-local, $G$ is $(\dNis,\bcube)$-fibrant.
Since $F=a_{\dNis}H_0G$, the  result follows from Theorem \ref{cor;purity}.
\end{proof}
\end{thm}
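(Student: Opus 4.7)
The plan is to deduce the statement directly from the purity result Corollary \ref{cor;purity}, by using the identification of strictly $\bcube$-invariant sheaves with the zeroth (sheafified) homology of $(\dNis,\bcube)$-fibrant complexes. I will argue the case without transfers in detail; the version with log transfers is formally identical, since the $\dNis$-topology is compatible with log transfers and the forgetful functor $\gamma^{*}$ is $t$-exact and conservative.

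First I would fix a $\dNis$-fibrant replacement $F[0] \to G$ in $\Cpx(\PShlog(k, \Lambda))$. Since $F \in \CIlog_{\dNis}$ is strictly $\bcube$-invariant, Corollary \ref{heart} guarantees that $F[0]$ is $\bcube$-local as an object of $\D_{\dNis}(\PShlog(k, \Lambda))$, and hence that $G$ is automatically $(\dNis,\bcube)$-fibrant. By construction one has
\[
F \;\cong\; a_{\dNis} H_0(G).
\]

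Next I would apply Corollary \ref{cor;purity} to the $(\dNis,\bcube)$-fibrant complex $G$. For every connected $X \in \SmlSm(k)$ with generic point $\eta_X$, it yields an injection
\[
F(X) \;=\; a_{\dNis} H_0(G)(X) \;\hookrightarrow\; H_0(G(\eta_X,\triv)) \;=\; F(\eta_X,\triv),
\]
where the right-hand identification uses that $\partial X$ is a strict normal crossing divisor and therefore has trivial stalk at $\eta_X$, together with the compatibility of $H_0$ with filtered colimits.

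Finally, for any dense open $U \subseteq X$ equipped with its restricted log structure, the generic point $\eta_X$ lies in $\ul{U}$, so $F(\eta_X, \triv) = \colim_{\eta_X \in V \subseteq U} F(V)$ and the map $F(X) \to F(\eta_X,\triv)$ factors as $F(X) \to F(U) \to F(\eta_X,\triv)$. Injectivity of the composite forces injectivity of $F(X) \to F(U)$; the case when $\ul{X}$ is disconnected is handled componentwise. The main (and in fact the only) work has already been carried out in Corollary \ref{cor;purity}: what remains here is merely to interpret the sheaf $F$ as the zeroth homology of a $(\dNis,\bcube)$-fibrant complex so that the corollary can be invoked.
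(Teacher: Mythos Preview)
Your proof is correct and follows essentially the same approach as the paper: take a $\dNis$-fibrant replacement $G$ of $F[0]$, observe it is $(\dNis,\bcube)$-fibrant (hence in particular $(\sNis,\bcube)$-fibrant), and apply Corollary~\ref{cor;purity} to get $F(X)=a_{\dNis}H_0(G)(X)\hookrightarrow H_0(G(\eta_X,\triv))$, then factor through $F(U)$. You have simply made explicit the factorization through $F(U)$ and the identification $F(\eta_X,\triv)=H_0(G(\eta_X,\triv))$ that the paper leaves implicit.
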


\subsection{Comparison with Voevodsky's motives}

Let $\Cor(k)$ be Voevodsky's category of finite correspondences over $k$ \cite[\S 1]{MVW}. We have a pair of adjoint functors:\[
\begin{tikzcd}\lambda\colon \Cor(k)\ar[r,shift right]&\lCor(k)\colon\omega \ar[l,shift right]\end{tikzcd}
\]
where $\lambda(X)=(X,\triv)$ and $\omega(X,\partial X)=X-|\partial X|$. They induce functors on the  categories of complexes of presheaves
\begin{equation}\label{eq:adjunctionomega}
\begin{tikzcd}
\Cpx(\PSh^{\rm ltr}(k,\Lambda))\arrow[rr,shift left=1.5ex,"\omega_\sharp "]\arrow[rr,"\omega^*" description,leftarrow]\arrow[rr,shift right=1.5ex,"\omega_*"']&& \Cpx(\PSh^{\rm tr}(k,\Lambda))
\end{tikzcd}
\end{equation} 
where $\omega^*$ denotes as usual the restriction functor, $\omega_\sharp$ its left Kan extension and $\omega_*$ the right Kan extension. Since $\lambda$ is left adjoint to $\omega$, we have $\lambda^*=\omega_\sharp$. By construction, $\omega^*$ and $\omega_\sharp$ are $t$-exact for the global $t$-structures.

The adjunction $(\omega_\sharp, \omega^*)$ is a Quillen adjunction with respect to the $\dNis$-local model structure on the left hand side and the $\Nis$-local model structure on the right hand side, see \cite[4.3.4]{bpo}, and with respect to the $(\dNis,\bcube)$-local model structure on the left hand side and the $(\Nis,\A^1)$-local model structure on the right side, see \cite[4.3.5]{bpo} and induces therefore the following derived adjunctions:
		\begin{equation}\label{eq:adjunctionomega-derived1} \begin{tikzcd}
		 L\omega_\sharp\colon \D_{\dNis}(\Cpx(\PSh^{\rm ltr}(k,\Lambda)) )\arrow[r,shift left=.5ex ]\arrow[r, description,leftarrow, shift right=.5ex ] & \D_{\Nis}(\Cpx(\PSh^{\rm tr}(k,\Lambda))): R\omega^*.\\
		 L^{\bcube}\omega_\sharp\colon \lDM(k,\Lambda) \arrow[r,shift left=.5ex ]\arrow[r, description,leftarrow, shift right=.5ex ] & \mathbf{DM}^{\rm eff}_{\Nis}(k,\Lambda) ) : R^\bcube\omega^*.
		\end{tikzcd}
		\end{equation}
Similar adjunctions hold for the categories without transfers.

\begin{prop}\label{prop;omeganis}
Let $F\in \Cpx(\PSh^{\rm tr}(k,\Lambda))$ (resp. $G\in \Cpx(\PSh^{\rm ltr}(k,\Lambda)$). Then $R\omega^*(F)=(\omega^*F)_{\dNis}$ (resp. $L\omega_\sharp(G)=(\omega_\sharp G)_{\Nis}$), in particular $R\omega^*$ is $t$-exact. 
\end{prop}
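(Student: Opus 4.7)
The plan is to deduce both identifications and the $t$-exactness from a single sheafification-compatibility: $a_{\dNis}\,\omega^* \simeq \omega^*\,a_{\Nis}$ on presheaves with transfers (and the dual statement for $\omega_\sharp$). The starting point is to observe that $\omega\colon (\lCor(k), \dNis) \to (\Cor(k), \Nis)$ and $\lambda\colon (\Cor(k), \Nis) \to (\lCor(k), \dNis)$ are both continuous, i.e.\ they send covers to covers. For $\omega$: a strict Nisnevich distinguished square of fs log schemes restricts to a Nisnevich distinguished square on underlying open parts (the \'etale and residue-field properties are preserved), and a log modification $V' \to V$ restricts to the identity on $V - |\partial V|$, which is trivially a cover. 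For $\lambda$, a Nisnevich cover $\{U_i \to X\}$ in $\Sm(k)$ produces the strict Nisnevich cover $\{(U_i,\triv) \to (X,\triv)\}$. The standard topos-theoretic consequence is that $\omega^*\colon \PSh^{\rm tr} \to \PSh^{\rm ltr}$ takes $\Nis$-sheaves with transfers to $\dNis$-sheaves with log transfers, and that the right adjoint $\omega_* = \lambda^*$ (which equals restriction along $\lambda$ thanks to $\lambda \dashv \omega$) takes $\dNis$-sheaves with log transfers to $\Nis$-sheaves with transfers.

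The technical heart is the following lemma, proved by the adjoint-functor trick: if $K \in \PSh^{\rm tr}(k,\Lambda)$ satisfies $a_{\Nis}K = 0$, then $a_{\dNis}\omega^*K = 0$; and dually, if $K' \in \PSh^{\rm ltr}(k,\Lambda)$ satisfies $a_{\dNis}K' = 0$, then $a_{\Nis}\omega_\sharp K' = 0$. For the first assertion, $a_{\dNis}\omega^*K = 0$ is equivalent to $\Hom_{\PSh^{\rm ltr}}(\omega^*K, G) = 0$ for every $G \in \Shv_{\dNis}^{\rm ltr}(k,\Lambda)$; by the adjunction $\omega^* \dashv \omega_*$ this equals $\Hom_{\PSh^{\rm tr}}(K, \lambda^*G)$, which vanishes because $\lambda^*G$ is a $\Nis$-sheaf by the first step and $a_{\Nis}K = 0$. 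The dual assertion is proved identically, using $\omega_\sharp \dashv \omega^*$ together with the fact that $\omega^*F$ is a $\dNis$-sheaf for any $F \in \Shv_{\Nis}^{\rm tr}$. Since $\omega^*$ and $\omega_\sharp$ are both exact on presheaves (they are restrictions along functors of additive categories), these statements propagate to the conclusion that $\omega^*$ preserves $\Nis$-local equivalences of complexes of presheaves and $\omega_\sharp$ preserves $\dNis$-local equivalences.

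Given these preservations, the two formulas are immediate. For $F \to F_{\Nis}$ a $\Nis$-fibrant replacement of $F$, the map $\omega^*F \to \omega^*F_{\Nis}$ is a $\dNis$-local equivalence by the previous step, and $\omega^*F_{\Nis}$ is $\dNis$-fibrant because $\omega^*$ is a right Quillen functor by \cite[4.3.4]{bpo}; hence $\omega^*F_{\Nis}$ serves as a $\dNis$-fibrant replacement of $\omega^*F$, yielding $R\omega^*F \simeq (\omega^*F)_{\dNis}$. The identity $L\omega_\sharp(G) \simeq (\omega_\sharp G)_{\Nis}$ follows symmetrically from a $\dNis$-cofibrant replacement of $G$. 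For the final $t$-exactness claim, the commutation $a_{\dNis}\omega^* = \omega^* a_{\Nis}$ (a direct reformulation of the lemma) gives $H_i(R\omega^*F) = a_{\dNis}\omega^*H_iF = \omega^*(a_{\Nis}H_iF)$, which vanishes in precisely the same range of indices as the homology sheaves of $F$. The part requiring the most care will be the adjoint argument in the second paragraph, and in particular the identification $\omega_* = \lambda^*$, which is what allows one to bypass the analysis of a genuine right Kan extension and close the loop cleanly.
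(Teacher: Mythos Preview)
Your overall strategy is the same as the paper's: reduce everything to the sheafification compatibility $a_{\dNis}\omega^* \simeq \omega^* a_{\Nis}$ (and its dual for $\omega_\sharp$). The paper simply quotes this compatibility from \cite[(4.3.4)]{bpo} and unwinds the consequences; you attempt to prove it directly via an adjoint-functor trick. The dual half of your argument, for $\omega_\sharp$, is fine: using $\omega_\sharp \dashv \omega^*$ together with the (correct) fact that $\omega^*$ sends $\Nis$-sheaves to $\dNis$-sheaves, you get $a_{\dNis}K'=0 \Rightarrow a_{\Nis}\omega_\sharp K'=0$, whence $L\omega_\sharp(G)\simeq(\omega_\sharp G)_{\Nis}$.

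There is, however, a genuine error in the other half. The identification $\omega_* = \lambda^*$ is false. From $\lambda \dashv \omega$ one gets $\lambda^* = \omega_\sharp$, the \emph{left} adjoint of $\omega^*$; this is exactly what the paper records just above the proposition (``Since $\lambda$ is left adjoint to $\omega$, we have $\lambda^*=\omega_\sharp$''). The right adjoint $\omega_*$ is a genuine right Kan extension with no such simple description (indeed $\omega$ has no right adjoint). Your argument for ``$a_{\Nis}K=0 \Rightarrow a_{\dNis}\omega^*K=0$'' needs $\omega_*$ to carry $\dNis$-sheaves to $\Nis$-sheaves, but without $\omega_*=\lambda^*$ there is nothing to supply this --- and in fact that sheaf-preservation statement is \emph{equivalent} to the compatibility you are trying to prove, so the loop does not close. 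You flagged precisely this identification as the delicate point; it is not delicate but wrong.

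Two remarks on salvage. First, the $t$-exactness of $R\omega^*$ does not actually require the formula $R\omega^*F=(\omega^*F)_{\dNis}$: from $R\omega^*F=\omega^*(F_{\Nis})$ (right Quillen on a fibrant object) and exactness of $\omega^*$ one gets $H_n(R\omega^*F)=\omega^*(a_{\Nis}H_nF)$, which is already a $\dNis$-sheaf since $\omega^*$ preserves sheaves; full faithfulness of $\omega^*$ then gives $t$-exactness. Second, to recover the first formula you must establish $a_{\dNis}\omega^*=\omega^*a_{\Nis}$ by another route --- either cite \cite[(4.3.4)]{bpo} as the paper does, or show directly that $\omega$ is a morphism of sites (e.g.\ via the criterion that $\omega$ is continuous and preserves the terminal object and the fibre products that appear in the cd-squares).
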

\begin{proof}
	Since $\omega^*$ and $\omega_\sharp$ from \eqref{eq:adjunctionomega} are $t$-exact functors, we have that for every $X\in \lSm(k)$ (resp. $Y\in \Sm(k)$):
	\begin{align*}
	H_n (\omega^*F)_{\dNis}(X)&=\mathbf{H}^{-n}_{\dNis}(X,\omega^*F) & H_n (\omega_\sharp G)_{\Nis}(Y)&=\mathbf{H}^{-n}_{\Nis}(Y,\omega_\sharp G)\\ 
	& \cong \mathbf{H}^{-n}_{\Nis}(	\omega(X),F) && \cong \mathbf{H}^{-n}_{\dNis}(\lambda(Y),G)\\
	&\cong H_n (F_{\Nis}(\omega(X))) && \cong H_n (G_{\dNis}(\lambda(X)))\\& =\omega^*(H_n F_{\Nis})(X) &&=\omega_\sharp(H_n G_{\dNis})(Y) \\ &=H_n (\omega^* F_{\Nis})(X) &&=H_n (\omega_\sharp G_{\dNis})(Y)\\& =H_n(R\omega^*F)(X) &&=H_n(L\omega_\sharp G)(Y).
	\end{align*}
	Finally, by \cite[(4.3.4)]{bpo}:\[
	a_{\dNis}H_n(R\omega^*(F))=a_{\dNis}\omega^*H_n(F)=\omega^*a_{\Nis}H_n(F)
	\] 
	since $\omega^*$ is fully faithful, we conclude that $a_{\Nis}H_n(F)=0$ if and only if $a_{\dNis}H_n(R\omega^*(F))=0$, hence $\omega^*$ is $t$-exact for the local $t$-structure. 
\end{proof}

\begin{prop}\label{prop:comparisontstructure}The functor $R^{\bcube}\omega^*$ is $t$-exact with respect to Voevodsky's homotopy $t$-structure on $\DM$ and to the homotopy $t$-structure on $\lDM$ of Theorem \ref{tstructure}.
\end{prop}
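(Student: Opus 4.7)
The plan is to deduce the $t$-exactness of $R^\bcube\omega^*$ from the $t$-exactness of the unrestricted $R\omega^*$ (Proposition \ref{prop;omeganis}) by factoring it through the ambient derived categories of sheaves. Concretely, the strategy will be to show that the square
\[
\begin{tikzcd}
\DM(k,\Lambda) \ar[r, "R^\bcube\omega^*"] \ar[d, hook] & \lDM(k,\Lambda) \ar[d, hook] \\
\D_{\Nis}(\PSh^{\rm tr}(k,\Lambda)) \ar[r, "R\omega^*"'] & \D_{\dNis}(\PShltr(k,\Lambda))
\end{tikzcd}
\]
commutes up to natural isomorphism, where the vertical inclusions identify $\DM(k,\Lambda)$ and $\lDM(k,\Lambda)$ with the subcategories of $\A^1$-local and $\bcube$-local complexes respectively. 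By Theorem \ref{tstructure} and Voevodsky's corresponding classical result, both inclusions are $t$-exact with respect to the respective homotopy $t$-structures and the standard $t$-structures on the ambient derived categories of sheaves.

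The main step will be to verify this commutativity. Given $F\in \DM(k,\Lambda)$, I would choose a fibrant replacement $F\to F'$ in the $(\Nis,\A^1)$-local model structure on $\Cpx(\PSh^{\rm tr}(k,\Lambda))$, so that $F'$ represents the image of $F$ in $\D_\Nis(\PSh^{\rm tr}(k,\Lambda))$. Since the adjunction $(\omega_\sharp,\omega^*)$ is Quillen between the $(\dNis,\bcube)$-local and the $(\Nis,\A^1)$-local model structures by \cite[4.3.5]{bpo}, the object $\omega^*F'$ is already $(\dNis,\bcube)$-fibrant and hence represents $R^\bcube\omega^*F$ inside $\lDM(k,\Lambda)$. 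At the same time, $F'$ is in particular $\Nis$-fibrant, and the same adjunction is Quillen for the $\dNis$- vs $\Nis$-local structures by \cite[4.3.4]{bpo}, so $\omega^*F'$ is $\dNis$-fibrant and thus also represents $R\omega^*F$ inside $\D_\dNis(\PShltr(k,\Lambda))$. Both paths around the square therefore give $\omega^*F'$, proving commutativity.

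Granted this, the $t$-exactness of $R^\bcube\omega^*$ will follow formally: for any $F\in \DM(k,\Lambda)$ and any $n\in \Z$, the truncations $\tau_{\geq n}$ and $\tau_{\leq n}$ in $\lDM$ and $\DM$ coincide with those computed in the ambient derived categories through the $t$-exact inclusions, so the claim reduces directly to Proposition \ref{prop;omeganis}. I expect the only genuine subtlety to be the commutativity of the square; this is however essentially a bookkeeping exercise balancing $\tau$-fibrancy against $(\tau,\bcube)$-fibrancy and resting entirely on the two Quillen adjunction statements already established in \cite{bpo}, so no new technical input beyond what is in the paper will be needed.
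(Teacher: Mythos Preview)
Your proposal is correct and follows essentially the same approach as the paper: both establish the commutativity of the square relating $R^\bcube\omega^*$ to $R\omega^*$ through the $t$-exact inclusions, and then deduce $t$-exactness from Proposition~\ref{prop;omeganis}. The only cosmetic difference is that you invoke the Quillen adjunction of \cite[4.3.5]{bpo} directly to conclude that $\omega^*F'$ is $(\dNis,\bcube)$-fibrant, whereas the paper verifies $\bcube$-locality of $\omega^*K$ by the explicit computation $\omega(X\times\bcube)=\omega(X)\times\A^1$; these amount to the same thing.
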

\begin{proof}
If $K$ is $(\Nis, \A^1)$-fibrant, it is in particular $\Nis$-fibrant, hence by Proposition \ref{prop;omeganis}, $\omega^*K$ is $\dNis$-fibrant. Hence we have for every $n\in \Z$ and $X\in \SmlSm$,
\begin{equation}
    \mathbf{H}^{-n}_{\dNis}(X,\omega^*K)\cong\mathbf{H}^{-n}_{\Nis}(\omega(X),K)=\Hom(\Lambda_{\tr}(\omega(X))[n],K)
\end{equation}

In particular, since $\omega(X\times \bcube)=\omega(X)\times \A^1,$ we have that 
\begin{align*} \Hom((\Lambda_{\rm ltr}(X)\tensor \bcube)[n] , \omega^* K) &= \Hom(\Lambda_{\tr}(\omega(X)\tensor \A^1)[n], K) \\ &= \Hom(\Lambda_{\tr}(\omega(X))[n], K) \\ & = \Hom(\Lambda_{\rm ltr}(X)[n] , \omega^* K),\end{align*}
so $\omega^* K$ is $\bcube$-local if $K$ is $(\Nis, \A^1)$-local. It follows that $\omega^*$ sends $(\Nis,\A^1)$-weak equivalences to $(\dNis,\bcube)$-weak equivalences, so that the following diagram of triangulated categories commutes:
\begin{equation}\label{eq:diagramOmegacommutesinclusion} \begin{tikzcd}
\D_{\dNis}(\Cpx(\PSh^{\rm ltr}(k,\Lambda))) &  \D_{\Nis}(\Cpx(\PSh^{\rm tr}(k,\Lambda))) \arrow[l, "R\omega^*"] \\
\lDM(k,\Lambda) \arrow[u, hook, "\iota_{\mathbf{logDM}}"]&  \mathbf{DM}^{\rm eff}_{\Nis}(k,\Lambda) ) \arrow[u, hook,  "\iota_{\mathbf{DM}}"] \arrow[l, "R^{\bcube}\omega^*"]  
\end{tikzcd}
\end{equation}
where the vertical fully faithful functors are the right adjoint to the localizations $L^{\bcube}$ and $L^{\A^1}$ respectively. By \cite[Prop. 3.1.13]{V-TCM} and Theorem \ref{tstructure}, the $t$-structure on $\D_{\Nis}(\Cpx(\PSh^{\rm tr}(k,\Lambda)))$ (resp. on $\D_{\dNis}(\Cpx(\PSh^{\rm ltr}(k,\Lambda)))$) induces a $t$-structure on $\DM$ (resp. on $\lDM$), so that the inclusions $\iota_{\mathbf{DM}}$ and $\iota_{\mathbf{logDM}}$ are both $t$-exact.

To conclude, we need to show that $R^\bcube\omega^* \circ \tau^{\mathbf{DM}}_{\leq n} \cong \tau_{\leq n}^{\mathbf{logDM}} \circ R^{\bcube}\omega^*$. But since $R\omega^*$ is $t$-exact and \eqref{eq:diagramOmegacommutesinclusion} commutes,  we have
\begin{align*}R^{\bcube}\omega^*(\tau^{\mathbf{DM}}_{\leq n}K) &= R\omega^* \iota_{\mathbf{DM}}(\tau^{\mathbf{DM}}_{\leq n} K) = R\omega^* (\tau_{\leq n} \iota_{\mathbf{DM}}(K)) \\ &= \tau_{\leq n} R\omega^* \iota_{\mathbf{DM}}(K) = \tau_{\leq n}^{\mathbf{logDM}} R^{\bcube}\omega^*(K).\end{align*}
The same argument applies to the truncation $\tau_{\geq n}$, so that we can conclude.
\end{proof}

\begin{remark}Assume that $k$ satisfies resolution of singularities. Then the functor $R^\bcube\omega^*$ is fully faithful, and its essential image is identified with the subcategory of $\A^1$-local objects in $\lDM$ by \cite[Thm. 8.2.16]{bpo}. It follows from Proposition \ref{prop:comparisontstructure} that under $R^{\bcube}\omega^*$, the homotopy $t$-structure on $\DM$ is induced by the homotopy $t$-structure on $\lDM$. 
\end{remark}
\begin{cor}The functor $L^{\bcube}\omega_\sharp$ is right $t$-exact.
    \begin{proof}This follows immediately from the fact that its right adjoint is $t$-exact (in particular, left $t$-exact).
    \end{proof}
\end{cor}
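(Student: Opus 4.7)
The plan is to deduce this immediately from the adjunction together with Proposition \ref{prop:comparisontstructure}. Recall that from \eqref{eq:adjunctionomega-derived1} we have the derived adjunction
\[ L^{\bcube}\omega_\sharp \colon \lDM(k,\Lambda) \rightleftarrows \mathbf{DM}^{\rm eff}_{\Nis}(k,\Lambda) : R^{\bcube}\omega^*,\]
and Proposition \ref{prop:comparisontstructure} tells us that the right adjoint $R^{\bcube}\omega^*$ is $t$-exact with respect to the two homotopy $t$-structures. In particular, it is left $t$-exact, meaning that it sends $\lDM(k, \Lambda)^{\geq 0}$ into $\mathbf{DM}^{\rm eff}_{\Nis}(k,\Lambda)^{\geq 0}$ (in the cohomological convention; equivalently, using the homological convention of the paper, $R^{\bcube}\omega^*$ sends $\tau_{\leq -1}$-objects to $\tau_{\leq -1}$-objects).

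The key step is then to invoke the following standard fact: given an adjunction $F \dashv G$ between triangulated categories equipped with $t$-structures, $F$ is right $t$-exact if and only if $G$ is left $t$-exact. Indeed, for $X \in \lDM(k,\Lambda)_{\geq 0}$ (i.e.\ $X$ in the connective part) and $Y$ in the co-connective part of $\mathbf{DM}^{\rm eff}_{\Nis}(k,\Lambda)$, one has
\[\Hom(L^{\bcube}\omega_\sharp X, Y) = \Hom(X, R^{\bcube}\omega^* Y) = 0,\]
the last vanishing following because $R^{\bcube}\omega^* Y$ still lies in the co-connective part by left $t$-exactness of $R^{\bcube}\omega^*$. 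This shows that $L^{\bcube}\omega_\sharp X$ is connective, so $L^{\bcube}\omega_\sharp$ is right $t$-exact.

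There is no real obstacle here, since every ingredient has already been established: the adjunction in \eqref{eq:adjunctionomega-derived1} and the $t$-exactness of $R^{\bcube}\omega^*$ in Proposition \ref{prop:comparisontstructure}. The only thing to verify is the purely formal adjunction argument outlined above, which is a classical statement on adjoint pairs between triangulated categories with $t$-structures (see e.g.\ the analogous statement in \cite[Prop. 1.3.17]{BBD}).
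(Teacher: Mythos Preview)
Your proof is correct and follows exactly the same approach as the paper: invoke the adjunction $L^{\bcube}\omega_\sharp \dashv R^{\bcube}\omega^*$ and use that the right adjoint is (left) $t$-exact by Proposition~\ref{prop:comparisontstructure}. One small slip: $R^{\bcube}\omega^*$ goes from $\mathbf{DM}^{\rm eff}_{\Nis}(k,\Lambda)$ to $\lDM(k,\Lambda)$, not the other way, so your sentence about it sending $\lDM^{\geq 0}$ into $\mathbf{DM}^{\geq 0}$ has the categories swapped --- but the actual computation you give afterwards uses the correct direction and is fine.
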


\section{Application to reciprocity sheaves}\label{sec:ApplRSC}
In this section, we discuss some applications to the theory of reciprocity sheaves. As above, for $X\in \SmlSm(k)$, let $|\partial X|$ be the strict normal crossing divisor supporting the log structure of $X$. We will call the modulus pair $(\underline{X},|\partial X|_{\red})$ the \emph{associated reduced modulus pair}. We remark that the assignment $X\mapsto (\underline{X},|\partial X|_{\red})$ \emph{does not} give rise to a functor from $\SmlSm(k)$ to $\MCor$, since a priori there is no control on the multiplicities of the divisor $\partial X$ in the pullback along a morphism in $\SmlSm(k)$.      

However, thanks to \cite{shuji} there exists a functor\[
\Log:\RSC_{\Nis}(k)\to \Shv_{\dNis}^{\rm ltr}(k,\Z)
\] 
where for $X=(\underline{X},\partial X)\in \SmlSm(k)$ we have\[
\Log(F)(X):= F^{\log}(X) = \omega^{\CI}F (\ul{X},|\partial X|_{red}).
\]
Here, $\omega^{\bf CI}\colon \RSC_{\rm Nis}\to \CI_{\Nis}$ is the functor defined in \cite[Prop. 2.3.7]{KSY2} (see also \cite[Thm. 2.4.1-2.4.2]{KSY2} and \ref{para:riassuntoomegaCIetc} below), and $\CI_{\Nis}$ is the subcategory of $\bcube$-invariant Nisnevich sheaves on $\MCor$, defined in \cite{KMSY1} (not to be confused with $\CI_{\dNis}^{\rm ltr}$ introduced in the present paper). By \cite[Thm. 0.2]{shuji}, $\Log$ is fully faithful and exact.

\begin{prop}
The essential image of $\Log$ is a subcategory of $\CIltr_{\dNis}$.
\begin{proof}
	By \cite[Theorem 4.1]{shuji} we have that for $F\in \RSC_{\Nis}$ then $\Log(F)$ is strictly $\bcube$-invariant.
	\end{proof}
\end{prop}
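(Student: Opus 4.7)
The plan is quite short, since the essential content is already available in the literature. I would proceed as follows.

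First, recall that for $F \in \CIltr_{\dNis}$ we need two properties: that $F$ is a dividing Nisnevich sheaf with log transfers, and that its higher cohomology presheaves are $\bcube$-invariant (strict $\bcube$-invariance). The first of these comes for free from Saito's construction: by \cite[Thm.~0.2]{shuji}, the functor $\Log$ lands in $\Shv_{\dNis}^{\rm ltr}(k,\Z)$, so the sheaf-with-log-transfers aspect is automatic.

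The only thing left to verify is that, given $F \in \RSC_{\Nis}$, the cohomology presheaves $X \mapsto \mathbf{H}^i_{\dNis}(X, \Log(F))$ are $\bcube$-invariant for every $i \geq 0$. The case $i = 0$ says that $\Log(F)$ itself is $\bcube$-invariant, which follows from the $\bcube$-invariance on the modulus side together with the definition $\Log(F)(X) = \omega^{\CI}F(\ul{X}, |\partial X|_{\red})$ and the compatibility of the reduced modulus construction with products by $\bcube$. For $i > 0$ one needs the nontrivial input, namely that cohomology itself remains $\bcube$-invariant; this is precisely the content of \cite[Thm.~4.1]{shuji}, which asserts that the higher Nisnevich cohomology of $\Log(F)$ (computed after log modifications, hence equivalently the $\dNis$ cohomology by Thm.~\ref{thm:cohomologycdNiscolimitS}) is $\bcube$-invariant.

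The main (and only) obstacle is therefore purely bibliographic: the result hinges on Saito's theorem, whose proof uses the purity machinery of \cite{S-purity} and would be far outside the scope of a short argument here. Consequently the proof is essentially a one-line invocation of \cite[Thm.~4.1]{shuji}, combined with the observation that the $\dNis$-cohomology and $\sNis$-cohomology after refinement by log modifications agree on $\SmlSm(k)$ by Thm.~\ref{thm:cohomologycdNiscolimitS}, so that the $\bcube$-invariance proved in \emph{loc.\ cit.} implies strict $\bcube$-invariance in the sense of our Definition.
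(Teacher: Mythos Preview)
Your proposal is correct and follows the same approach as the paper: both arguments reduce immediately to \cite[Theorem~4.1]{shuji}, which asserts that $\Log(F)$ is strictly $\bcube$-invariant for $F\in\RSC_{\Nis}$. Your write-up simply unpacks what strict $\bcube$-invariance means and why the cited theorem delivers it, whereas the paper records the citation in a single sentence.
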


One can wonder if the two categories agree, i.e. if $\Log$ is essentially surjective onto $\CIltr_{\dNis}$. This is not the case, as the following example indicates. 
\begin{example}{See \cite[Proposition 3.5]{P1loc}}
Let $\G_a\in \RSC_{\Nis}$, then\[
\Log(\G_a)(X)=\Gamma(\underline{X},\mathcal{O}_{\underline{X}})
\]
By e.g. \cite[Corolary 9.2.6]{bpo}, we have that $H^n_{\dNis}(X,\Log(\G_a))=H^n_{sZar}(X,\Log(\G_a))$. Let $\Log(\G_a)\to I^{\bullet}$ be an injective resolution of $\dNis$-sheaves. Thus, for all $U\subseteq X$ open affine, then there is a quasi-isomorphism\[
\Log(\G_a)(U)\to I^{\bullet}(U)
\]
It follows that for every set $A$, the map\[
\prod_{A}\Log(\G_a)(U)\to \prod_{A}I^{\bullet}(U)
\]
is a quasi-isomorphism. Thus $\prod_{A}\Log(\G_a)\to \prod_{A}I^{\bullet}$ is a $\sZar$-local equivalence, hence a $\sNis$-local equivalence, so $\prod_{A}I^{\bullet}$ is an injective resolution of $\prod_{A}\Log(\G_a)$.

We conclude that
\begin{multline*}
H^n_{\dNis}(X,\prod_A\Log(\G_a))=H^n(\prod_{A}I^{\bullet}(X))=\prod_AH^nI^\bullet(X)=\prod_A H^n_{\dNis}(X,\Log(\G_a))
\end{multline*}
In particular, $\prod_A\Log(\G_a)$ is strictly $\bcube$ invariant.
On the other hand, by \cite[Remark 6.1.2]{KSY1}, if $A$ is infinite $\prod_A\G_a$ does not belong to $\RSC_{\Nis}$.
\end{example}

\begin{para}\label{para:riassuntoomegaCIetc}We recall some further constructions from the theory of modulus (pre)sheaves with transfers. For $F\in \MPST$, write $h_0^{\bcube}(F)$ for the presheaf
    \[\sX\mapsto \Coker(F(\sX\otimes \bcube ) \xrightarrow{i_0^*-i_1^*}F(\sX)),\]
    where $i_0^*$ and $i_1^*$ are as usual the pullbacks along the zero section and the unit section of $\bcube$ respectively. Clearly, $h_0^{\bcube}(F)$ is $\bcube$-invariant in $\MPST$, i.e.\ $h_0^{\bcube}(F)\in \CI$. By \cite[Prop. 2.1.5]{KSY2},  $h_0^{\bcube}(-)$ is the left adjoint to the inclusion $\iota^{\bcube}\colon \CI\to \MPST$. Note that $\iota^{\bcube}$ has a right adjoint as well by \cite[2.1.7]{KSY2}, denoted $h^0_{\bcube}(-)$.
    
    Let $\omega_!\colon \MPST\to \PSh^{\tr}(k)$ be the left Kan extension of $\omega\colon \MCor\to \Cor(k)$, sending $\sX = (X, X_\infty) \mapsto X-|X_\infty|$.\footnote{we follow the notation in \cite{KMSY1}, to avoid confusion with $\omega_\sharp$ used before, but note that the functor $\omega$ in \emph{loc.cit.} and the functor $\omega$ used in this paper are very similar, even though they are defined on different categories.} We write $\omega^{\CI} \colon \RSC\to \CI$ for the composition $h^0_{\bcube}\circ \omega^*\circ \iota_{\RSC}$, where $\iota_{\RSC}$ is the inclusion of $\RSC$ in $\PSh^{\tr}(k)$ and $\omega^*\colon \PSh^{\tr}(k)\to\MPST$ is the restriction. If no confusion arises, we will use the same symbols to denote the corresponding functor on the subcategories of Nisnevich sheaves, $\omega^{\CI}\colon \RSC_{\Nis}\to \CI_{\Nis}$ and $\omega^*\colon \Shv_{\Nis}^{\tr}(k)\to \MNST$.
    By \cite[Prop. 2.3.7]{KSY2}, $\omega_! \omega^{\CI} F=F$ for every $F\in \RSC_{\Nis}$.

Using the above-defined functors, we can compute the sections of $\Log(F)$ on $X\in \SmlSm(k)$ for $F\in \RSC_{\Nis}$ as follows. Write $X=(\ul{X}, \partial X)$ and $X^o=\ul{X}-|\partial X|$. Choose a normal compactification $j\colon \ul{X}\hookrightarrow Y$ with the property that $X^o\to \ul{X}\to Y$ is open dense, and such that the complement $Y-X^o = D+ \partial X_Y$ for some effective Cartier divisors $D$ and $\partial X_Y$ on $Y$ satisfying $Y-|D| = j(\ul{X})$ and $\partial X_Y\cap \ul{X} = \partial X$ as reduced Cartier divisors. Such a compactification is called a Cartier compactification of $\ul{X}$, and it always exists (cfr. \cite[Def.1.7.3]{KMSY1}). Then we have
\begin{equation}
    \Log(F)(X) = (\omegaCI F)^{\rm log}(X) = \varcolim_n \Hom_{\MNST}(h_0^{\bcube}(\ul{X}, nD + \partial X_Y), \omega^* F)
\end{equation}
where $\omega^*F\in \MNST$ if $F\in \Shv^{\tr}_{\Nis}$. This follows from \cite[Lem. 1.7.4(b)]{KMSY1} and the definition of $\omega^{\CI}$.
\end{para} 

\begin{prop}
\label{colimitsRSC}
$\RSC_{\Nis}$ is closed under colimits in $\Shv^{\tr}_{\Nis}(k)$.
\end{prop}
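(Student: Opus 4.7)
The approach is to decompose an arbitrary small colimit in $\NST$ into three building blocks---cokernels, finite direct sums, and filtered colimits---and to verify that $\RSC_{\Nis}$ is stable under each. Any colimit in a cocomplete abelian category is a coequalizer of a coproduct, coequalizers are cokernels, and any coproduct is a filtered colimit of its finite sub-coproducts; hence closure under these three operations will suffice.

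For cokernels the argument is essentially formal: if $F \to G$ is a morphism in $\RSC_{\Nis}$, then the cokernel in $\PSh^{\tr}(k)$ lies in $\RSC$ by the closure of $\RSC$ under quotients recalled above. The cokernel in $\NST$ is obtained by Nisnevich sheafification and remains in $\RSC_{\Nis}$ by S.~Saito's theorem \cite[Thm.~0.1]{S-purity} that $\RSC_{\Nis}$ is abelian (equivalently, that reciprocity is preserved by $a_{\Nis}$).

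For filtered colimits, let $\{F_i\}_{i\in I}$ be a filtered system in $\RSC_{\Nis}$ with colimit $F$ in $\NST$. Since the Nisnevich site on $\Sm(k)$ is Noetherian, filtered colimits of sheaves are computed sectionwise, so $F(X) = \colim_i F_i(X)$ for every $X \in \Sm(k)$. Given $a \in F(X)$, I would lift it to some $a_j \in F_j(X)$, invoke reciprocity of $F_j$ to produce $\sX \in \mathbf{MSm}(X)$ through which $\widetilde{a_j}\colon \Ztr(X) \to F_j$ factors via $h_0(\sX)$, and then post-compose with the structure map $F_j \to F$ to get the factorization of $\widetilde{a}$, giving $F \in \RSC$.

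For finite direct sums, $F_1 \oplus F_2$ is computed sectionwise. A section $(a_1, a_2) \in (F_1\oplus F_2)(X)$ gives, by reciprocity of each $F_r$, two modulus pairs $\sX_r \in \mathbf{MSm}(X)$, and the key point is to find a single $\sX \in \mathbf{MSm}(X)$ with maps $\sX \to \sX_r$ in $\ulMCor$: this is the cofilteredness of $\mathbf{MSm}(X)$ (any two modulus pairs over $X$ are dominated by a common refinement, obtained by resolving the rational map between their compactifications and summing the pullbacks of the divisors, cf.\ \cite{KMSY1}). The induced maps $h_0(\sX) \to h_0(\sX_r)$ then exhibit the required factorization of the diagonal $\Ztr(X) \to F_1 \oplus F_2$ through $h_0(\sX)$. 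The genuinely non-formal input is Saito's abelianness theorem used in the cokernel step; the other two cases amount to bookkeeping with the reciprocity condition together with the standard cofilteredness of $\mathbf{MSm}(X)$.
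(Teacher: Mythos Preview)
Your proof is correct and takes a different, more elementary route than the paper's. The paper argues uniformly for all colimits at once via the modulus machinery: after reducing (as you also do) to closure of $\RSC$ under presheaf colimits using Saito's theorem that $a_{\Nis}$ preserves reciprocity, it writes $\varcolim^{\PSh^{\tr}} F_i = \omega_!\bigl(\varcolim^{\MPST}\omegaCI F_i\bigr)$ using $\omega_!\omegaCI=\id$ and cocontinuity of $\omega_!$, and then observes that $\CI$ is closed under colimits in $\MPST$ because the inclusion $\CI\hookrightarrow\MPST$ has a \emph{right} adjoint $h^0_{\bcube}$; since $\omega_!(\CI)\subseteq\RSC$, the conclusion follows. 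The structural reason---coreflectivity of $\CI$ in $\MPST$---is what drives the paper's argument and is invisible in your case-by-case verification. Conversely, your approach stays entirely in $\PSh^{\tr}(k)$ and $\NST$, never passing through $\MPST$: it needs only the section-wise definition of reciprocity, the cofilteredness of $\mathbf{MSm}(X)$, and the closure of $\RSC$ under quotients, so it is closer to first principles and does not rely on the identification $\omega_!(\CI)\subseteq\RSC$. Both proofs rest on the same non-formal input, namely Saito's sheafification theorem.
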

\begin{proof} Recall that if $\{F_i\}_{i\in I}$ is a diagram in $\Shv^{\tr}_{\Nis}(k)$, then
    \begin{equation}\label{eq:colimipshshv}\varcolim_{i\in I} F_i =a_{\Nis}^V\varcolim_{i\in I}\iota_{\Shv^{\tr}_{\Nis}}(F_i),\end{equation}
        where $\iota_{\Shv^{\tr}_{\Nis}}\colon \Shv^{\tr}_{\Nis}(k) \to \PSh^{\tr}(k)$ is the inclusion, the colimit on the left-hand side of \eqref{eq:colimipshshv} is computed in $\Shv^{\tr}_{\Nis}$ and the colimit on the right-hand side is computed in $\PSh^{\tr}(k)$.      Since  $a_{\Nis}^V$ respects reciprocity by \cite[Theorem 0.1]{shuji}, it is enough to prove that $\RSC$ is closed under colimits in $\PSh^{\tr}(k)$. Consider then a diagram $\{F_i\}_{i\in I}$ in $\RSC$. Since  $\omega_!$ is a left adjoint and thus it preserves all colimits, we have 
\[
\varcolim_{i\in I}^{\PSh^{\tr}_{\Nis}(k)} F_i= \omega_!\varcolim_{i\in I}^{\MPST} \omega^{\CI}F_i.
\]
Since  $\CI$ is closed under colimits, and  $h_0^{\bcube}$ and $i^{\bcube}$ are left adjoints, we conclude  $i^{\bcube}h_0^\bcube\varcolim^{\MPST}F_i=\varcolim^{\MPST}i^{\bcube}h_0^\bcube F_i$, so that the colimit is in $\RSC$, as required. 
\end{proof}

\begin{remark}\label{rmk;productofmodulus}
	 For $X,Y\in\SmlSm(k)$, we have by e.g. \cite[III.2]{ogu}\[
	X\times Y = (\underline{X}\times \underline{Y},(pr_X^*\mathcal{M}_X\oplus pr_Y^*\mathcal{M}_Y)^{\textrm{fs}})
	\] 
	The divisor that supports the sheaf of monoids $pr_X^*\mathcal{M}_X\oplus pr_Y^*\mathcal{M}_Y$ is $D_X\times Y + X\times D_Y$, where the divisors $D_X$ and $D_Y$ support $\mathcal{M}_X$ and $\mathcal{M}_Y$ respectively, and the functor $(-)^{\rm fs}$ does not change the support. We conclude that the associated reduced modulus pair of $X\times Y$ is $\sX\otimes \sY$.
\end{remark}

\begin{lemma}\label{lem:defRSC}
$\Log$ has a pro-left adjoint $\Rsc$, given by the formula
 \[\Rsc(G):=\varcolim_{X\downarrow G}^{\pro\RSC_{\Nis}}\varplim{n}\omega_!h_0^\bcube(\overline{X},D+nD'),F),\]
 for every $G \in  \Shltr(k,\Z)$.
\begin{proof}
It follows directly from Saito's theorem \cite[Thm. 6.3]{shuji} that $\Log$ preserves finite limits, so the existence of a pro-left adjoint is formal (see e.g. \cite[I.8.11.4]{SGA4I}). In the rest of the proof we characterize the pro-adjoint explicitly: such description will be used later in the computation. Let $F\in \RSC_{\Nis}$ and $G\in \Shltr(k,\Z)$. For any $X\in \SmlSm(k)$,  let $(\underline{X},D)$ be the associated reduced modulus pair and choose $\overline{X}$ a Cartier compactification of $\underline{X}$. Set $D':=X'\setminus X$. 

Recall that\[
\Log(F)(X)=\omegaCI(F)(X,D)=\colim_n\Hom(\omega_!h_0^\bcube(\overline{X},D+nD'),F).
\]
Writing $G$ as colimit of representable sheaves, $G=\varcolim_{s:X\to G} a_{\dNis}\Z_{\mathrm{ltr}}(X)$, we have:
\begin{align*}
    \Hom_{\Shltr}(G,\Log(F))& = \Hom_{\Shltr}(\varcolim_{X\downarrow G}  a_{\dNis}\Z_{\mathrm{ltr}}(X),\Log(F))\\
     & = \varlim_{X\downarrow G} \Hom_{\Shltr}(  a_{\dNis}\Z_{\mathrm{ltr}}(X),\Log(F))\\
     & =\varlim_{X\downarrow G}\colim_n \Hom_{\RSC}(\omega_!h_0^\bcube(\overline{X},D+nD'),F)\\
     & =\varlim_{X\downarrow G}\Hom_{\pro\RSC}(\varplim{n}\omega_!h_0^\bcube(\overline{X},D+nD'),F).
\end{align*}
where the last equality simply follows from the definition of the morphisms in the pro-category $\pro\RSC$.
By Proposition \ref{colimitsRSC}, $\RSC_{\Nis}$ is cocomplete, hence $\pro\RSC_{\Nis}$ is cocomplete by e.g.\ \cite[Prop. 11.1]{isaksen} and we can pass the limit inside the Hom to get: \[
\Hom_{\Shltr}(G,\Log(F))=\Hom_{\pro\RSC}(\varcolim_{X\downarrow G}^{\pro\RSC_{\Nis}}\varplim{n}\omega_!h_0^\bcube(\overline{X},D+nD'),F).
\]
Thus, we can identify the pro-left adjoint to $\Log$ with the functor \[\Rsc(G):=\varcolim_{X\downarrow G}^{\pro\RSC_{\Nis}}\varplim{n}\omega_!h_0^\bcube(\overline{X},D+nD'),\]
from $ \Shltr(k,\Z)$ to $\RSC_{\Nis}$.
\end{proof}
\end{lemma}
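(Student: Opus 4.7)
The plan is to proceed in two stages: first establish formal existence of a pro-left adjoint via a recognition criterion, then compute it explicitly by testing on representable sheaves and using the known description of $\Log$.

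For the formal existence, I would invoke the standard fact (\cite[I.8.11.4]{SGA4I}) that a functor between suitable categories admits a pro-left adjoint as soon as it commutes with finite limits. Since $\Log$ is exact (and in particular left exact) by \cite[Thm.~0.2]{shuji}, a pro-left adjoint $\Rsc\colon \Shltr(k,\Z)\to \pro\RSC_{\Nis}$ exists abstractly; what remains is to identify it.

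For the explicit description, I would write any $G\in\Shltr(k,\Z)$ as the tautological colimit $G=\varcolim_{X\downarrow G} a_{\dNis}\Z_{\mathrm{ltr}}(X)$ over the comma category of representables mapping to $G$. Since $\Rsc$ is a pro-left adjoint, it sends colimits in $\Shltr$ to colimits in $\pro\RSC_{\Nis}$, so it suffices to compute $\Rsc$ on a representable $a_{\dNis}\Z_{\mathrm{ltr}}(X)$. For such an $X\in\SmlSm(k)$ with associated reduced modulus pair $(\ul X,D)$ and Cartier compactification $\ol X$ with complement $D'$, one has by definition of $\Log$ and the recollections in \ref{para:riassuntoomegaCIetc}
\[\Hom_{\Shltr}(a_{\dNis}\Z_{\mathrm{ltr}}(X),\Log(F))=\Log(F)(X)=\omegaCI(F)(\ul X,D)=\varcolim_n\Hom_{\RSC}(\omega_!h_0^{\bcube}(\ol X,D+nD'),F).\]
The right-hand colimit is then, by the definition of morphism-sets in the pro-category, equal to $\Hom_{\pro\RSC}(\varplim{n}\omega_!h_0^{\bcube}(\ol X,D+nD'),F)$, which identifies $\Rsc$ on representables with the pro-object $\varplim{n}\omega_!h_0^{\bcube}(\ol X,D+nD')$.

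To globalize, I would combine the two steps: applying $\Hom_{\Shltr}(-,\Log(F))$ to the colimit presentation of $G$ turns the colimit into a limit, and each term equals $\Hom_{\pro\RSC}$ of the pro-object above. To pull this limit back inside the $\Hom_{\pro\RSC}$ as a colimit in $\pro\RSC_{\Nis}$, I would need cocompleteness of $\pro\RSC_{\Nis}$; this follows from Proposition \ref{colimitsRSC} (which shows $\RSC_{\Nis}$ is cocomplete) together with the standard fact \cite[Prop.~11.1]{isaksen} that pro-categories of cocomplete categories are cocomplete. This yields the claimed formula.

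The main obstacle I anticipate is the cocompleteness input for $\pro\RSC_{\Nis}$; without Proposition \ref{colimitsRSC} the formal identification of $\Rsc$ as a genuine functor into $\pro\RSC_{\Nis}$ (rather than a mere pro-object valued assignment defined only on representables) would be problematic. A secondary subtle point is checking that the assignment $X\mapsto \varplim{n}\omega_!h_0^{\bcube}(\ol X,D+nD')$ is sufficiently functorial on the comma category $X\downarrow G$; since morphisms in $\SmlSm(k)$ need not respect the multiplicities of the log divisors, one must work in the pro-category from the start to absorb the ambiguity in the choice of $n$ and of Cartier compactification, which is precisely what the formula does.
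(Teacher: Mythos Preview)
Your proposal is correct and follows essentially the same approach as the paper: formal existence via left-exactness of $\Log$ and \cite[I.8.11.4]{SGA4I}, then explicit identification by writing $G$ as a colimit of representables, using the formula for $\Log(F)(X)$ in terms of $\omega_!h_0^{\bcube}(\overline{X},D+nD')$, and invoking cocompleteness of $\pro\RSC_{\Nis}$ via Proposition~\ref{colimitsRSC} and \cite[Prop.~11.1]{isaksen} to pass the limit inside the Hom. The only cosmetic difference is that the paper cites \cite[Thm.~6.3]{shuji} rather than \cite[Thm.~0.2]{shuji} for the finite-limit preservation, but the content is the same.
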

\begin{para}The category of reciprocity sheaves is equipped with a \emph{lax} monoidal structure constructed in \cite{RSY}, given by
    \begin{equation}\label{eq:deftensorRSC}
\bigl(F, G\bigr)_{\RSC_{\Nis}}:=\ulomega_!(\omegaCI F\otCINis \omegaCI G),
\end{equation}
for $F, G\in \RSC_{\Nis}$. More generally, there are functors for $n\geq 1$
\[\RSC_{\Nis}^{\times n} \to \RSC_{\Nis}, (F_1, \ldots, F_n)\mapsto \bigl(F_1, F_2, \ldots, F_n)_{\RSC_{\Nis}}, \]
which satisfies only a weak form of associativity, see \cite[Cor. 4.18-4.21]{RSY}. See \cite{RSY} and \cite{ms} for some computations. In particular, a nontrivial argument (see \cite[Theorem 5.2]{RSY}) shows that that:\[
(F,G)_{\RSC_{\Nis}} = F\otimes_{\HI_{\Nis}} G
\]
whenever $F,G\in \HI_{\Nis}$ and $\ch(k)=0$. We can extend the bifunctor $(-,-)_{\RSC_{\Nis}}$ to the pro-category as follows. 
\end{para}
\begin{definition}\label{def:tensorproRSC}
Let $F=\plim F_i,G=\plim G_j\in \pro\RSC_{\Nis}$, then we define\[
(F,G)_{\RSC}^{pro}:=\varplim{i,j} (F_i,G_j)_{\RSC_{\Nis}}.
\]\end{definition}
\begin{prop}
$(\_,\_)_{\RSC}^{pro}$ is well defined and bifunctorial.

\begin{proof}
We first show that the assignment is well defined, i.e. that it doesn't depend on the chosen representation of $\plim F$ as object in $\pro\RSC_\Nis$. Thus, let $ \plim F\cong \plim F'$ be another representation of the pro-system $F$. For every $\plim G,\plim H \in \pro\RSC_{\Nis}$,  we have canonical identifications
\begin{align}\label{eq:well-defRSCpro-monoidal1}
\Hom_{\pro\RSC_{\Nis}}&((\plim F,\plim G)_{\RSC}^{pro},\plim H)\\\nonumber
 &=^{(1)}\lim_{H} \colim_{F,G}\Hom_{\RSC_{\Nis}}((F,G)_{\RSC}, H)\\\nonumber
&=^{(2)}\lim_{H} \colim_{F,G}\Hom_{\RSC_{\Nis}}(\omega_!(\omegaCI F\otimes_{\CI}^{\Nis} \omegaCI G), \omega_!\omegaCI H)\\\nonumber
&=^{(3)}\lim_{H} \colim_{F,G}\Hom_{\CIt_{\Nis}}(\omegaCI F\otimes_{\CI}^{\Nis} \omegaCI G, \omegaCI H),\\\nonumber
&=^{(4)}\lim_{H} \colim_{F,G}\Hom_{\CIt_{\Nis}}(\omegaCI F, \uHom(\omegaCI G, \omegaCI H)),\nonumber
\end{align}
    where (1) is given by the definition of the morphisms in the pro category, (2) is simply the definition of the monoidal structure, (3) follow from the fact that $\omega_!$ restricts to a functor $\CI_{\Nis} \to \RSC_{\Nis}$ which is left adjoint to the fully faithful functor $\omegaCI$, and finally (4) is the adjunction for the internal Hom structure in $\CI_{\Nis}$. 
The functor $\omegaCI$ preserves all limits being a right adjoint, hence it induces a functor on the pro categories defined level-wise\[
\pro\omegaCI\colon \pro\RSC_{\Nis}\to \pro \CIt_{\Nis}\quad \pro\omegaCI(\plim F) := \plim \omegaCI F
\]
Hence, since $\plim F\cong \plim F'$, we have that \[
 \pro\omegaCI(\plim F)\cong  \pro\omegaCI(\plim F'),
\]
in particular, for fixed $G$ and $H$ in $\RSC_{\Nis}$, we have isomorphisms  
\begin{align}\label{eq:well-defRSCpro-monoidal2}
\colim_F&\Hom_{\CIt_{\Nis}}(\omegaCI F, \uHom(\omegaCI G, \omegaCI H)) \\ \nonumber & = \Hom_{\pro\CIt_{\Nis}}(\pro\omegaCI \plim F, \uHom(\omegaCI G, \omegaCI H))\\\nonumber
& \simeq \Hom_{\pro\CIt_{\Nis}}(\pro\omegaCI \plim F', \uHom(\omegaCI G, \omegaCI H)) \\\nonumber
& \simeq \colim_{F'}\Hom_{\CIt_{\Nis}}(\omegaCI F', \uHom(\omegaCI G, \omegaCI H)).\nonumber
\end{align}
Combining \eqref{eq:well-defRSCpro-monoidal1} and \eqref{eq:well-defRSCpro-monoidal2} we have that
\begin{align*}
\lim_{H} \colim_{F,G}&\Hom_{\CIt_{\Nis}}(\omegaCI F,\uHom(\omegaCI G,\omegaCI H))\\
&=\lim_{H} \colim_{F',G}\Hom_{\CIt_\Nis}(\omegaCI F',\uHom(\omegaCI G,\omegaCI H))\\
&=\lim_{H} \colim_{F',G}\Hom_{\RSC_{\Nis}}((F',G)_{\RSC}, H)\\
 & = \Hom_{\pro \RSC_{\Nis}}((\plim F',\plim G)_{\RSC}^{pro}, \plim H)
\end{align*}
This shows that $(\_,\_)_{\RSC}^{pro}$ is indeed well defined.

We now prove the functoriality statement. Let $f\colon \plim{F}\to \plim{G}$ be a morphism in $\pro\RSC_{\Nis}$. We can use  e.g.\ \cite[Appendix 3.2]{am} to reindex the limit by choosing isomorphisms $a\colon \plim{F}\xrightarrow{\simeq} \plim_{\alpha} F_{\alpha}$ and $b\colon \plim{G}\xrightarrow{\simeq} \plim_{\alpha}  G_{\alpha}$ and level-wise defined morphisms $f_{\alpha}:F_{\alpha}\to G_\alpha$ in $\RSC_{\Nis}$ such that $f=b^{-1}\plim f_{\alpha} a$. 
Let $H=\plim H_{\beta}$ be another pro-reciprocity sheaf. Then for all $\alpha, \beta$ we have a map \[(f_\alpha,id)_{\RSC_{\Nis}} \colon (F_{\alpha } H_{\beta})_{\RSC_{\Nis}}\to (G_{\alpha},H_{\beta})_{\RSC_{\Nis}}\]
The previous computations  show that both $a$ and $b$ induce isomorphisms \begin{align*}
(a,id)& \colon (F,H)_{\RSC}^{pro} \to (\plim F_\alpha,H)_{\RSC}^{pro}\\ (b,id)& \colon  (G,H)_{\RSC}^{pro} \to (\plim G_\beta,H)_{\RSC}^{pro},
\end{align*}
which then induce a morphism:
\[
(F,H)_{\RSC}^{pro} \xrightarrow[\simeq]{(a,id)} (\plim F_{\alpha},H)_{\RSC}^{pro} \xrightarrow{\plim (f_\alpha,id) }(G_{\alpha},H)_{\RSC}^{pro}\xrightarrow[\simeq]{(b,id)^{-1}} (G,H)_{\RSC}^{pro}.
\]
It is clear that this morphism  depends only on $f$, since if $a':F\xrightarrow{\sim} \plim F_{\beta}$ and $b':G\xrightarrow{\sim} \plim G_{\beta}$, then the diagram below commutes:\[
\begin{tikzcd}
(\plim F_{\alpha},H)_{\RSC}^{pro}\ar[rr,"{\plim (f_\alpha,id)}"]&&(\plim G_{\alpha},H)_{\RSC}^{pro}\ar[d,"\simeq"]\\
(F,H)_{\RSC}^{pro}\ar[u,"\simeq"]\ar[d,swap,"\simeq"]&&(G,H)_{\RSC}^{pro}\\
(\plim F_{\beta},H)_{\RSC}^{pro}\ar[rr,"{\plim (f_\beta,id)}"]&&(\plim G_{\beta},H)_{\RSC}^{pro}\ar[u,swap,"\simeq"]
\end{tikzcd}
\]
The composition and the identity are clearly respected, and the same computation gives functoriality for the other component. 
\end{proof}
\end{prop}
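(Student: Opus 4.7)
My plan is to establish well-definedness via a Yoneda-type argument in the pro-category and then deduce bifunctoriality using the Artin--Mazur reindexing trick. To prove $(F,G)_{\RSC}^{pro}$ is well-defined up to canonical isomorphism, by the Yoneda lemma in $\pro\RSC_{\Nis}$ it suffices to check that for every $H=\varplim{\beta} H_\beta\in \pro\RSC_{\Nis}$, the abelian group $\Hom_{\pro\RSC_{\Nis}}((F,G)_{\RSC}^{pro},H)$ depends only on the pro-isomorphism class of $F$ and $G$ (and not on the chosen representing systems).

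To perform this check, I would unfold the pro-Hom using its defining formula, plug in the definition \eqref{eq:deftensorRSC} of $(F_i,G_j)_{\RSC_{\Nis}}=\omega_!(\omegaCI F_i\otCINis \omegaCI G_j)$, and then apply the adjunction $\omega_!\dashv \omegaCI$ (using that $\omegaCI$ is fully faithful, cf.\ paragraph~\ref{para:riassuntoomegaCIetc}, so that $\omega_!\omegaCI H_\beta\cong H_\beta$) together with the internal-hom adjunction $\otCINis\dashv \uHom$ in $\CIt_{\Nis}$, to obtain
\begin{equation*}
\Hom_{\pro\RSC_{\Nis}}((F,G)_{\RSC}^{pro},H)\;=\;\lim_{\beta}\,\colim_{i,j}\,\Hom_{\CIt_{\Nis}}\bigl(\omegaCI F_i,\,\uHom(\omegaCI G_j,\omegaCI H_\beta)\bigr).
\end{equation*}
Since $\omegaCI$ is a right adjoint it preserves limits and hence extends canonically to a levelwise functor $\pro\omegaCI\colon \pro\RSC_{\Nis}\to \pro\CIt_{\Nis}$. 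The right-hand side above is then just $\Hom_{\pro\CIt_{\Nis}}(\pro\omegaCI F,\uHom(\pro\omegaCI G,\pro\omegaCI H))$, which visibly depends only on the pro-objects $F$, $G$, $H$. This yields well-definedness.

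For bifunctoriality, given $f\colon F\to F'$ in $\pro\RSC_{\Nis}$, I would invoke the Artin--Mazur reindexing lemma \cite[Appendix 3.2]{am} to find isomorphisms $a\colon F\cong \varplim{\alpha}F_\alpha$, $b\colon F'\cong \varplim{\alpha}F'_\alpha$ over a common filtered index set and level morphisms $f_\alpha\colon F_\alpha\to F'_\alpha$ with $f=b^{-1}\circ \varplim{\alpha} f_\alpha \circ a$. Pairing levelwise against $G=\varplim{\beta}G_\beta$ produces a pro-morphism $\varplim{\alpha,\beta}(f_\alpha,\id)_{\RSC_{\Nis}}$; composing with the canonical isomorphisms of pro-pairings induced by $a$ and $b$ (which exist thanks to well-definedness) yields the desired map in $\pro\RSC_{\Nis}$. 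The main obstacle will be verifying that this construction is genuinely independent of the choices of reindexing: this requires showing that two reindexings of the same $f$ produce the same induced morphism of pro-pairings. The key point is that the isomorphism between two representations of $(F,G)_{\RSC}^{pro}$ supplied by the Yoneda argument above is canonical, so one obtains a commutative square of pro-pairings by naturality; composition and identities follow formally. Functoriality in the second variable follows by the symmetric argument.
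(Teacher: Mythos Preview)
Your proposal is correct and follows essentially the same route as the paper: Yoneda in the pro-category, unwinding $\Hom_{\pro\RSC_{\Nis}}$ via the definition of $(-,-)_{\RSC_{\Nis}}$, the adjunction $\omega_!\dashv\omegaCI$, and the internal-hom adjunction in $\CIt_{\Nis}$, followed by the Artin--Mazur reindexing for functoriality. The only cosmetic difference is that you package the result as $\Hom_{\pro\CIt_{\Nis}}(\pro\omegaCI F,\uHom(\pro\omegaCI G,\pro\omegaCI H))$, whereas the paper fixes $G$ and $H$ as constant objects and checks independence in $F$ alone (then appeals to symmetry); your shorthand for the pro-internal-hom is slightly informal, but the underlying argument is the same.
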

\begin{remark}If $\mathcal{C}$ is a category equipped with a monoidal structure $\otimes$ (in particular, associative), then the category $\pro\mathcal{C}$ is equipped with the level-wise monoidal structure $\{X_\alpha\}\tensor \{Y_\beta\} = \{X_\alpha\tensor Y_\beta\}$. See \cite[11]{IsaksenFausk}. Since the construction \eqref{eq:deftensorRSC} gives a monoidal structure on $\RSC_{\Nis}$ only in a weak sense (in particular, associativity  is not known to hold), we need to verify explicitly that the level-wise assignment \ref{def:tensorproRSC} is indeed well defined. Note that the argument is \emph{ad hoc}, and only proves the existence of a bi-functor at the level of pro-categories.
\end{remark}

The functoriality statement of the previous Proposition implies in particular that if $(F_i)_{i\in I}$ and $(G_j)_{j\in J}$ are diagrams in $\pro\RSC_{\Nis}$, then there is a natural map
\begin{equation}
\label{eq;colimits}
\varcolim^{\pro\RSC_{\Nis}}_{i,j}(F_i,G_j)_{\RSC}^{pro}\to (\varcolim^{\pro\RSC_{\Nis}}_i F_i,\varcolim^{\pro\RSC_{\Nis}}_j G_j)_{\RSC}^{pro}. 
\end{equation}
In general, there is no reason to expect that \eqref{eq;colimits} is an isomorphism (see also \cite[Ex. 11.2]{IsaksenFausk} for a similar problem).
Using the explicit description of the pro-left adjoint to $\Log$, we get then the following result.
\begin{thm}\label{thm:natmapRSCtensor}
For $F,G\in \CIltr_{\dNis}$, there exists a natural map \[\Rsc(F\otimes^{ltr} G)\to (\Rsc(F),\Rsc(G))_{\RSC}^{pro}.\]
\begin{proof} The tensor product in $\Shv_{\dNis}^{\rm ltr}(k)$ is given by Day convolution from the monoidal structure on $\SmlSm(k)$. So, if $F=\varcolim_{X\downarrow F} a_{\dNis}\Z_{tr}(X)$ and $G=\varcolim_{Y\downarrow G} a_{\dNis}\Z_{tr}(Y)$. Then \[F\otimes^{ltr} G=\varcolim_{X,Y}^{\Shltr}a_{\dNis}\Z_{tr}(X\times Y).\]
A Cartier compactification of $X\times Y$ is given by $\overline{X}\times \overline{Y}$, where $\overline{X}$ and $\overline{Y}$ are Cartier compactifications of $X$ and $Y$. Let $D_X'=\overline{X}-X$ and $D_Y'=\overline{Y}-Y$. Using the explicit description of the functor $\Rsc$ given in the proof of Lemma \ref{lem:defRSC}, we get
\begin{align*}
\Rsc&(F\otimes^{ltr} G)\\
&= \varcolim^{\pro\RSC_{\Nis}}\plim \omega_!h_0^\bcube(\overline{X}\times \overline{Y},D_X\times Y + X\times D_Y+n(D'_X\times Y + X\times D'_Y))\\
&= \varcolim^{\pro\RSC_{\Nis}}\plim \omega_!(h_0^\bcube(\overline{X},D_X+nD_X')\otimes_{\CI}h_0^{\bcube}(\overline{Y},D_Y+nD_Y')).
\end{align*}
Consider now the natural maps \[
h_0^\bcube(\overline{X},D_X+nD_X')\to \omegaCI\omega_!(h_0^\bcube(\overline{X},D_X+nD_X'),\] they give a natural map
\begin{align*}
    \plim &\omega_!(h_0^\bcube(\overline{X},D_X+nD_X')\otimes_{\CI}h_0^{\bcube}(\overline{Y},D_Y+nD_Y')) \\
    &\longrightarrow \plim \omega_!(\omegaCI\omega_!h_0^\bcube(\overline{X},D_X+nD_X')\otimes_{\CI}\omegaCI\omega_! h_0^{\bcube}(\overline{Y},D_Y+nD_Y'))\\
    & = \plim (\omega_!h_0^\bcube(\overline{X},D_X+nD_X'),\omega_! h_0^{\bcube}(\overline{Y},D_Y+nD_Y'))_{\RSC}
\end{align*}
By definition the last term is equal to \[
(\plim h_0^\bcube(\overline{X},D_X+nD_X'),\plim h_0^{\bcube}(\overline{Y},D_Y+nD_Y'))_{\RSC}^{pro}
\]
Hence we obtained a natural map
\begin{multline}
    \Rsc(F\otimes^{ltr} G)\to\\
    \varcolim^{\pro\RSC_{\Nis}}(\plim h_0^\bcube(\overline{X},D_X+nD_X'),\plim h_0^{\bcube}(\overline{Y},D_Y+nD_Y'))_{\RSC}^{pro}
\end{multline}
Finally, as observed in \eqref{eq;colimits} there is a natural map
\begin{multline*}
\varcolim^{\pro\RSC_{\Nis}}(\plim h_0^\bcube(\overline{X},D_X+nD_X'),\plim h_0^{\bcube}(\overline{Y},D_Y+nD_Y'))_{\RSC}^{pro}
\to \\
(\varcolim^{\pro\RSC_{\Nis}}\plim h_0^\bcube(\overline{X},D_X+nD_X'),\varcolim^{\pro\RSC_{\Nis}}\plim h_0^{\bcube}(\overline{Y},D_Y+nD_Y'))_{\RSC}^{pro}
\end{multline*}
and the last term is equal to $(\Rsc(F),\Rsc(G))_{\RSC}^{pro}$.
\end{proof}
\end{thm}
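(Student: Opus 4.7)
The plan is to construct the natural map by unraveling the explicit formula for $\Rsc$ from Lemma \ref{lem:defRSC} and the level-wise definition of the pro-tensor product. First, I would write $F = \varcolim_{X\downarrow F} a_{\dNis}\Z_{tr}(X)$ and $G = \varcolim_{Y\downarrow G} a_{\dNis}\Z_{tr}(Y)$. Since $\otimes^{ltr}$ on $\Shltr$ arises as Day convolution from the Cartesian product on $\SmlSm(k)$, this gives $F\otimes^{ltr} G = \varcolim_{X,Y}^{\Shltr} a_{\dNis}\Z_{tr}(X\times Y)$.

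Next, I would use Remark \ref{rmk;productofmodulus} to identify the associated reduced modulus pair of $X\times Y$ with the tensor in $\ulMCor$ of those of $X$ and $Y$, and observe that if $\overline{X}, \overline{Y}$ are Cartier compactifications (with boundaries $D_X', D_Y'$), then $\overline{X}\times \overline{Y}$ is a Cartier compactification of $X\times Y$ with boundary $D_X'\times \overline{Y} + \overline{X}\times D_Y'$. Combined with the compatibility of $h_0^\bcube$ with the monoidal structure on $\ulMCor$, this rewrites $\Rsc(F\otimes^{ltr} G)$ as
\[\varcolim^{\pro\RSC_{\Nis}}_{X,Y} \varplim{n}\, \omega_!\bigl(h_0^\bcube(\overline{X},D_X+nD_X')\otimes_{\CI}h_0^{\bcube}(\overline{Y},D_Y+nD_Y')\bigr).\]

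Third, the unit of the adjunction $\omega_! \dashv \omegaCI$ gives natural maps $h_0^\bcube(\sX) \to \omegaCI \omega_! h_0^\bcube(\sX)$ for each modulus pair $\sX$; applying $\omega_!$ to the induced morphism on the $\otimes_{\CI}$-tensor and using the definition \eqref{eq:deftensorRSC} of the lax monoidal structure on $\RSC_{\Nis}$ produces
\[\omega_!\bigl(h_0^\bcube(\sX)\otimes_{\CI} h_0^\bcube(\sY)\bigr) \longrightarrow (\omega_! h_0^\bcube(\sX),\, \omega_! h_0^\bcube(\sY))_{\RSC_{\Nis}}.\]
Passing to the inverse limit in $n$ level-wise, according to Definition \ref{def:tensorproRSC}, and then composing with the comparison map \eqref{eq;colimits} for the outer colimits over $X\downarrow F$ and $Y\downarrow G$, the target becomes $(\Rsc(F), \Rsc(G))_{\RSC}^{pro}$, again by the explicit description given in Lemma \ref{lem:defRSC}.

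The main obstacle is checking that the construction is independent of the choices of Cartier compactifications and hence yields a well-defined morphism in $\pro\RSC_{\Nis}$, as well as its naturality in $F$ and $G$. Both should follow from the cofinality arguments already used in the proof of Lemma \ref{lem:defRSC} together with the well-definedness of $(-,-)_{\RSC}^{pro}$ established in the preceding proposition. It is worth emphasizing that we only need the comparison map \eqref{eq;colimits} to exist, not to be an isomorphism; the latter is not expected to hold in general, which is precisely why the statement of the theorem produces a map rather than an equivalence.
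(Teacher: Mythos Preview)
Your proposal is correct and follows essentially the same approach as the paper: both proofs express $F\otimes^{ltr}G$ via Day convolution, rewrite $\Rsc(F\otimes^{ltr}G)$ using product Cartier compactifications and the monoidality of $h_0^\bcube$, then apply the unit of $\omega_!\dashv\omegaCI$ and the comparison map \eqref{eq;colimits} to reach $(\Rsc(F),\Rsc(G))_{\RSC}^{pro}$. Your explicit invocation of Remark \ref{rmk;productofmodulus} and your closing remark that only the existence of \eqref{eq;colimits} is needed are helpful clarifications that the paper leaves implicit.
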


\begin{corollary}\label{cor;monoidalfunctor}

Let $F,G\in \RSC$, then there exists a natural map
\begin{equation}
\label{equation2}
\Log(F)\otimes_{\CIlog_{\dNis}}\Log(G)\to \Log((F,G)_{\RSC_{\Nis}})
\end{equation}
\proof Let $(-)^p:\RSC_{\Nis}\to \pro\RSC_{\Nis}$ be the constant functor $F\mapsto \plim F$. Since $\Log$ is fully faithful, we have\[
F^p=\Rsc(\Log(F)),\quad G^p=\Rsc(\Log(G))
\]
By definition we have that\[
(F^p,G^p)_{\RSC}^{pro} = ((F,G)_{\RSC})^p
\]
so the previous lemma gives a natural map\[
\Rsc(\Log(F)\otimes^{ltr}\Log(G))\to ((F,G)_{\RSC})^p
\]
whose adjoint gives a map\[
\Log(F)\otimes^{ltr}\Log(G)\to \Log((F,G)_{\RSC}).
\]
Finally, since $\Log((F,G)_{\RSC})\in \CIltr_{\dNis}$, the previous map factors through the localization $h_0(\Log(F)\otimes^{ltr}\Log(G))=\Log(F)\otimes_{\CIltr_{\dNis}}\Log(G)$, giving the desired map.
\end{corollary}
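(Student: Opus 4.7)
The plan is to deduce this from Theorem \ref{thm:natmapRSCtensor} by transposing across the pro-adjunction $\Rsc \dashv \Log$. First I would apply Theorem \ref{thm:natmapRSCtensor} to the sheaves $\Log(F), \Log(G) \in \CIltr_{\dNis}$, obtaining a natural morphism
\[
\Rsc(\Log(F)\otimes^{ltr}\Log(G)) \longrightarrow (\Rsc(\Log(F)), \Rsc(\Log(G)))_{\RSC}^{pro}
\]
in $\pro\RSC_{\Nis}$. Writing $(-)^p\colon \RSC_{\Nis} \to \pro\RSC_{\Nis}$ for the constant-pro-system functor, the full faithfulness of $\Log$ (\cite[Thm. 0.2]{shuji}) combined with the adjunction of Lemma \ref{lem:defRSC} implies $\Rsc(\Log(H)) \cong H^p$ naturally for any $H \in \RSC_{\Nis}$. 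Directly from Definition \ref{def:tensorproRSC} the right-hand side collapses to $((F,G)_{\RSC_{\Nis}})^p$, so the displayed arrow becomes
\[
\Rsc(\Log(F)\otimes^{ltr}\Log(G)) \longrightarrow ((F,G)_{\RSC_{\Nis}})^p.
\]

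Applying the pro-adjunction $\Hom_{\pro\RSC_{\Nis}}(\Rsc(-), H^p) = \Hom_{\Shltr}(-, \Log(H))$ of Lemma \ref{lem:defRSC}, this transposes to a natural morphism $\Log(F)\otimes^{ltr}\Log(G) \to \Log((F,G)_{\RSC_{\Nis}})$ in $\Shltr(k,\Z)$. Finally, since the target lies in $\CIltr_{\dNis}$, the universal property of the left adjoint $h_0^{\rm ltr}$ to the inclusion (Proposition \ref{adjunction}) factors this map through
\[
h_0^{\rm ltr}(\Log(F)\otimes^{ltr}\Log(G)) = \Log(F)\otimes_{\CIltr_{\dNis}}\Log(G),
\]
yielding the desired arrow \eqref{equation2}. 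The main obstacle is purely bookkeeping with pro-objects: one must check that the identification $\Rsc\circ \Log \cong (-)^p$ and the collapse $(F^p, G^p)_{\RSC}^{pro} = ((F,G)_{\RSC_{\Nis}})^p$ are natural in both arguments and compatible with the map produced by Theorem \ref{thm:natmapRSCtensor}; this requires some care because $(-,-)_{\RSC}^{pro}$ is only a bifunctor in a weak sense, lacking honest associativity.
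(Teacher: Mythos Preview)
Your proposal is correct and follows essentially the same approach as the paper: apply Theorem \ref{thm:natmapRSCtensor} to $\Log(F)$ and $\Log(G)$, use full faithfulness of $\Log$ to identify $\Rsc(\Log(H))$ with the constant pro-object $H^p$, collapse the pro-tensor via Definition \ref{def:tensorproRSC}, transpose along the pro-adjunction of Lemma \ref{lem:defRSC}, and finally factor through $h_0^{\rm tr}$ using Proposition \ref{adjunction}. The only addition is your closing remark about bookkeeping with pro-objects, which the paper leaves implicit.
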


\section{Log reciprocity sheaves}\label{sec:logrec}

In this final section, 
we state a conjecture that will allow us to construct full subcategories $\LogRec$ and  $\LogRec^{\rm tr}$ respectively of $\Sh_{\Nis}(k,\Lambda)$ and $\Sh_{\Nis}^{\mathrm{tr}}(k,\Lambda)$ such that $\RSC_{\Nis}\subseteq \LogRec^{\tr}$.

Our definition generalizes the construction of \cite{KSY2} and it is very similar in spirit.

\begin{defn}\label{omegaCI}
We define two pairs of adjoint functors\[
\begin{tikzcd}
\omega_{\CI}^{\log}\colon \CIlog_{\dNis}\ar[r,shift right]&\Shv_{\Nis}(k,\Lambda)\colon\omega^{\CI}_{\log} \ar[l,shift right]&\omega_{\CI}^{\rm ltr}\colon \CIltr_{\dNis}\ar[r,shift right]&\Shv^{\mathrm{tr}}_{\Nis}(k,\Lambda)\colon\omega^{\CI}_{\rm ltr} \ar[l,shift right]
\end{tikzcd}
\]
where $\omega_{\CI}^{\log}:=\omega_{\sharp}i$ and $\omega^{\CI}_{\log}:=h^0\omega^*$, where $h^0$ is the right adjoint to the inclusion of Proposition \ref{prop;rightadjoint} (similarly for the case with transfers). The counit map $i_{\CIlog_{\dNis}}h^0\to id$ induces for all $F\in \Shv_{\dNis}(k,\Lambda)$ a canonical map
\begin{equation}\label{wanttobeinjective}
i\omega^{\CI}_{\log}F\to \omega^*F
\end{equation}
\end{defn}
\begin{prop}\label{lem:omegalowerCIffexact}
The compositions\[
\CIlog_{\dNis}\xrightarrow{i}\Shv^{\log}_{\dNis}\xrightarrow{\omega_{\sharp}^{\log}}\Shv_{\Nis}\qquad \CIltr_{\dNis}\xrightarrow{i^{\rm tr}}\Shv^{\rm ltr}_{\dNis}\xrightarrow{\omega_{\sharp}^{\rm ltr}}\Shv_{\Nis}^{\tr}
\]
are faithful and exact. In particular, both functors are conservative.
\begin{proof}
		Exactness follows from the exactness of $i^{\rm tr}$ and $\omega_{\sharp}^{\rm tr}$ (resp. $i^{\rm tr}$ and $\omega_{\sharp}^{\rm tr}$). To show faithfulness, it is enough to show that for all $F \in \CIlog_{\dNis}$ (resp. $\CIltr_{\dNis}$), the unit map\[
F\to \omega^{\CI}_{\log }\omega_{\sharp}^{\log} F\quad ({\rm resp. } 	F\to \omega^{\CI}_{\rm ltr}\omega_{\sharp}^{\rm ltr} F)
\]
is injective. By \cite[Theorem 5.10]{purity} we have that for all $X \in  \SmlSm(k)$, \[
F(X)\hookrightarrow  F(\ul{X}-|\partial X|) = \omega^{*}_{\log}\omega_{\sharp}^{\log} F\quad ({\rm resp.} \omega^{*}_{\rm ltr}\omega_{\sharp}^{\rm ltr} F)
\] 
hence $u\colon F\hookrightarrow \omega^{*}_{\log}\omega_{\sharp}^{\log} F$ (resp. $u^{\tr}\colon F\hookrightarrow \omega^{*}_{\log}\omega_{\sharp}^{\log} F$) is injective. Because $F$ is $\bcube$-local, the map $u$ (resp. $u^{\tr}$) factors through $\omega^{\CI}_{\log }\omega_{\sharp}^{\log} F$ (resp. $\omega^{\CI}_{\rm ltr}\omega_{\sharp}^{\rm ltr} F$), which concludes the proof.
\end{proof}
\end{prop}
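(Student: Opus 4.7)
The plan is to establish exactness and faithfulness as separate statements, handling the with-transfers case in parallel by the same argument. For exactness, I would observe that both $i$ and $\omega_\sharp^{\log}$ are exact. The inclusion $i\colon \CIlog_{\dNis}\hookrightarrow \Shlog(k,\Lambda)$ admits a left adjoint $h_0$ by Proposition \ref{adjunction} and a right adjoint $h^0$ by Proposition \ref{prop;rightadjoint}, hence it preserves both kernels and cokernels. The Kan extension $\omega_\sharp^{\log}$ is $t$-exact on the derived categories by Proposition \ref{prop;omeganis}, hence restricts to an exact functor on the hearts. The case of $i^{\mathrm{tr}}$ and $\omega_\sharp^{\mathrm{ltr}}$ follows verbatim.

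For faithfulness, I rely on the standard observation that an exact additive functor $\Phi$ between abelian categories is faithful if and only if it reflects zero objects: if $\Phi(f) = 0$, exactness gives $\Phi(\mathrm{im}\, f) = \mathrm{im}(\Phi(f)) = 0$, and conservativity forces $\mathrm{im}\, f = 0$. Thus it suffices to show that the unit $\eta_F \colon F \to \omega^*_{\log} \omega_\sharp^{\log} F$ of the adjunction $(\omega_\sharp^{\log},\omega^*_{\log})$ is injective for every $F \in \CIlog_{\dNis}$, since this then guarantees that $\omega_\sharp^{\log} F = 0$ implies $F = 0$.

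The key step is to identify, for any $X = (\underline{X}, \partial X) \in \SmlSm(k)$, the section $(\omega^*_{\log} \omega_\sharp^{\log} F)(X)$ with $F(\underline{X} - |\partial X|, \triv)$, and the unit $\eta_F(X)$ with the restriction along the open immersion $(\underline{X} - |\partial X|, \triv) \hookrightarrow X$. Indeed, the left adjoint to $\omega\colon \lSm(k)\to \Sm(k)$ is the trivial-log-structure functor $\lambda$, so on presheaves $\omega_\sharp^{\mathrm{pre}} F(Y) = F(\lambda Y) = F(Y, \triv)$ for $Y \in \Sm(k)$. Because every Nisnevich cover of $Y$ induces a strict (hence dividing) Nisnevich cover of $(Y, \triv)$, the presheaf $Y \mapsto F(Y, \triv)$ is already a Nisnevich sheaf, so no further sheafification is needed in computing $\omega_\sharp^{\log} F$. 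With this identification, Theorem \ref{nonderivedpurity} applied to the dense open $U = \underline{X} - |\partial X|$ gives exactly the required injectivity.

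The main obstacle is the identification of the unit with the restriction along $(\underline{X} - |\partial X|,\triv)\hookrightarrow X$, which hinges on verifying that the presheaf $Y \mapsto F(Y, \triv)$ is already a sheaf, so that the Kan extension along $\omega$ needs no sheafification when restricted to trivial-log objects. Once this is in place, the conclusion reduces to a direct application of the purity Theorem \ref{nonderivedpurity}. The case with transfers is analogous, using that the dividing Nisnevich topology is compatible with log transfers by \cite[Thm. 4.5.7]{bpo} to ensure the same sheafification-free computation applies.
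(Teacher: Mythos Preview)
Your proof is correct and follows essentially the same route as the paper: exactness from the exactness of $i$ and $\omega_\sharp$, and faithfulness from the injectivity of the unit $F\hookrightarrow \omega^*\omega_\sharp F$, which is the restriction map to the open complement of the boundary. The only difference is that you invoke the paper's own Theorem~\ref{nonderivedpurity} for this injectivity, whereas the paper cites an external reference, and you skip the (unnecessary for this proposition) factorization through $\omega^{\CI}_{\log}\omega_\sharp^{\log} F$.
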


We now state a conjecture, that we hope to prove soon:
\begin{conj}\label{conj:full}
The functors $\omega_{\CI}^{\log}$ and $\omega_{\CI}^{\rm ltr}$ are full.
\end{conj}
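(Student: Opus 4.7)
Since faithfulness of $\omega_{\CI}^{\log}$ and $\omega_{\CI}^{\rm ltr}$ is already established in Proposition \ref{lem:omegalowerCIffexact}, Conjecture \ref{conj:full} is equivalent to the assertion that the unit
\[
\eta_F\colon F \longrightarrow \omega^{\CI}_{\log}\omega_{\CI}^{\log} F
\]
of the adjunction $\omega_{\CI}^{\log}\dashv \omega^{\CI}_{\log}$ is an isomorphism for every $F\in \CIlog_{\dNis}$ (and analogously for $\CIltr_{\dNis}$). The argument in Proposition \ref{lem:omegalowerCIffexact} shows that $\eta_{F,X}$ is injective for every $X\in \SmlSm(k)$, since the purity map from Theorem \ref{nonderivedpurity} factors as
\[
F(X) \hookrightarrow \omega^{\CI}_{\log}\omega_{\CI}^{\log} F(X) \hookrightarrow (\omega^*\omega_{\CI}^{\log} F)(X) = F(X^o,\triv).
\]
What has to be proved is that the first inclusion is an equality, i.e.\ that one can characterize intrinsically the sections of $F(X^o,\triv)$ that are detected by $\omega^{\CI}_{\log}\omega_{\CI}^{\log}F$.

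By adjunction, a section of $\omega^{\CI}_{\log}\omega_{\CI}^{\log}F$ at $X$ corresponds to a morphism of Nisnevich sheaves
\[
\omega_{\CI}^{\log}\bigl(h_0\, a_{\dNis}\Lambda(X)\bigr) \longrightarrow \omega_{\CI}^{\log} F.
\]
A first step of the proof is therefore to give an explicit description of $\omega_{\CI}^{\log}(h_0 a_{\dNis}\Lambda(X))$, ideally identifying it with (the sheafification of) a $0$-th log Suslin-type homology sheaf of $X$ restricted to trivial log structure. Once this is in place, the surjectivity of $\eta_{F,X}$ is reduced to the following statement: every such morphism into $\omega_{\CI}^{\log}F = F(-,\triv)$ extends the underlying section over $X^o$ across $|\partial X|$ to a genuine section of $F$ over $X$.

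Following the pattern established in the proof of Theorem \ref{thm;purity} (in particular Claim \ref{claim;superdirtytrick}), I would carry out this extension by induction on the number of irreducible components of the strict normal crossings divisor $|\partial X|$, combining Gabber's geometric presentation with the Thom space decomposition \cite[Thm.~7.5.4]{bpo}. The inductive step peels off one boundary component and uses the Mayer--Vietoris square of Lemma \ref{lm;mayervietoris} together with strict $\bcube$-invariance of $F$ to propagate the extension. The base case concerns a single smooth boundary component supported over a local curve, where the obstruction to extending a section from $X^o$ to $X$ should live on $\bcube_{|\partial X|}$, and hence be forced to vanish by the $\bcube$-invariance of $F$.

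The main obstacle is precisely this base case. Concretely, one needs a sharp characterization of the image of $F(X)\hookrightarrow F(X^o,\triv)$ in terms of a $\bcube$-invariance condition along $|\partial X|$, which is a logarithmic analog of S.~Saito's purity theorem \cite[Thm.~0.1]{S-purity} for reciprocity sheaves. Such a result is not contained in the present paper and appears to require substantial new input (for instance, a workable theory of generic residues along boundary components of $X\in \SmlSm(k)$ for objects of $\CIlog_{\dNis}$). This is presumably the reason why the statement appears here as a conjecture rather than as a theorem.
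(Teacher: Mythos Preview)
The statement you are addressing is labeled \emph{Conjecture} in the paper and is explicitly introduced with the words ``We now state a conjecture, that we hope to prove soon''; the paper gives no proof, so there is nothing to compare your proposal against. Your write-up is not a proof either, and you correctly recognize this in your final paragraph: the reduction to surjectivity of the unit $\eta_F$ is standard, but the crucial extension step---lifting a section of $F$ over $X^o$ to one over $X$ using only strict $\bcube$-invariance---is exactly the missing ingredient, and nothing in the paper (purity, Thom decompositions, or the inductive scheme of Theorem~\ref{thm;purity}) supplies it. Your diagnosis that this would require a logarithmic analogue of Saito's purity theorem for reciprocity sheaves is reasonable, and indeed the paper's later reference to \cite{purity} (invoked already in Proposition~\ref{lem:omegalowerCIffexact}) suggests the authors are aware that such external input is needed.
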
	


\begin{defn} Assume Conjecture \ref{conj:full}
	Let $\LogRec$ (resp. $\LogRec^{\rm tr}$) denote the essential image of $\omega_{\CI}^{\log}$ (resp. $\omega_{\CI}^{\rm ltr}$), i.e. categories of sheaves $F\in \Shv_{\Nis}$ (resp. $\Shv^{\tr}_{\Nis}$) such that there exists $G\in \CIlog_{\dNis}$ (resp. $\CIltr_{\dNis}$) such that $F=\omega_{\CI}G$.
	
	By definition, $\omega_{\CI}^{\log}$ (resp. $\omega_{\CI}^{\rm ltr}$) induces an equivalence between $\CIlog_{\dNis}$ (resp. $\CIltr_{\dNis}$) and $\LogRec$ (resp. $\LogRec^{\tr}$) with quasi-inverse the restriction of $\omega^{\CI}$.
	\end{defn}
\begin{remark}\label{rem:LogRecebello}
	Assume Conjecture \ref{conj:full}. Let $F\in \LogRec$ and let $G\in \CIltr_{\dNis}$ such that $F=\omega_\sharp G$. We deduce some immediate properties: 
	\begin{enumerate}
		\item For all $X\in \Sm$ and $U\subseteq X$ dense open, Theorem \ref{nonderivedpurity} implies that $F(X)\hookrightarrow F(U)$ is injective.
		\item For all $n$ and all $X\in \Sm$, we have that 
		\begin{multline*}
		a_{\Nis}\mathbf{H}^n_{\Nis}(\_\times X,F)=a_{\Nis}H_n(\uHom(X,F_{\Nis}))\overset{(*1)}{=}a_{\Nis}H_n(\uHom(X,\omega_\sharp G_{\dNis})) \overset{(*2)}{=}\\
		 a_{\Nis}H_n(\omega_\sharp\uHom((X,\triv), G_{\dNis}))\overset{(*3)}{=}\omega_\sharp a_{\dNis}H_n(\uHom((X,\triv), G_{\dNis}))
		\end{multline*}
		where $(*1)$  comes from Proposition \ref{prop;omeganis}, $(*2)$ comes by definition of $\omega_\sharp$ and $(*3)$ from the fact that $\omega_\sharp$ is $t$-exact and \cite[(4.3.4)]{bpo}.
		By Corollary \ref{cor;ext}, $a_{\dNis}H_n(\uHom((X,\triv), G_{\dNis}))\in \CIltr_{\dNis}$, so the cohomology sheaf $a_{\Nis}\mathbf{H}^n_{\Nis}(\_\times X,F)\in \LogRec$.
	\end{enumerate}
\end{remark}

\begin{thm}\label{prop:RSCNissubcatLogRec}
		Assume Conjecture \ref{conj:full}. The category $\RSC_{\Nis}$ is a full subcategory of $\LogRec$. In particular,
	\begin{equation}\label{logvsomegaCI}
	\Log = \omega^{\CI}_{\log}i_{\RSC}
	\end{equation}
	\begin{proof}
	Since $\RSC_{\Nis}$ is a full subcategory of $\Sh_{\Nis}(k,\Lambda)$, it is enough to show that for every $F\in \RSC_{\Nis}$ there exists $G\in \CIlog_{\dNis}$ such that $F=\omega_\sharp G$. 
	
	By \cite[Section 4]{shuji} we have that
	\begin{equation}\label{logvsomegaCI2}
	\omega_\sharp \Log(F)(X) = \omega^{\CI}F(X,\emptyset) = F(X)
	\end{equation}
	Hence $\RSC_{\Nis}$ is a full subcategory of $\LogRec$.
	
	Finally, since $\omega_\sharp$ is an equivalence, \eqref{logvsomegaCI} follows directly from \eqref{logvsomegaCI2}.
	\end{proof}
	\end{thm}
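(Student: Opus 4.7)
The plan is to take $G = \Log(F) \in \CIltr_{\dNis}$ as the natural preimage under $\omega^{\rm ltr}_{\CI}$, and verify the identity $\omega^{\rm ltr}_{\CI}(\Log(F)) = F$ in $\Shv^{\tr}_{\Nis}(k,\Lambda)$. Since the preceding discussion already provides $\Log(F) \in \CIltr_{\dNis}$, the proof reduces to an evaluation computation.

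First I would evaluate on an arbitrary $X \in \Sm(k)$: using that $\omega : \lCor(k) \to \Cor(k)$ has the right adjoint $\lambda$ given by $X \mapsto (X,\triv)$, the left Kan extension $\omega_\sharp$ satisfies the elementary identity $(\omega_\sharp H)(X) = H(X,\triv)$ for any $H \in \Shv^{\rm ltr}_{\dNis}(k,\Lambda)$, as recorded in \cite[Section 4]{shuji}. Applied to $H = \Log(F) = i^{\rm tr}\Log(F)$, this yields $\omega^{\rm ltr}_{\CI}(\Log(F))(X) = \Log(F)(X,\triv)$. By the very definition of $\Log$, the right-hand side equals $\omega^{\CI}F(\underline{X}, |\partial X|_{red}) = \omega^{\CI}F(X,\emptyset)$ since $\partial X = \emptyset$, and on the pair with empty modulus the identity $\omega_!\omega^{\CI}F = F$ recalled in Paragraph \ref{para:riassuntoomegaCIetc} gives $\omega^{\CI}F(X,\emptyset) = F(X)$. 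Chaining these equalities produces the desired natural isomorphism $\omega^{\rm ltr}_{\CI}(\Log(F)) \cong F$.

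Second, I would deduce membership in $\LogRec^{\tr}$ and fullness in one go. The fact that $F$ lies in the essential image of $\omega^{\rm ltr}_{\CI}$ is exactly the content of the identity above. Fullness of the inclusion $\RSC_{\Nis} \hookrightarrow \LogRec^{\tr}$ is then automatic: both categories are defined as full subcategories of $\Shv^{\tr}_{\Nis}(k,\Lambda)$, so $\Hom$-sets between objects of $\RSC_{\Nis}$ agree in all three categories.

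Third, for the formula $\Log = \omega^{\CI}_{\log}i_{\RSC}$ (read with transfers, as $\omega^{\CI}_{\rm ltr}$), I would invoke Conjecture \ref{conj:full}, which upgrades $\omega^{\rm ltr}_{\CI}$ to an equivalence $\CIltr_{\dNis} \isom \LogRec^{\tr}$ whose quasi-inverse is $\omega^{\CI}_{\rm ltr}$ restricted to $\LogRec^{\tr}$. Applying this quasi-inverse to the identity $\omega^{\rm ltr}_{\CI}(\Log F) = F$ yields $\Log F = \omega^{\CI}_{\rm ltr}(F)$, which is the claimed equation. The only subtle step is the evaluation identity of the first paragraph; everything else is a formal consequence of the adjoint setup in Definition \ref{omegaCI} together with the already-quoted results of Saito and Kahn--Saito--Yamazaki.
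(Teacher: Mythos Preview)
Your proof is correct and follows essentially the same approach as the paper: both take $G=\Log(F)$, verify $\omega_\sharp\Log(F)(X)=\omega^{\CI}F(X,\emptyset)=F(X)$ via \cite[Section 4]{shuji}, note that fullness is automatic since both are full subcategories of $\Shv^{\tr}_{\Nis}$, and deduce \eqref{logvsomegaCI} from the equivalence given by Conjecture \ref{conj:full}. Your write-up is slightly more explicit about why $(\omega_\sharp H)(X)=H(X,\triv)$, but the argument is otherwise identical.
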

\begin{cor}\label{cor:globalinjcohoRSC}	Assume Conjecture \ref{conj:full}. Let $F\in \RSC_{\Nis}$ and let $X\in \Sm(k)$. Then the cohomology  of $F$ satisfies
        \[  \mathbf{H}^n(X\times Y, F) \hookrightarrow   \mathbf{H}^n(X\times \eta_Y,F) \]
    for every $n\geq0$ and $Y$ henselian  local essentially smooth $k$-scheme with generic point $\eta_Y$.
\end{cor}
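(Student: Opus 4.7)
My plan is to combine the identification of $F$'s twisted Nisnevich cohomology sheaves with objects of $\LogRec^{\tr}$ (a transfer-enriched variant of Remark \ref{rem:LogRecebello}(2)) together with the purity property in Theorem \ref{nonderivedpurity} for strictly $\bcube$-invariant sheaves.

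First I would apply Theorem \ref{prop:RSCNissubcatLogRec}: under Conjecture \ref{conj:full}, $F = \omega_\sharp^{\ltr} G$ for some $G \in \CIltr_\dNis$. For the fixed $X \in \Sm(k)$ I consider the cohomology presheaf $\mathcal{H}^n \colon Z \mapsto \mathbf{H}^n_\Nis(X \times Z, F)$ on $\Sm(k)$ and replay the computation of Remark \ref{rem:LogRecebello}(2) in the with-transfers setting. Using Proposition \ref{prop;omeganis} ($t$-exactness of $\omega^*$ and $\omega_\sharp^{\ltr}$) together with the natural commutation of the internal Hom $\uHom(X, -)$ with $\omega_\sharp^{\ltr}$ (since $\lambda(X) = (X, \triv)$), I expect to obtain
\[
a_\Nis \mathcal{H}^n \;=\; \omega_\sharp^{\ltr} H, \qquad H := a_\dNis H_n\bigl(\uHom((X,\triv), G_\dNis)\bigr),
\]
with the membership $H \in \CIltr_\dNis$ guaranteed by Corollary \ref{cor;ext}.

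Next I need to evaluate these sheaves at an essentially smooth henselian local $Y$. The Leray spectral sequence for the projection $\pi_Y \colon X \times Y \to Y$,
\[
E_2^{p,q} = \mathbf{H}^p_\Nis\bigl(Y, R^q\pi_{Y,*} F\bigr) \Longrightarrow \mathbf{H}^{p+q}(X \times Y, F),
\]
collapses because higher Nisnevich cohomology vanishes on the henselian local $Y$, yielding $\mathbf{H}^n(X \times Y, F) = \Gamma(Y, R^n\pi_{Y,*} F) = (a_\Nis \mathcal{H}^n)(Y)$. Writing $Y = \lim_i Y_i$ with $Y_i \in \Sm(k)$ and applying the adjunction $\lambda \dashv \omega$, this reads $(a_\Nis \mathcal{H}^n)(Y) = H((Y,\triv))$, where $(Y, \triv)$ is viewed as an object of $\widetilde{\SmlSm}(k)$ (and $H((Y,\triv))$ is defined as the appropriate filtered colimit). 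The parallel identification at the generic point gives $\mathbf{H}^n(X \times \eta_Y, F) = H((\eta_Y, \triv))$.

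Finally I apply Theorem \ref{nonderivedpurity} to $H \in \CIltr_\dNis$: for each $(Y_i, \triv) \in \SmlSm(k)$ and each dense open $U_i \subseteq Y_i$, $H((Y_i,\triv)) \hookrightarrow H((U_i, \triv))$. Taking filtered colimits over $i$ and over the cofinal system of dense opens (and using that filtered colimits of $\Lambda$-modules preserve injections) produces $H((Y,\triv)) \hookrightarrow H((\eta_Y,\triv))$, which by the previous paragraph is exactly the desired $\mathbf{H}^n(X \times Y, F) \hookrightarrow \mathbf{H}^n(X \times \eta_Y, F)$. The only real obstacle I foresee is bookkeeping: checking that the computation of Remark \ref{rem:LogRecebello}(2) is genuinely transfer-compatible, and that the Leray collapse together with the $\widetilde{\SmlSm}$-extension of purity are legitimate for the essentially smooth $Y$. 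Both should reduce routinely to the finite type case via the filtered-colimit description already built into the paper.
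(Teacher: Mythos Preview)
Your proposal is correct and follows essentially the same approach as the paper, which simply cites Theorem \ref{prop:RSCNissubcatLogRec} and Remark \ref{rem:LogRecebello}. You are unpacking exactly what those references encode: writing $F=\omega_\sharp G$ with $G\in\CIltr_{\dNis}$, identifying $a_{\Nis}\mathbf{H}^n(X\times -,F)$ with $\omega_\sharp$ of a strictly $\bcube$-invariant sheaf via Corollary \ref{cor;ext}, and then invoking purity (Theorem \ref{nonderivedpurity}); your Leray-collapse and filtered-colimit steps are precisely the routine bookkeeping needed to evaluate at the essentially smooth henselian local $Y$ and pass to $\eta_Y$, which the paper leaves implicit.
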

\begin{proof}It follows immediately from Theorem \ref{prop:RSCNissubcatLogRec} and Remark \ref{rem:LogRecebello}.
    \end{proof}
Let $i_{\RSC}$ (resp. $i_{\RSC}^{\log}$) denote the inclusion of $\RSC_{\Nis}$ in $\Shv^{\tr}_{\Nis}(k)$ (resp. in $\LogRec$).
Recall by \cite{KSY2} that $i_{\RSC}$ has a pro-left adjoint $\rho$ such that for $X\in \Sm(k)$ and $\overline{X}$ a Cartier compactification with $D=\overline{X}-X$, then\[
\rho(\Z_{\tr}(X))=\plim \omega_!h_0^{\bcube}(\overline{X},nD).
\]

\begin{prop} 	Assume Conjecture \ref{conj:full}. 
The functor $i_{\RSC}^{\log}$ has a pro-left adjoint $\rho_{\log}$, which factors $\rho$. In particular,\[
	\Rsc = \rho_{\log}\omega_{\CI}^{\log}
	\]	
\begin{proof}

Since $i_{\RSC}=i_{\LogRec}i_{\RSC}^{\log}$ and $i_{\LogRec}$ is fully faithful, for $F\in \Shv^{\tr}_{\Nis}$ $G\in \RSC_{\Nis}$ we have that\begin{multline*}
	\Hom_{\pro\RSC}(\rho i_{\LogRec}F,G) = \Hom_{\Shv^{\tr}_{\Nis}}(i_{\LogRec}F,i_{\LogRec}i_{\RSC}^{\log}G) =\\ \Hom_{\Shv^{\tr}_{\Nis}}(i_{\LogRec}F,i_{\LogRec}i_{\RSC}^{\log}G) = \Hom_{\LogRec}(F,i_{\RSC}^{\log}G).
\end{multline*}
	Finally, for $F\in \CIltr_{\dNis}$ and $G\in \RSC_{\Nis}$, we have that\begin{align*}
		\Hom_{\pro\RSC}(\Rsc(F),G) &= \Hom_{\CIltr_{\dNis}}(F,\Log(G))\\
		&=\Hom_{\CIltr_{\dNis}}(F,\omega^{\CI}_{\log}i_{\RSC}^{\log}G)\\
		&=\Hom_{\Shv^{\tr}_{\Nis}}(i_{\LogRec}\omega_{\CI}^{\log}F,i_{\RSC}G)\\
		&=\Hom_{\pro\RSC}(\rho_{\log}\omega_{\CI}F,G)
	\end{align*}
	\end{proof}
	\end{prop}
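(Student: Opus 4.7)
The plan is to exploit Theorem \ref{prop:RSCNissubcatLogRec}, which gives the factorization $i_{\RSC} = i_{\LogRec} \circ i_{\RSC}^{\log}$ through the fully faithful inclusion $i_{\LogRec}\colon \LogRec \hookrightarrow \Shv^{\tr}_{\Nis}(k,\Lambda)$. Since $i_{\RSC}$ already admits a pro-left adjoint $\rho$ by \cite{KSY2}, the natural candidate for a pro-left adjoint to $i_{\RSC}^{\log}$ is simply $\rho_{\log} := \rho \circ i_{\LogRec}$; verifying this is a formal adjunction chase, and the statement ``$\rho_{\log}$ factors $\rho$'' becomes tautological. The equality $\Rsc = \rho_{\log}\, \omega_{\CI}^{\log}$ will then follow by unwinding the universal property of $\Rsc$, using Theorem \ref{prop:RSCNissubcatLogRec} to rewrite $\Log$ in terms of $\omega^{\CI}_{\log}$ and $i_{\RSC}^{\log}$.

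For the first part, I would invoke the full faithfulness of $i_{\LogRec}$ and the pro-adjunction $(\rho, i_{\RSC})$ to obtain, for $F \in \LogRec$ and $G \in \RSC_{\Nis}$,
\begin{align*}
\Hom_{\pro\RSC}(\rho(i_{\LogRec} F), G) &= \Hom_{\Shv^{\tr}_{\Nis}}(i_{\LogRec} F, i_{\RSC} G) \\
&= \Hom_{\Shv^{\tr}_{\Nis}}(i_{\LogRec} F, i_{\LogRec}\, i_{\RSC}^{\log} G) \\
&= \Hom_{\LogRec}(F, i_{\RSC}^{\log} G),
\end{align*}
so that $\rho_{\log} := \rho \circ i_{\LogRec}$ is the desired pro-left adjoint. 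By Yoneda this identifies $\rho_{\log}$ uniquely, and the factorization of $\rho$ through $\rho_{\log}$ on $\LogRec$ holds by construction.

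For the second claim, I would run adjunctions in parallel. For $F \in \CIltr_{\dNis}$ and $G \in \RSC_{\Nis}$:
\begin{align*}
\Hom_{\pro\RSC}(\Rsc(F), G) &= \Hom_{\CIltr_{\dNis}}(F, \Log(G)) \\
&= \Hom_{\CIltr_{\dNis}}(F, \omega^{\CI}_{\log}\, i_{\RSC}(G)) \\
&= \Hom_{\Shv^{\tr}_{\Nis}}(\omega_{\CI}^{\log} F, i_{\LogRec}\, i_{\RSC}^{\log} G) \\
&= \Hom_{\LogRec}(\omega_{\CI}^{\log} F, i_{\RSC}^{\log} G) \\
&= \Hom_{\pro\RSC}(\rho_{\log}\, \omega_{\CI}^{\log} F, G),
\end{align*}
where the first equality is the pro-adjunction of Lemma \ref{lem:defRSC}, the second is Theorem \ref{prop:RSCNissubcatLogRec}, the third uses the $(\omega_{\CI}^{\log}, \omega^{\CI}_{\log})$-adjunction together with $i_{\RSC} = i_{\LogRec}\, i_{\RSC}^{\log}$ and the fact that $\omega_{\CI}^{\log}$ lands in $\LogRec$ by definition, the fourth is the full faithfulness of $i_{\LogRec}$, and the last is the adjunction just established. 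Yoneda then yields $\Rsc = \rho_{\log}\, \omega_{\CI}^{\log}$.

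The main obstacle is not in this Proposition itself but in its prerequisites: the existence of the right adjoint $\omega^{\CI}_{\log}$ (Proposition \ref{prop;rightadjoint}, which requires the Grothendieck abelian structure of $\CIltr_{\dNis}$ coming from the homotopy $t$-structure), the identification $\Log = \omega^{\CI}_{\log}\, i_{\RSC}$ of Theorem \ref{prop:RSCNissubcatLogRec}, and crucially Conjecture \ref{conj:full}, which legitimizes $\LogRec$ as a full subcategory and thereby provides the inclusion $i_{\LogRec}$. Granting these, the proof is purely formal and needs no computation beyond the two adjunction chases above.
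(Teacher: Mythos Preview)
Your proof is correct and follows essentially the same approach as the paper: both define $\rho_{\log}$ as $\rho\circ i_{\LogRec}$ and verify the pro-adjunction via the same chain of identifications, then deduce $\Rsc=\rho_{\log}\,\omega_{\CI}^{\log}$ by the same adjunction chase through $\Log=\omega^{\CI}_{\log}\,i_{\RSC}$. If anything, your version is slightly cleaner in distinguishing $i_{\RSC}$ from $i_{\RSC}^{\log}$ and in making the intermediate full-faithfulness step explicit.
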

	\begin{remark} 	Assume Conjecture \ref{conj:full}.
	Since $\CIltr_{\dNis}$ is a symmetric monoidal Grothendieck abelian category, then $\LogRec$ is symmetric monoidal with tensor product given by\[
	F\otimes_{\LogRec} G:=\omega_{\sharp}(h_0(\omega^{\CI}_{\log}F\otimes^{\mathrm{ltr}} \omega^{\CI}_{\log} G)).
	\] 
	By \ref{cor;monoidalfunctor}, for all $F,G\in \RSC_\Nis$ we have a map\[
	F\otimes_{\LogRec} G\to (F,G)_{\RSC}
	\]
	\end{remark}
If $\ch(k)\neq 0$, this map is not an isomorphism (see below). We do not know whether we expect it to be an isomotphism when $\ch(k)=0$: this would prove that $(\_,\_)_{\RSC_{\Nis}}$ defines a monoidal structure on $\RSC_{\Nis}$.
\begin{para}Let $F,G\in \RSC_{\Nis}$ and let $F'\subseteq \omegaCI F$ such that $\omega_!F'=F$ (in the language of \cite{RS}, $F'$ corresponds to a semi-continuous conductor of $F$ different from the motivic conductor). By construction, there exists a canonical map
\begin{equation}\label{eq;otCIconductors}
\omega_!(F'\otCINis \omegaCI G)\to\omega_!(\omegaCI F\otCINis \omegaCI G) = (F,G)_{\RSC}
\end{equation}
This map is surjective: let $Q$ be the cokernel of the inclusion $F'\to \omega^{\CI}F$  such that $\omega_!Q=0$. Hence, since $\_\otimes_{\CI} \omegaCI G$ is right exact, there is a right exact sequence\[
F'\otCINis \omegaCI G\to \omega^{\CI}F\otCINis \omegaCI G \to Q\otCINis \omegaCI G\to 0
\]
and since $Q\otCINis \omegaCI G$ is a quotient of $Q\otimes_{\ulMNST} \omegaCI G$ and $\omega_!$ is exact and monoidal in $\ulMNST$, we conclude $\omega_!(Q\otimes_{\CI} \omegaCI G)=0$, which shows the surjectivity of \eqref{eq;otCIconductors}.
\end{para}
The kernel of \eqref{eq;otCIconductors} incapsulates the obstruction to the associativity of $(\_,\_)_{\RSC}$, and it seems to be very difficult to compute in general. We know that it is not trivial if $\ch(k)\neq 0$: see \cite[Theorem 4.17]{RSY} and \cite[Theorem 5.19]{RSY} for an explicit computation. 

On the other hand, we do not have any counterexamples if $\ch(k)=0$, hence we do not know whether to expect that the map above is an isomorphism. In this direction, we have the following result:
\begin{prop}\label{prop:tensorfactors} 	Assume Conjecture \ref{conj:full}.
Let $F,G\in \RSC_{\Nis}$. Then for all $F'\subseteq \omegaCI F$ (in $\ulMNST$) such that $\omega_! F' = F$, the canonical map\[
	F\otimes_{\LogRec}G \to (F,G)_{\RSC}
	\]
factors through $\omega_!(F'\otCINis \omegaCI G)$.
	
	\begin{proof}
		Let $(-)^{\log}$ be the functor of \cite{shuji} and recall that $\Log(F)=(\omegaCI F)^{\log}$. Since $\Log(F)=(F')^{\log}$ by construction, we can look at the  diagram\[
		\begin{tikzcd}
		\Log(F)\otimes_{\CIltr_{\dNis}} \Log(G)\ar[r] &(\omegaCI F\otCINissp\omegaCI G)^{\log}\\
		(F')^{\log}\otimes_{\CIltr_{\dNis}} \Log(G)\ar[u,equal] &(F'\otCINissp\omegaCI G)^{\log}\ar[u]
		\end{tikzcd}
		\] 
It is enough to show that there is a map \[
		(F')^{\log}\otimes_{\CIltr_{\dNis}} \Log(G)\to (F'\otCINissp\omegaCI G)^{\log}.
		\]
		that makes the diagram above commutative. By adjunction, it is enough to construct a map\[
		(F')^{\log}\to \uHom_{\Shltr}(\Log(G),(\omegaCI F\otCINissp \omegaCI G)^{\log}).
		\]
		that factors the map
		\begin{equation}\label{eq;easymap}
		(F')^{\log}=\Log(F)\to \uHom_{\Shltr}(\Log(G),(\omegaCI F\otCINissp \omegaCI G)^{\log}).
		\end{equation}
		Consider the following map given by the closed monoidal structure of $\CItsp_{\Nis}$ (see \cite[\S 3]{ms}):
		\begin{equation}\label{eq;firstpart}
		F'\to \uHom_{\CItsp_{\Nis}}(\omegaCI G,F'\otCINissp \omegaCI G).
		\end{equation}
		Let $X\in \SmlSm(k)$ and let $\sX\in \ulMCor$ be the corresponding reduced modulus pair. By construction, we have that
		\begin{multline}\label{eq;map1}
		(\uHom_{\CI}(\omegaCI G,F'\otCINissp \omegaCI G))^{\log}(X)=\Hom_{\ulMPST}(\omegaCI G\otimes \Z_{\tr}(\sX),F'\otCINissp \omegaCI G)=\\
		\Hom_{\ulMPST}(\omegaCI G,\uHom_{\CI}(h_0^{\bcube}(\sX),F'\otCINissp \omegaCI G)).
		\end{multline}
		Then the unit $id\to \omegaCI\omega_!$ induces the following map:
		\begin{multline}\label{eq;map2}
		\Hom_{\ulMPST}(\omegaCI G,\uHom_{\CI}(h_0^\bcube(\sX),F'\otCINissp \omegaCI G))\to \\
		\Hom_{\ulMPST}(\omegaCI G,\omegaCI\omega_!\uHom_{\CI}(h_0^{\bcube}(\sX),F'\otCINissp \omegaCI G))\overset{(*1)}{=}\\
		\Hom_{\RSC}(G,\omega_!\uHom_{\CI}(h_0^{\bcube}(\sX),F'\otCINissp \omegaCI G))\overset{(*2)}{=}\\
		\Hom_{\Shltr}(\Log(G),\Log(\omega_!\uHom_{\CI}(h_0^{\bcube}(\sX),F'\otCINissp \omegaCI G)))\overset{(*3)}{=}\\
		\Hom_{\Shltr}(\Log(G),(\uHom_{\CI}(h_0^{\bcube}(\sX),F'\otCINissp \omegaCI G))^{\log})
		\end{multline}
		where $(*1)$ (resp. $(*2)$, resp. $(*3)$) follows from the full faithfulness of $\omegaCI$ (resp. the full faithfulness of $\Log$, resp. the fact that $\Log(\omega_!) = (\_)^{\log}$, see \cite[Corollary 2.6 (3)]{shuji}).
		
		Finally, fix $Y\in \SmlSm(k)$ and let $\sY\in \ulMCor$ be the corresponding reduced modulus pair.
		
		We have that
		\begin{multline}\label{eq;map3}
		(\uHom_{\CI}(h_0^\bcube(\sX)F'\otCINissp \omegaCI G))^{\log}(Y)=\uHom_{\CI}(h_0^\bcube(\sX\otimes \sY),F'\otCINissp \omegaCI G)=\\
		(F'\otCINis \omegaCI G)(\sX\otimes \sY)\overset{(*)}{=}(F'\otCINissp \omegaCI G)^{\log}(X\times Y)=\\
		\uHom_{\Shltr}(\Z_{\mathrm{ltr}}(X),(F'\otCINissp \omegaCI G)^{\log})(Y)
		\end{multline}
		where $(*)$ is true by the observation in Remark \ref{rmk;productofmodulus}. We conclude that:
		\begin{multline}\label{eq;map4}
		\Hom_{\Shltr}(\Log(G),(\uHom_{\CI}(h_0^\bcube(\sX),F'\otCINissp \omegaCI G)^{\log})=\\
		\Hom_{\Shltr}(\Log(G),\uHom_{\Shltr}(\Z_{\mathrm{ltr}}(X),(F'\otCINissp \omegaCI G)^{\log}))=\\
		\uHom_{\Shltr}(\Log(G),(F'\otCINissp \omegaCI G)^{\log})(X)
		\end{multline}
		
		Putting \eqref{eq;map1}, \eqref{eq;map2}, \eqref{eq;map3} and \eqref{eq;map4} together, we have a map
		\begin{equation}\label{eq;secondpart}
		(\uHom_{\CI}(\omegaCI G,F'\otCINissp \omegaCI G))^{\log}\to \uHom_{\Shltr}(\Log(G),(F'\otCINissp \omegaCI G)^{\log})
		\end{equation} 
		
		Hence by applying $(\_)^{\log}$ to \eqref{eq;firstpart} and composing with \eqref{eq;secondpart} we get the map
		\begin{multline}\label{eq;desiredmap}
		(F')^{\log}\to (\uHom_{\CI}(\omegaCI G,F'\otCINissp \omegaCI G))^{\log}\to \uHom_{\Shltr}(\Log(G),(F'\otCINis \omegaCI G)^{\log})\\
		=\uHom_{\Shltr}(\Log(G),\Log\omega_!(F'\otCINissp \omegaCI G))
		\end{multline}
		
		Finally, notice that the map \eqref{eq;map2} factors the map
		\begin{multline*}
		\Hom_{\ulMPST}(\omegaCI G,\uHom_{\CI}(h_0^\bcube(\sX),F'\otCINissp \omegaCI G))\to\\ 
		\Hom_{\ulMPST}(\omegaCI G,\omegaCI\omega_!\uHom_{\CI}(h_0^{\bcube}(\sX),\omegaCI\omega_!F'\otCINissp \omegaCI G)).
		\end{multline*}
		So, since $\omega_!F'=F$ and $\omega_!(\omegaCI F\otCINissp \omegaCI G))=(F,G)_{\RSC}$, the equalities of \ref{eq;map2}, \ref{eq;map3} and \ref{eq;map4} with $\omegaCI F$ instead of $F'$ conclude that \eqref{eq;desiredmap} factor \eqref{eq;easymap}. This concludes the proof.
	\end{proof}
\end{prop}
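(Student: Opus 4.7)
The plan is to translate the factorization problem into the log-transfers setting and exploit the closed monoidal structure on $\CItsp_{\Nis}$. By Theorem \ref{prop:RSCNissubcatLogRec}, we have $\Log = \omega^{\CI}_{\log} i_{\RSC}$, and combined with $\Log \circ \omega_! = (\_)^{\log}$ from \cite[Cor. 2.6 (3)]{shuji}, factoring the map $F\otimes_{\LogRec}G\to (F,G)_{\RSC}$ through $\omega_!(F'\otCINissp\omegaCI G)$ becomes equivalent (after applying the equivalence $\omega_{\sharp}\colon \LogRec \iso \CIltr_{\dNis}$) to factoring the canonical map
\[\Log(F) \otimes_{\CIltr_{\dNis}} \Log(G) \;\longrightarrow\; (\omegaCI F \otCINissp \omegaCI G)^{\log}\]
through $(F' \otCINissp \omegaCI G)^{\log}$. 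A first key input is the identity $(F')^{\log} = \Log(F)$: this follows directly from the explicit formula for $\Log$ via reduced modulus pairs, together with the hypothesis $\omega_! F' = F$. Thus it suffices to construct the required map with $\Log(F)$ replaced by $(F')^{\log}$ on the left.

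Next, I would pass through the tensor-hom adjunction in $\Shltr$, reducing the construction to producing a morphism
\[(F')^{\log} \;\longrightarrow\; \uHom_{\Shltr}\bigl(\Log(G),(F' \otCINissp \omegaCI G)^{\log}\bigr).\]
The starting point is the unit of the closed monoidal structure on $\CItsp_{\Nis}$ (see \cite[\S 3]{ms}),
\[F' \longrightarrow \uHom_{\CItsp_{\Nis}}(\omegaCI G, F' \otCINissp \omegaCI G),\]
to which I apply $(\_)^{\log}$. Evaluating on $X \in \SmlSm(k)$ with associated reduced modulus pair $\sX$, the identification $h_0^{\bcube}(\sX \otimes \sY) = h_0^{\bcube}(\sX) \otimes h_0^{\bcube}(\sY)$ from Remark \ref{rmk;productofmodulus} converts the internal Hom in $\CItsp_{\Nis}$ into an internal Hom in $\Shltr$ after composing with the unit $\mathrm{id} \to \omegaCI\omega_!$ of the adjunction $\omega_! \dashv \omegaCI$. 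The full faithfulness of $\Log$ and of $\omegaCI$, together with $\Log\circ\omega_! = (\_)^{\log}$, let me rewrite the resulting Hom-set as the desired one.

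The main obstacle is the final compatibility check: one must verify that the composite map factors through the natural map
\[(F')^{\log} = \Log(F) \to \uHom_{\Shltr}(\Log(G),(\omegaCI F \otCINissp \omegaCI G)^{\log})\]
coming from $F' \hookrightarrow \omegaCI F$. This reduces to a diagram chase using the naturality of the unit $\mathrm{id} \to \omegaCI\omega_!$ applied to both $F'$ and $\omegaCI F$, plus the functoriality of $(-) \otCINissp \omegaCI G$, where one uses in an essential way that $\omega_! F' = \omega_!\omegaCI F = F$, so that after applying $\omega_!$ the two sides become canonically isomorphic even though before applying $\omega_!$ they differ. The step is purely formal but demands careful bookkeeping of the various units, counits, and identifications; I expect no conceptual difficulty beyond this, and the statement follows.
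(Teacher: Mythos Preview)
Your proposal is correct and follows essentially the same approach as the paper's proof: translate to the log side via $(F')^{\log}=\Log(F)$, reduce via the tensor--hom adjunction to constructing a map $(F')^{\log}\to \uHom_{\Shltr}(\Log(G),(F'\otCINissp\omegaCI G)^{\log})$, build this from the unit of the closed monoidal structure on $\CItsp_{\Nis}$ followed by $(\_)^{\log}$, the unit $\mathrm{id}\to\omegaCI\omega_!$, and the identification of internal Homs coming from Remark~\ref{rmk;productofmodulus}, and finally check compatibility using $\omega_!F'=F$. Your outline is in fact slightly cleaner in stating the adjunction target as $(F'\otCINissp\omegaCI G)^{\log}$ rather than $(\omegaCI F\otCINissp\omegaCI G)^{\log}$, which is what one actually needs before checking the factorization.
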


\begin{remark}
For $F\in Shv_\Nis$, we denote by $h^0_{\A^1}(F)$ the biggest $\A^1$-local subsheaf as defined in \cite[4.34]{RS}: for $U\in \Sm$,\[
h^0_{\A^1}(F)(U):= \Hom(h_0^{\A^1}(U),F).
\] 
On the other hand, for $U\hookrightarrow \ul{X}$ a Cartier compactification such that $\ul{X}$ is proper and smooth over $k$ and $\ul{X}-U$ is a simple normal crossing divisor, then for $X=(\ul{X},\partial X)\in \SmlSm(k)$ such that $\partial X$ is supported on $\ul{X}-U$, by \cite[Proposition 8.2.4]{bpo} we have that\[
h_0^{\A^1}(X)=\omega_{\sharp} h_0(X)
\]
Hence if $F\in \LogRec$, then\[
h^0_{\A^1}(F)(U)=\Hom(\omega_{\sharp}h_0(X),F) = \omegaCI_{\log}F(X).
\]
Here we underline that this does not depend on $X$, as long as $\underline{X}$ is \emph{proper}.

We conclude with this observation: for $X$ as above and $\sX\in \MCor$ the associated reduced modulus pair, by \cite[Corollary 4.36]{RS} if $F\in \RSC_{\Nis}$, we have that\[
\Hom(\omega_!h_0^{\bcube,sp}(\sX),F)=h^0_{\A^1}F=
\Hom(\omega_{\CI}^{\log} h_0^{\bcube}(X),F)
\]
This implies that\[
\omega_!h_0^{\bcube,sp}(\sX) \cong \omega_{\CI}^{\log} h_0^{\bcube}(X)
\]
In particular, by \cite[Corollary 2.6 (3)]{shuji}, we have that\[
\Log(\omega_!h_0^{\bcube,sp}(\sX))=h_0^{\bcube,sp}(\sX)^{\log}
\]
hence, by the fact that $\omega_{\CI}^{\log}$ is an equivalence on $\LogRec$, we have that\[
h_0^{\bcube,sp}(\sX)^{\log}\cong h_0(X) \cong \omega^*h_0^{\A^1}(U)
\]
Again, we stress that these isomorphisms do not depend on $X$ nor $\sX$, as long as $\underline{X}$ is \emph{proper}.
\end{remark}

\bibliographystyle{amsalpha}
\bibliography{connbib}

\end{document}